\newtheorem{theorem}{Theorem}[section]
\newtheorem{lemma}[theorem]{Lemma}
\newtheorem{proposition}[theorem]{Proposition}
\newtheorem{corollary}[theorem]{Corollary}
\newtheorem{conjecture}[theorem]{Conjecture}
\theoremstyle{definition}
\newtheorem{definition}[theorem]{Definition}
\newtheorem{remark}[theorem]{Remark}
\numberwithin{equation}{section}
\renewcommand{\phi}{\varphi}
\newcommand{\ep}{\varepsilon}
\newcommand{\A}{\operatorname{A}}
\renewcommand{\S}{\operatorname{S}}
\newcommand{\Aff}{\operatorname{Aff}}
\newcommand{\BF}{\operatorname{BF}}
\newcommand{\Coker}{\operatorname{Coker}}
\newcommand{\Ext}{\operatorname{Ext}}
\newcommand{\Homeo}{\operatorname{Homeo}}
\newcommand{\id}{\operatorname{id}}
\newcommand{\Ima}{\operatorname{Im}}
\newcommand{\Inf}{\operatorname{Inf}}
\newcommand{\Ker}{\operatorname{Ker}}
\newcommand{\supp}{\operatorname{supp}}
\newcommand{\sgn}{\operatorname{sgn}}
\newcommand{\Tor}{\operatorname{Tor}}
\newcommand{\Tot}{\operatorname{Tot}}
\newcommand{\ab}{\mathrm{ab}}
\newcommand{\K}{\mathcal{K}}
\newcommand{\G}{\mathcal{G}}
\renewcommand{\H}{\mathcal{H}}
\newcommand{\N}{\mathbb{N}}
\newcommand{\Z}{\mathbb{Z}}
\newcommand{\Q}{\mathbb{Q}}
\newcommand{\R}{\mathbb{R}}
\title{\'Etale groupoids arising from \\
products of shifts of finite type}
\author{Hiroki Matui \\
Graduate School of Science \\
Chiba University \\
Inage-ku, Chiba 263-8522, Japan}
\date{}
\begin{document}
\maketitle

\begin{abstract}
Two conjectures about homology groups, $K$-groups 
and topological full groups of minimal \'etale groupoids on Cantor sets 
are formulated. 
We verify these conjectures for many examples of \'etale groupoids 
including products of \'etale groupoids 
arising from one-sided shifts of finite type. 
Furthermore, 
we completely determine when these product groupoids are mutually isomorphic. 
Also, the abelianization of their topological full groups are computed. 
They are viewed as generalizations of the higher dimensional Thompson groups. 
\end{abstract}

\section{Introduction}

One can construct \'etale groupoids 
from various topological dynamical systems on Cantor sets. 
Minimal $\Z$-actions (i.e. minimal homeomorphisms) provide 
the most important example of dynamical systems on Cantor sets. 
The study of these dynamics was initiated 
by T. Giordano, I. F. Putnam and C. F. Skau \cite{GPS95crelle}, 
in which they classified minimal $\Z$-actions up to orbit equivalence. 
The associated \'etale groupoids and their topological full groups 
have been also studied extensively (\cite{GPS99Israel,M06IJM,JM13Ann}). 
Another fundamental example of dynamical systems on Cantor sets is 
the class of shifts of finite type (also called topological Markov shifts). 
These dynamical systems give 
symbolic representations of hyperbolic dynamical systems 
through Markov partitions. 
The \'etale groupoids of one-sided shifts of finite type (SFT) and 
the associated topological full groups were studied in \cite{M15crelle}. 

With an \'etale groupoid $\G$, 
we can associate the reduced groupoid $C^*$-algebra $C^*_r(\G)$. 
This construction has played 
an important role in the theory of $C^*$-algebras. 
When $\G$ is a groupoid of a minimal $\Z$-action on a Cantor set, 
the $C^*$-algebra $C^*_r(\G)$ is known to be an AT algebra with real rank zero 
(\cite{P90ETDS,GPS95crelle}). 
When $\G$ is a groupoid of a one-sided SFT, 
the $C^*$-algebra $C^*_r(\G)$ is the so-called Cuntz-Krieger $C^*$-algebra 
(\cite{CK80Invent}). 
In general, when $\G$ is minimal and essentially principal, 
$C^*_r(\G)$ is a simple $C^*$-algebra. 
It is a central problem to classify simple $C^*$-algebras by the $K$-groups. 
The study of the $K$-groups $K_i(C^*_r(\G))$ of the groupoid $C^*$-algebra 
has importance from this standpoint. 

In the present paper, we discuss relationship 
between the $K$-groups $K_i(C^*_r(\G)))$ and the homology groups $H_n(\G)$. 
More precisely, we conjecture that 
$\bigoplus_nH_{2n+i}(\G)$ is isomorphic to $K_i(C^*_r(\G)))$ 
for any minimal essentially principal \'etale groupoid $\G$ 
whose unit space is a Cantor set (Conjecture \ref{HK}), 
and verify it for many examples. 
Especially, the conjecture is true 
for minimal $\Z$-actions, one-sided SFT and any products of them 
(Theorem \ref{HKforproduct}). 

The other conjecture states that the sequence 
\[
\begin{CD}
H_0(\G)\otimes\Z_2@>>>[[\G]]_\ab@>>>H_1(\G)@>>>0
\end{CD}
\]
is exact for any minimal essentially principal \'etale groupoid $\G$ 
whose unit space is a Cantor set (Conjecture \ref{AH}), 
where $[[\G]]_\ab$ is the abelianization of 
the topological full group $[[\G]]$. 
Topological full groups provide many interesting examples of discrete groups 
from the viewpoint of combinatorial and geometric group theory. 
For minimal $\Z$-actions, 
various properties of the topological full groups were studied 
in \cite{GPS99Israel}. 
Among others, it was shown that 
the isomorphism class of topological full groups is a complete invariant 
for flip conjugacy. 
Later, in \cite{M06IJM}, it was shown that 
the commutator subgroup $D([[\G]])$ is simple 
and that $D([[\G]])$ is finitely generated if and only if 
the $\Z$-action is expansive. 
K. Juschenko and N. Monod \cite{JM13Ann} proved that $[[\G]]$ is amenable. 
By these results, we obtained infinitely many countable infinite groups 
which are simple, finitely generated and amenable. 
When $\G$ is a groupoid of one-sided SFT, in \cite{M15crelle}, 
it was proved that $[[\G]]$ is of type F$_\infty$ 
(in particular, finitely presented) 
and that $D([[\G]])$ is simple. 
They are regarded as generalizations of the Higman-Thompson groups 
(see \cite{Ne04JOP,M15crelle}). 
Also, K. Matsumoto and the author \cite{MM14Kyoto} completely determined 
when these $[[\G]]$ are mutually isomorphic. 
In this paper, we will prove that 
Conjecture \ref{AH} holds for many \'etale groupoids, 
which include almost finite groupoids (Theorem \ref{almstfin>AH}) 
and purely infinite groupoids with property TR (Theorem \ref{pi>AH}). 

The latter half of the paper is devoted to 
the study of product groupoids $\G$ of finitely many SFT groupoids. 
Topological full groups $[[\G]]$ of these groupoids are thought of 
as generalizations of the higher dimensional Thompson groups $nV_{k,r}$ 
(see Remark \ref{highdimThomp}). 
First, we find a necessary and sufficient condition 
so that two such groupoids become isomorphic to each other 
(Theorem \ref{prodSFTclassify}). 
This is an extension of the classification result of 
W. Dicks and C. Mart\'inez-P\'erez \cite{DMP14Israel}. 
Furthermore, 
we compute the abelianization $[[\G]]_\ab$ of the topological full group 
(Theorem \ref{prodSFTAbel}). 
As a special case, the abelianization of $nV_{k,r}$ is given explicitly 
(Theorem \ref{highdimThompAbel}).

\section{Preliminaries}

\subsection{\'Etale groupoids}

The cardinality of a set $A$ is written $\#A$ and 
the characteristic function of $A$ is written $1_A$. 
The finite cyclic group of order $n$ is denoted 
by $\Z_n=\{\bar{r}\mid r=1,2,\dots,n\}$. 
We say that a subset of a topological space is clopen 
if it is both closed and open. 
A topological space is said to be totally disconnected 
if its topology is generated by clopen subsets. 
By a Cantor set, 
we mean a compact, metrizable, totally disconnected space 
with no isolated points. 
It is known that any two such spaces are homeomorphic. 
The homeomorphism group of a topological space $X$ is written $\Homeo(X)$. 
The commutator subgroup of a group $\Gamma$ is denoted by $D(\Gamma)$. 
We let $\Gamma_\ab$ denote the abelianization $\Gamma/D(\Gamma)$. 

In this article, by an \'etale groupoid 
we mean a second countable locally compact Hausdorff groupoid 
such that the range map is a local homeomorphism. 
We refer the reader to \cite{Rtext,R08Irish} 
for background material on \'etale groupoids. 
For an \'etale groupoid $\G$, 
we let $\G^{(0)}$ denote the unit space and 
let $s$ and $r$ denote the source and range maps. 
For $x\in\G^{(0)}$, 
$\G(x)=r(\G x)$ is called the $\G$-orbit of $x$. 
When every $\G$-orbit is dense in $\G^{(0)}$, 
$\G$ is said to be minimal. 
For a subset $Y\subset\G^{(0)}$, 
the reduction of $\G$ to $Y$ is $r^{-1}(Y)\cap s^{-1}(Y)$ and 
denoted by $\G|Y$. 
If $Y$ is clopen, then 
the reduction $\G|Y$ is an \'etale subgroupoid of $\G$ in an obvious way. 
For $x\in\G^{(0)}$, 
we write $\G_x=r^{-1}(x)\cap s^{-1}(x)$ and call it the isotropy group of $x$. 
The isotropy bundle of $\G$ is 
$\G'=\{g\in\G\mid r(g)=s(g)\}=\bigcup_{x\in\G^{(0)}}\G_x$. 
We say that $\G$ is principal if $\G'=\G^{(0)}$. 
When the interior of $\G'$ is $\G^{(0)}$, 
we say that $\G$ is essentially principal. 
The set of interior points $(\G')^\circ$ of $\G'$ forms 
an \'etale subgroupoid. 
The quotient of $\G$ by the equivalence relation 
\[
g\sim h\iff gh^{-1}\in(\G')^\circ
\]
becomes an essentially principal \'etale groupoid. 
We call it the essentially principal part of $\G$. 

A subset $U\subset\G$ is called a $\G$-set if $r|U,s|U$ are injective. 
Any open $\G$-set $U$ induces the homeomorphism 
$(r|U)\circ(s|U)^{-1}$ from $s(U)$ to $r(U)$. 
We write $\theta(U)=(r|U)\circ(s|U)^{-1}$. 
When $U,V$ are $\G$-sets, 
\[
U^{-1}=\{g\in\G\mid g^{-1}\in U\}
\]
and 
\[
UV=\{gg'\in\G\mid g\in U,\ g'\in V,\ s(g)=r(g')\}
\]
are also $\G$-sets. 
A probability measure $\mu$ on $\G^{(0)}$ is said to be $\G$-invariant 
if $\mu(r(U))=\mu(s(U))$ holds for every open $\G$-set $U$. 
The set of all $\G$-invariant probability measures is denoted by $M(\G)$. 

Let $\phi:\Gamma\curvearrowright X$ be 
an action of a countable discrete group $\Gamma$ 
on a locally compact Hausdorff space $X$ by homeomorphisms. 
We let $\G_\phi=\Gamma\times X$ and define the following groupoid structure: 
$(\gamma,x)$ and $(\gamma',x')$ are composable 
if and only if $x=\phi^{\gamma'}(x')$, 
$(\gamma,\phi^{\gamma'}(x'))\cdot(\gamma',x')=(\gamma\gamma',x')$ and 
$(\gamma,x)^{-1}=(\gamma^{-1},\phi^\gamma(x))$. 
Then $\G_\phi$ is an \'etale groupoid and 
called the transformation groupoid 
arising from $\phi:\Gamma\curvearrowright X$. 

We would like to recall 
the notion of topological full groups for \'etale groupoids. 

\begin{definition}[{\cite[Definition 2.3]{M12PLMS}}]\label{defoftfg}
Let $\G$ be an essentially principal \'etale groupoid 
whose unit space $\G^{(0)}$ is a Cantor set. 
The set of all $\alpha\in\Homeo(\G^{(0)})$ for which 
there exists a compact open $\G$-set $U$ 
satisfying $\alpha=\theta(U)$ 
is called the topological full group of $\G$ and denoted by $[[\G]]$. 
\end{definition}

For $\alpha\in[[\G]]$ the compact open $\G$-set $U$ as above uniquely exists, 
because $\G$ is essentially principal. 
Obviously $[[\G]]$ is a subgroup of $\Homeo(\G^{(0)})$. 
Since $\G$ is second countable, it has countably many compact open subsets, 
and so $[[\G]]$ is at most countable. 

In \cite[Section 7]{M12PLMS}, 
we introduced the index map $I:[[\G]]\to H_1(\G)$, 
where $H_1(\G)$ is the homology group of $\G$ 
(see Section 2.2 or \cite[Section 3]{M12PLMS}). 
For $\alpha\in[[\G]]$, 
let $U\subset\G$ be the compact open $\G$-set 
such that $\alpha=\theta(U)=(r|U)\circ(s|U)^{-1}$. 
Then the characteristic function $1_U\in C_c(\G,\Z)$ is a cycle, 
and $I(\alpha)$ is the equivalence class of $1_U$ in $H_1(\G)$. 
The index map $I$ is a homomorphism. 
We let $[[\G]]_0$ denote the kernel of the index map $I$. 
Evidently $D([[\G]])$ is contained in $[[\G]]_0$. 

The following theorem says that 
the groups $[[\G]]$, $[[\G]]_0$ and $D([[\G]])$ `remember' 
the \'etale groupoid $\G$. 

\begin{theorem}[{\cite[Theorem 3.10]{M15crelle}}]
For $i=1,2$, let $\G_i$ be an essentially principal \'etale groupoid 
whose unit space is a Cantor set 
and suppose that $\G_i$ is minimal. 
The following conditions are equivalent. 
\begin{enumerate}
\item $\G_1$ and $\G_2$ are isomorphic as \'etale groupoids. 
\item $[[\G_1]]$ and $[[\G_2]]$ are isomorphic as discrete groups. 
\item $[[\G_1]]_0$ and $[[\G_2]]_0$ are isomorphic as discrete groups. 
\item $D([[\G_1]])$ and $D([[\G_2]])$ are isomorphic as discrete groups. 
\end{enumerate}
\end{theorem}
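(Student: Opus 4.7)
The plan is to prove the implications (1)$\Rightarrow$(2)$\Rightarrow$(3)$\Rightarrow$(1) and (1)$\Rightarrow$(4)$\Rightarrow$(1). The forward implications are essentially trivial: a groupoid isomorphism $\G_1\cong\G_2$ induces an isomorphism $[[\G_1]]\cong[[\G_2]]$ of discrete groups, which must carry $[[\G_1]]_0$ onto $[[\G_2]]_0$ (as $I:[[\G]]\to H_1(\G)$ is intrinsically defined) and $D([[\G_1]])$ onto $D([[\G_2]])$ (as commutator subgroups are preserved by any group isomorphism). So everything reduces to recovering $\G$ from the abstract group $D([[\G]])$, because the same argument will apply a fortiori to the larger groups $[[\G]]_0$ and $[[\G]]$.

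For the reconstruction, my approach is a two-step strategy: first spatially realize any abstract isomorphism, then promote the resulting homeomorphism to a groupoid isomorphism.

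\textbf{Step 1 (Spatial realization).} I would show that the canonical action of $D([[\G]])$ on the Cantor set $\G^{(0)}$ is faithful and satisfies the hypotheses of a Rubin-type reconstruction theorem for homeomorphism groups on locally compact Hausdorff spaces with no isolated points. Faithfulness follows since $\G$ is essentially principal, so that a compact open $\G$-set $U$ acting trivially on $\G^{(0)}$ must lie in the interior of the isotropy, hence in $\G^{(0)}$. Local density (the key Rubin hypothesis) is proved using minimality of $\G$ together with the abundance of involutions in $D([[\G]])$ supported in arbitrarily small nonempty clopen subsets of $\G^{(0)}$; these involutions are produced by swapping pairs of compact open bisections whose source and range are disjoint clopen subsets of a prescribed clopen set, a construction already used in \cite{M06IJM,M15crelle}. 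With these in hand, Rubin's theorem gives, for any isomorphism $\Phi:D([[\G_1]])\to D([[\G_2]])$, a homeomorphism $\tau:\G_1^{(0)}\to\G_2^{(0)}$ with $\Phi(\alpha)=\tau\circ\alpha\circ\tau^{-1}$ for all $\alpha\in D([[\G_1]])$.

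\textbf{Step 2 (Groupoid reconstruction).} I would then show that $\tau$ implements a groupoid isomorphism $\G_1\cong\G_2$. The idea is that $\G_i$ can be recovered from how $D([[\G_i]])$ acts near each point: an element $g\in\G_i$ with $s(g)=x$, $r(g)=y$ corresponds to the germ at $x$ of any $\alpha\in[[\G_i]]$ coming from a compact open $\G_i$-set containing $g$, and two such $\alpha$'s define the same $g$ iff they have the same germ at $x$. Because conjugation by $\tau$ sends $D([[\G_1]])$ onto $D([[\G_2]])$ and every element of $[[\G_i]]$ is a product of an involution in $D([[\G_i]])$ with something suitably controlled (more concretely, $D([[\G]])$ already contains enough involutions to detect all germs arising from compact open $\G$-sets), the map $\tau$ respects the germ equivalence relations that define $\G_1$ and $\G_2$. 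Identifying germs with elements of the respective groupoids produces the desired isomorphism $\G_1\to\G_2$.

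The main obstacle is Step 1: verifying that $D([[\G]])$ is large enough for Rubin's theorem to apply, and in particular producing sufficiently many commutators supported in arbitrarily small clopen sets so that local density holds. A secondary difficulty is Step 2, where one must argue carefully that germs of elements in $D([[\G]])$ (rather than all of $[[\G]]$) already exhaust the germs needed to reconstruct $\G$; here one uses that any compact open $\G$-set can, after multiplying by a suitable involution, be realized inside the commutator subgroup, so that no information is lost by passing to $D([[\G]])$.
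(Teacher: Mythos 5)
The paper does not prove this theorem; it is quoted verbatim from \cite{M15crelle} (Theorem 3.10 there), and your two-step strategy --- spatial realization of an abstract isomorphism via a Rubin-type reconstruction theorem, using small-support involutions built from compact open bisections with disjoint source and range to verify local density, followed by recovery of the essentially principal groupoid from the germs of the full group --- is essentially the argument given in that reference. Your reduction of all the backward implications to the reconstruction step, applied directly to each of $D([[\G]])$, $[[\G]]_0$ and $[[\G]]$ (all of which contain the needed involutions), is sound.
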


Recently, V. V. Nekrashevych \cite{Ne15} introduced two normal subgroups 
$\A(\G)\subset\S(\G)\subset[[\G]]$. 
Roughly speaking, 
$\S(\G)$ is the subgroup generated by all elements of order two, 
and $\A(\G)$ is the subgroup generated by all elements of order three 
(see \cite{Ne15} for the precise definitions). 
They are analogs of the symmetric and alternating groups. 
He proved that 
the same statement as the theorem above is true for $\A(\G)$ and $\S(\G)$. 
Furthermore, he proved that $\A(\G)$ is simple if $\G$ is minimal, 
and that $\A(\G)$ is finitely generated if $\G$ is expansive. 
When $\G$ is minimal and almost finite (or purely infinite), 
we can verify $\A(\G)=D([[\G]])$. 
When $\G$ is principal and almost finite, 
we can verify $\S(\G)=[[\G]]_0$. 

For an \'etale groupoid $\G$, 
we denote the reduced groupoid $C^*$-algebra of $\G$ by $C^*_r(\G)$ and 
identify $C_0(\G^{(0)})$ with a subalgebra of $C^*_r(\G)$. 
J. Renault's theorem \cite[Theorem 5.9]{R08Irish} tells us that 
two essentially principal \'etale groupoids $\G_1$ and $\G_2$ are isomorphic 
if and only if there exists an isomorphism $\phi:C^*_r(\G_1)\to C^*_r(\G_2)$ 
such that $\phi(C_0(\G_1^{(0)}))=C_0(\G_2^{(0)})$ 
(see also \cite[Theorem 5.1]{M12PLMS}).

\subsection{Homology groups and the K\"unneth theorem}

The homology groups $H_n(\G)$ of an \'etale groupoid $\G$ were first 
introduced and studied by M. Crainic and I. Moerdijk in \cite{CM00crelle}. 
In the case that the unit space $\G^{(0)}$ is a Cantor set, 
we investigated connections 
between the homology groups and dynamical properties of $\G$ 
in \cite{M12PLMS, M15crelle}. 
Let us recall the definition of $H_n(\G)$. 

Let $A$ be a topological abelian group. 
For a locally compact Hausdorff space $X$, we denote by $C_c(X,A)$ 
the set of $A$-valued continuous functions with compact support. 
When $X$ is compact, we simply write $C(X,A)$. 
With pointwise addition, $C_c(X,A)$ is an abelian group. 
Let $\pi:X\to Y$ be a local homeomorphism 
between locally compact Hausdorff spaces. 
For $f\in C_c(X,A)$, we define a map $\pi_*(f):Y\to A$ by 
\[
\pi_*(f)(y)=\sum_{\pi(x)=y}f(x). 
\]
It is not so hard to see that $\pi_*(f)$ belongs to $C_c(Y,A)$ and 
that $\pi_*$ is a homomorphism from $C_c(X,A)$ to $C_c(Y,A)$. 
Besides, if $\pi':Y\to Z$ is another local homeomorphism to 
a locally compact Hausdorff space $Z$, then 
one can check $(\pi'\circ\pi)_*=\pi'_*\circ\pi_*$ in a direct way. 
Thus, $C_c(\cdot,A)$ is a covariant functor 
from the category of locally compact Hausdorff spaces 
with local homeomorphisms 
to the category of abelian groups with homomorphisms. 

Let $\G$ be an \'etale groupoid. 
For $n\in\N$, we write $\G^{(n)}$ 
for the space of composable strings of $n$ elements in $\G$, that is, 
\[
\G^{(n)}=\{(g_1,g_2,\dots,g_n)\in\G^n\mid
s(g_i)=r(g_{i+1})\text{ for all }i=1,2,\dots,n{-}1\}. 
\]
For $i=0,1,\dots,n$, 
we let $d_i:G^{(n)}\to G^{(n-1)}$ be a map defined by 
\[
d_i(g_1,g_2,\dots,g_n)=\begin{cases}
(g_2,g_3,\dots,g_n) & i=0 \\
(g_1,\dots,g_ig_{i+1},\dots,g_n) & 1\leq i\leq n{-}1 \\
(g_1,g_2,\dots,g_{n-1}) & i=n. 
\end{cases}
\]
When $n=1$, we let $d_0,d_1:\G^{(1)}\to\G^{(0)}$ be 
the source map and the range map, respectively. 
For $j=0,1,\dots,n$, 
we let $s_j:G^{(n)}\to G^{(n+1)}$ be a map defined by 
\[
s_j(g_1,g_2,\dots,g_n)=\begin{cases}
(r(g_1),g_1,g_2,\dots,g_n) & j=0 \\
(g_1,\dots,g_j,r(g_{j+1}),g_{j+1},\dots,g_n) & 1\leq j\leq n{-}1 \\
(g_1,g_2,\dots,g_n,s(g_n)) & j=n. 
\end{cases}
\]
Clearly the maps $d_i$ and $s_j$ are local homeomorphisms. 
The spaces $(\G^{(n)})_{n\geq0}$ 
together with the face maps $d_i$ and the degeneracy maps $s_j$ 
form a simplicial space, 
which is called the nerve of $\G$ 
(see \cite[Example I.1.4]{GJtext} for instance). 
Applying the covariant functor $C_c(\cdot,A)$ to the nerve of $\G$, 
we obtain the simplicial abelian group $(C_c(\G^{(n)},A))_{n\geq0}$. 
Define the homomorphisms $\partial_n:C_c(\G^{(n)},A)\to C_c(\G^{(n-1)},A)$ 
by 
\[
\partial_n=\sum_{i=0}^n(-1)^id_{i*}. 
\]
The abelian groups $C_c(\G^{(n)},A)$ 
together with the boundary operators $\partial_n$ form a chain complex, 
which is called the Moore complex of 
the simplicial abelian group $(C_c(\G^{(n)},A))_n$ 
(see \cite[Chapter III.2]{GJtext} for instance). 

\begin{definition}
[{\cite[Section 3.1]{CM00crelle},\cite[Definition 3.1]{M12PLMS}}]
\label{homology}
We let $H_n(\G,A)$ be the homology groups of the Moore complex above, 
i.e. $H_n(\G,A)=\Ker\partial_n/\Ima\partial_{n+1}$, 
and call them the homology groups of $\G$ with constant coefficients $A$. 
When $A=\Z$, we simply write $H_n(\G)=H_n(\G,\Z)$. 
In addition, we define 
\[
H_0(\G)^+=\{[f]\in H_0(\G)\mid f(x)\geq0\text{ for all }x\in\G^{(0)}\}, 
\]
where $[f]$ denotes the equivalence class of $f\in C_c(\G^{(0)},\Z)$. 
\end{definition}

When $\G=\G_\phi$ is the transformation groupoid 
arising from a group action $\phi:\Gamma\curvearrowright X$ 
on a Cantor set $X$, 
$H_n(\G)$ is naturally isomorphic to the group homology $H_n(\Gamma,C(X,\Z))$. 

For the homology groups $H_n(\G,A)$, the following K\"unneth theorem holds. 

\begin{theorem}\label{Kunneth}
Let $\G$ and $\H$ be \'etale groupoids. 
For any $n\geq0$, there exists a natural short exact sequence 
\[
0\longrightarrow\bigoplus_{i+j=n}H_i(\G)\otimes H_j(\H)
\longrightarrow H_n(\G\times\H)
\longrightarrow\bigoplus_{i+j=n-1}\Tor(H_i(\G),H_j(\H))\longrightarrow0. 
\]
Furthermore these sequences split (but not canonically). 
\end{theorem}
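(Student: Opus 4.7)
The plan is to reduce the theorem to the algebraic K\"unneth formula via the Eilenberg--Zilber theorem, after identifying the simplicial abelian group that computes $H_*(\G\times\H)$ with the diagonal of a bisimplicial construction.

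First, I would observe that $(\G\times\H)^{(n)}$ is naturally homeomorphic to $\G^{(n)}\times\H^{(n)}$, because the composability condition for strings in $\G\times\H$ decouples into the composability conditions for the $\G$- and $\H$-components. Under this identification, each face map $d_i$ and degeneracy map $s_j$ of the nerve of $\G\times\H$ becomes the product of the corresponding maps for the nerves of $\G$ and $\H$.

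Second, since the unit spaces are totally disconnected, each $\G^{(n)}$ and $\H^{(n)}$ is a totally disconnected locally compact Hausdorff space, so every compact open subset of a product is a finite disjoint union of ``rectangles'' of compact open sets. This yields a natural isomorphism of abelian groups
\[
C_c(\G^{(n)},\Z)\otimes_\Z C_c(\H^{(n)},\Z)\;\stackrel{\cong}{\longrightarrow}\;C_c(\G^{(n)}\times\H^{(n)},\Z),\qquad f\otimes g\mapsto\bigl((x,y)\mapsto f(x)g(y)\bigr).
\]
Combined with the first step, and using the routine identity $(\pi_1\times\pi_2)_*(f_1\otimes f_2)=(\pi_1)_*f_1\otimes(\pi_2)_*f_2$ for local homeomorphisms $\pi_1,\pi_2$, this shows that the simplicial abelian group computing $H_*(\G\times\H)$ is the diagonal of the bisimplicial abelian group
\[
(p,q)\;\longmapsto\;C_c(\G^{(p)},\Z)\otimes_\Z C_c(\H^{(q)},\Z).
\]

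Third, I would invoke the Eilenberg--Zilber theorem: the Moore complex of the diagonal of a bisimplicial abelian group is naturally chain homotopy equivalent, via the Alexander--Whitney and shuffle maps, to the total complex of the associated double complex. Hence $H_n(\G\times\H)$ is the $n$th homology of $\Tot(C_c(\G^{(\bullet)},\Z)\otimes C_c(\H^{(\bullet)},\Z))$. Since $C_c(X,\Z)$ is torsion-free, hence flat over $\Z$ (in fact free abelian, as it is the directed union of its subgroups $\Z^{\mathcal{P}}$ indexed by finite compact open partitions $\mathcal{P}$, each refinement step producing a free cokernel), the algebraic K\"unneth formula for tensor products of chain complexes of flat modules applies and yields the natural short exact sequence stated in the theorem, together with its (non-canonical) splitting.

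The main obstacle I anticipate is the bookkeeping in the second step: verifying carefully that the tensor product isomorphism intertwines the simplicial face and degeneracy operators on both sides, so that the Moore complex of $(C_c((\G\times\H)^{(n)},\Z))_n$ is genuinely identified with the diagonal Moore complex of the bisimplicial tensor product. Once this compatibility is in hand, the remainder is the standard Eilenberg--Zilber and K\"unneth machinery, and both the naturality of the sequence and the existence of a splitting are inherited directly from the algebraic K\"unneth formula.
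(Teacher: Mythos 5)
Your proposal follows essentially the same route as the paper's proof: identify $(C_c((\G\times\H)^{(n)},\Z))_n$ with the diagonal of the bisimplicial group $C_c(\G^{(p)},\Z)\otimes C_c(\H^{(q)},\Z)$, apply the (generalized) Eilenberg--Zilber theorem to pass to the total complex, and conclude with the algebraic K\"unneth formula for complexes of flat $\Z$-modules. The only difference is that you spell out the isomorphism $C_c(X,\Z)\otimes C_c(Y,\Z)\cong C_c(X\times Y,\Z)$ and the freeness of $C_c(X,\Z)$ in the totally disconnected setting, details the paper leaves implicit.
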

\begin{proof}
The family of abelian groups $C_c(\G^{(m)},\Z)\otimes C_c(\H^{(n)},\Z)$ 
form a bisimplicial abelian group $\mathcal{A}$ 
(see \cite[Chapter IV.2.2]{GJtext} for instance). 
We will apply the generalized Eilenberg-Zilber theorem 
\cite[Theorem IV.2.4]{GJtext} 
to this bisimplicial abelian group $\mathcal{A}$. 
Its diagonal simplicial abelian group $d(\mathcal{A})$ is 
canonically identified 
with the simplicial abelian group $(C_c((\G\times\H)^{(n)},\Z))_n$, 
because 
\[
C_c(\G^{(n)},\Z)\otimes C_c(\H^{(n)},\Z)
\cong C_c(\G^{(n)}\times\H^{(n)},\Z)\cong C_c((\G\times\H)^{(n)},\Z). 
\]
On the other hand, 
we consider the Moore bicomplex 
for the bisimplicial abelian group $\mathcal{A}$. 
Evidently its total complex $\Tot(\mathcal{A})$ is equal to 
the tensor product of the Moore complex of $(C_c(\G^{(m)},\Z))_m$ and 
the Moore complex of $(C_c(\H^{(n)},\Z))_n$. 
It follows from \cite[Theorem IV.2.4]{GJtext} that 
the Moore complex of $d(\mathcal{A})$ is 
chain homotopy equivalent to $\Tot(\mathcal{A})$. 
Hence the chain complex $(C_c((\G\times\H)^{(n)},\Z))_n$ is 
chain homotopy equivalent to 
the tensor product of $(C_c(\G^{(m)},\Z))_m$ and $(C_c(\H^{(n)},\Z))_n$. 
Then, 
the usual K\"unneth formula applies and yields the desired conclusion. 
\end{proof}

\begin{corollary}\label{H1ofprod}
Let $\G=\G_1\times \G_2\times\dots\times\G_m$ be a product groupoid of 
\'etale groupoids $\G_1,\G_2,\dots,\G_m$. 
We have 
\[
H_0(\G)\cong H_0(\G_1)\otimes H_0(\G_2)\otimes\dots\otimes H_0(\G_m)
\]
and 
\begin{align*}
H_1(\G)&\cong\bigoplus_{i_1+i_2+\dots+i_m=1}
H_{i_1}(\G_1)\otimes H_{i_2}(\G_2)\otimes\dots\otimes H_{i_m}(\G_m)\\
&\quad\oplus\bigoplus_{p=1}^{m-1}
H_0(\G_1)\otimes\dots\otimes H_0(\G_{p-1})
\otimes\Tor(H_0(\G_p),H_0(\G_{p+1}\times\dots\times \G_m)). 
\end{align*}
\end{corollary}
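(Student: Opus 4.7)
The plan is to derive both isomorphisms by induction on $m$, using Theorem~\ref{Kunneth} at each step applied to the decomposition $\G = \G_1 \times (\G_2 \times \cdots \times \G_m)$.

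The statement for $H_0$ is immediate from the $n=0$ case of Theorem~\ref{Kunneth}: the $\Tor$ term there involves $H_i(\G) \otimes H_j(\H)$ with $i+j = -1$, which is empty, so $H_0(\G_1 \times \G_2) \cong H_0(\G_1) \otimes H_0(\G_2)$ canonically. Iterating this identification (and using associativity of the tensor product of abelian groups) yields $H_0(\G) \cong H_0(\G_1) \otimes \cdots \otimes H_0(\G_m)$. The base case $m=2$ of the $H_1$ formula is also direct: the $n=1$ case of Theorem~\ref{Kunneth} gives a (split) short exact sequence
\[
0 \longrightarrow H_0(\G_1)\otimes H_1(\G_2) \oplus H_1(\G_1)\otimes H_0(\G_2)
\longrightarrow H_1(\G_1\times\G_2)
\longrightarrow \Tor(H_0(\G_1), H_0(\G_2)) \longrightarrow 0,
\]
which, once split, produces exactly the right-hand side of the corollary with $p=1$.

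For the inductive step, assume the formula for $m-1$ factors and set $\H = \G_2 \times \cdots \times \G_m$. Applying the $n=1$ Künneth sequence to $\G_1 \times \H$ and choosing a splitting gives
\[
H_1(\G_1 \times \H) \cong
\bigl(H_0(\G_1)\otimes H_1(\H)\bigr)
\oplus \bigl(H_1(\G_1)\otimes H_0(\H)\bigr)
\oplus \Tor\bigl(H_0(\G_1), H_0(\H)\bigr).
\]
Now I would substitute the inductive description of $H_0(\H)$ and $H_1(\H)$, then distribute the tensor product over the direct sums. The summand $H_1(\G_1)\otimes H_0(\H)$ becomes the $i_1=1$ term in the multi-index sum; the summand $H_0(\G_1)\otimes H_1(\H)$, after applying the inductive hypothesis, breaks into the multi-index terms with some $i_k = 1$ for $k \geq 2$ together with the sum $\bigoplus_{p=2}^{m-1} H_0(\G_1) \otimes \cdots \otimes H_0(\G_{p-1}) \otimes \Tor(H_0(\G_p), H_0(\G_{p+1} \times \cdots \times \G_m))$; and the final $\Tor$ summand is precisely the missing $p=1$ term. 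Reassembling these pieces yields the stated formula.

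There is no genuine obstacle here: the only thing to verify carefully is that distributing $H_0(\G_1)\otimes(-)$ across the inductive expression for $H_1(\H)$ reindexes correctly, that the $\Tor$ term produced by Künneth is the $p=1$ case of the outer sum, and that the tensor factors $H_0(\G_2)\otimes\cdots\otimes H_0(\G_{p-1})$ combine associatively with the $H_0(\G_1)$ pulled out in front. The mild subtlety worth flagging is that Theorem~\ref{Kunneth} asserts only a non-canonical splitting, so the isomorphism in the corollary is likewise non-canonical; this is harmless for the statement, which is phrased purely as an isomorphism of abelian groups.
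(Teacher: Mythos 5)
Your proposal is correct and is essentially the paper's own argument: the paper's proof consists of the single line ``repeated use of the K\"unneth theorem yields the conclusions,'' and your induction on $m$ via the decomposition $\G_1\times(\G_2\times\dots\times\G_m)$, with the bookkeeping of how the three K\"unneth summands reassemble into the stated formula, is exactly the intended expansion of that line. (The paper additionally remarks that the $H_0$ statement can be obtained directly from the definition without K\"unneth, but deriving it from the $n=0$ case as you do is equally valid.)
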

\begin{proof}
Repeated use of the K\"unneth theorem yields the conclusions. 
We remark that (1) can be shown directly from the definition 
without appealing to the K\"unneth theorem. 
\end{proof}

\subsection{Conjectures}

In this subsection, we formulate two conjectures, 
namely the HK conjecture (Conjecture \ref{HK}) and 
the AH conjecture (Conjecture \ref{AH}). 
The first conjecture says that 
the homology groups $H_n(\G)$ agree with 
the $K$-groups of the $C^*$-algebra $C^*_r(\G)$. 

\begin{conjecture}[HK conjecture]\label{HK}
Let $\G$ be an essentially principal minimal \'etale groupoid 
whose unit space $\G^{(0)}$ is a Cantor set. 
Then we have 
\[
\bigoplus_{i=0}^\infty H_{2i}(\G)\cong K_0(C^*_r(\G))
\]
and 
\[
\bigoplus_{i=0}^\infty H_{2i+1}(\G)\cong K_1(C^*_r(\G)). 
\]
\end{conjecture}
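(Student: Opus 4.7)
The plan is to attack this via a spectral sequence of the form $E^2_{p,q} = H_p(\G;\mathcal{K}_q) \Rightarrow K_{p+q}(C^*_r(\G))$ relating groupoid homology with a suitable $K$-theory coefficient system to the $K$-theory of the reduced $C^*$-algebra, and then to argue degeneration and triviality of the extensions. Since $\G$ is ample (the unit space is totally disconnected), the natural coefficient system should be constant with $\mathcal{K}_q = \Z$ for $q$ even and $0$ for $q$ odd, so that the $E^2$-page collapses to $\bigoplus_i H_{p-2i}(\G)$ concentrated in a single parity, yielding the desired direct-sum description. The construction I would attempt imitates the Baum--Connes assembly map adapted to ample groupoids, or alternatively a Hochschild--Serre-type device built from a filtration of $\G$ by open subgroupoids.

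Before tackling the general case, I would check the conjecture on a base class and then propagate. The base cases are: (i) AF groupoids, where only $H_0$ survives and the isomorphism $H_0(\G)\cong K_0(C^*_r(\G))$ is classical via Bratteli diagrams; (ii) transformation groupoids of minimal $\Z$-actions on a Cantor set, where $H_n(\G_\phi)=H_n(\Z,C(X,\Z))$ is two-term and matches $K_*$ of the crossed product through Pimsner--Voiculescu; (iii) SFT groupoids, where both sides can be read off from the defining matrix. Closure under products follows by combining Theorem~\ref{Kunneth} with the topological K\"unneth theorem for $C^*$-algebras, provided one checks that the isomorphisms of the base cases are natural enough to intertwine the K\"unneth sequences (and that the splittings can be chosen compatibly). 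Closure under Morita equivalence is automatic on the $C^*$-side and would have to be established for groupoid homology. Finally, one wants closure under short exact sequences of groupoids, using compatible Hochschild--Serre spectral sequences on both sides.

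For an arbitrary minimal essentially principal ample groupoid $\G$, the strategy is to exhibit $\G$ as an inductive limit (in a suitable sense) of compact open subgroupoids or of groupoids built from the base class, and to use continuity of both $H_*$ and $K_*$. For the almost finite case this continuity argument has a concrete chance, because one can approximate by elementary subgroupoids; in the purely infinite setting one would instead invoke Kirchberg--Phillips-type rigidity to reduce to computing $K_*$ algebraically from $H_*$. The main obstacle is the actual construction of the spectral sequence in adequate generality: even for transformation groupoids of general countable group actions there is no universally available tool of the required strength, and even granting the $E^2$-page, the collapse and the splitting of the resulting extension $0\to\bigoplus_i H_{2i+\epsilon}(\G)\to K_\epsilon(C^*_r(\G))\to 0$ are substantial additional problems. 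The fact that no uniform mechanism is visible across the AF, transformation, and SFT cases is exactly what forces the statement to be formulated as a conjecture rather than a theorem, and is why in practice one verifies it class by class (products of such, as alluded to in Theorem~\ref{HKforproduct}) pending a more conceptual argument.
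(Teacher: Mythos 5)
The statement you are asked to prove is Conjecture~\ref{HK}; the paper does not prove it, and neither do you. Your text is a research programme, not a proof: its load-bearing object, a spectral sequence $E^2_{p,q}=H_p(\G;\mathcal{K}_q)\Rightarrow K_{p+q}(C^*_r(\G))$ for arbitrary minimal essentially principal ample groupoids, is never constructed, and you concede as much in your final sentences. Even if such a spectral sequence were available, two further substantial claims are asserted without argument: that it degenerates at $E^2$ (no differentials), and that the resulting filtration of $K_\ep(C^*_r(\G))$ splits into the direct sum $\bigoplus_i H_{2i+\ep}(\G)$. Neither is automatic, and there is no mechanism in your outline that would deliver them. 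Likewise, ``exhibit $\G$ as an inductive limit of groupoids built from the base class'' is not something one can do for a general minimal ample groupoid, and ``invoke Kirchberg--Phillips-type rigidity'' computes the codomain $K_*(C^*_r(\G))$ but says nothing about how it relates to $H_*(\G)$. So the attempt has a genuine, unfillable gap at every stage beyond the special cases.

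What \emph{is} correct in your proposal is the class-by-class verification, and there it coincides with what the paper actually does: $H_0\cong K_0$ for AF groupoids via \cite[Theorem 4.10, 4.11]{M12PLMS}, the two-term computation for minimal $\Z$-actions, the Bowen--Franks computation for SFT groupoids (Theorem~\ref{SFTproperty}), and closure under products by combining the groupoid K\"unneth theorem (Theorem~\ref{Kunneth}) with the $C^*$-algebraic one, which is exactly Theorem~\ref{HKforproduct} and Theorem~\ref{prodSFTHK}. One caveat even there: you flag that the K\"unneth isomorphisms must be ``natural enough to intertwine'' and the splittings ``chosen compatibly''; the paper's Theorem~\ref{HKforproduct} does not need this, because it only asserts abstract isomorphisms of the graded groups, for which it suffices that both K\"unneth sequences split (non-canonically) with isomorphic end terms. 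The honest conclusion is the one you reach yourself: the general statement remains a conjecture, and your submission should be labelled as a verification of special cases plus a speculative strategy, not as a proof.
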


Recall that 
two minimal \'etale groupoids $\G_1$ and $\G_2$ are said to be 
Morita equivalent (or Kakutani equivalent) 
if there exists a clopen subset $Y_i\subset\G_i^{(0)}$ for $i=1,2$ 
such that $\G_1|Y_1\cong\G_2|Y_2$ 
(see \cite[Definition 4.1]{M12PLMS} for instance). 

\begin{proposition}
Let $\G$ and $\H$ be essentially principal minimal \'etale groupoids 
whose unit spaces are Cantor sets. 
If $\G$ and $\H$ are Morita equivalent and 
$\G$ satisfies the HK conjecture, then so does $\H$. 
\end{proposition}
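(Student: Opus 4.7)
The plan is to reduce everything to the observation that both groupoid homology and the $K$-theory of the reduced groupoid $C^*$-algebra are unchanged when one passes from a minimal essentially principal étale groupoid to its reduction on a clopen subset of the unit space; Morita equivalence then transports the HK isomorphisms from $\G$ to $\H$.

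First I would unpack the hypothesis: by definition of Morita equivalence there exist clopen subsets $Y_1\subset\G^{(0)}$ and $Y_2\subset\H^{(0)}$ together with a groupoid isomorphism $\G|Y_1\cong\H|Y_2$, which automatically yields $H_n(\G|Y_1)\cong H_n(\H|Y_2)$ and $K_i(C^*_r(\G|Y_1))\cong K_i(C^*_r(\H|Y_2))$. Thus the task reduces to proving, for any minimal essentially principal étale groupoid $\G$ with Cantor unit space and any nonempty clopen $Y\subset\G^{(0)}$, the two invariance statements $H_n(\G)\cong H_n(\G|Y)$ and $K_i(C^*_r(\G))\cong K_i(C^*_r(\G|Y))$.

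The $C^*$-algebraic side is routine. Minimality of $\G$, together with the étale structure, forces the closed two-sided ideal generated by $1_Y$ in $C^*_r(\G)$ to be everything: translating $1_Y$ by open $\G$-sets covers $\G^{(0)}$ by orbit density, and the $\G$-translates of $C_0(\G^{(0)})$ already span a dense subset of $C^*_r(\G)$. So $1_Y$ is a full projection, and the corner identification $C^*_r(\G|Y)=1_Y C^*_r(\G)1_Y$ exhibits $C^*_r(\G|Y)$ as a full corner of $C^*_r(\G)$; Brown's theorem then gives strong Morita equivalence and hence an isomorphism on $K$-theory.

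For the homological invariance I would invoke the Morita invariance of groupoid homology established by Crainic and Moerdijk in \cite{CM00crelle}: the reduction $\G|Y$ is Morita equivalent to $\G$ in the sense of étale groupoids (this is essentially the motivating example), so $H_n(\G)\cong H_n(\G|Y)$ for every $n$. With both invariances in hand, chaining the HK isomorphisms for $\G$ through $\G\leftrightarrow\G|Y_1\cong\H|Y_2\leftrightarrow\H$ delivers the HK conjecture for $\H$. The main and essentially only nontrivial obstacle is the homological Morita invariance; everything else is formal bookkeeping.
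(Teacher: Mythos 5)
Your proposal is correct and follows essentially the same route as the paper: identify $C^*_r(\G|Y)$ with the (full, by minimality) corner $1_YC^*_r(\G)1_Y$ to get $K$-theoretic invariance under reduction, invoke Crainic--Moerdijk's Morita invariance of groupoid homology for the homological side, and chain the isomorphisms through $\G|Y_1\cong\H|Y_2$. The only difference is that you spell out the fullness of $1_Y$, which the paper leaves implicit.
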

\begin{proof}
Let $\G$ be an essentially principal minimal \'etale groupoid 
and let $Y\subset\G^{(0)}$ be a clopen subset. 
Then $C^*_r(\G|Y)$ is canonically isomorphic to 
the hereditary subalgebra $1_YC^*_r(\G)1_Y$ of $C^*_r(\G)$. 
Therefore, if $\G$ and $\H$ are Morita equivalent, then 
$C^*_r(\G)$ is strongly Morita equivalent to $C^*_r(\H)$. 
It is well-known that 
$K$-groups of $C^*$-algebras are invariant under strong Morita equivalence. 
On the other hand, 
by \cite[Corollary 4.6]{CM00crelle} (see also \cite[Theorem 4.8]{M12PLMS}), 
homology groups of \'etale groupoids are invariant under Morita equivalence. 
Hence we get the conclusion. 
\end{proof}

As an immediate consequence of Theorem \ref{Kunneth}, 
we get the following. 

\begin{theorem}\label{HKforproduct}
Let $\G$ and $\H$ be \'etale groupoids 
whose unit spaces are Cantor sets. 
Suppose that $C^*_r(\G)$ is nuclear and satisfies the UCT. 
If the HK conjecture is true for $\G$ and $\H$, 
then it is also true for the product groupoid $\G\times\H$. 
\end{theorem}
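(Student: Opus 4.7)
The plan is to combine the homological K\"unneth theorem (Theorem \ref{Kunneth}) just proved with the operator $K$-theoretic K\"unneth theorem of Rosenberg and Schochet, and to match the two resulting split short exact sequences term by term using the HK conjecture assumed for $\G$ and $\H$.

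First, I would use nuclearity of $C^*_r(\G)$ to identify $C^*_r(\G\times\H)$ with the tensor product $C^*_r(\G)\otimes C^*_r(\H)$. Since $C^*_r(\G)$ is nuclear and satisfies the UCT, the operator $K$-theoretic K\"unneth theorem applies and yields, for each $k\in\{0,1\}$, a (non-canonically) split short exact sequence
\[
0\to\bigoplus_{i+j\equiv k}K_i(C^*_r(\G))\otimes K_j(C^*_r(\H))\to K_k(C^*_r(\G\times\H))\to\bigoplus_{i+j\equiv k+1}\Tor(K_i(C^*_r(\G)),K_j(C^*_r(\H)))\to 0,
\]
with all congruences taken modulo $2$ and $i,j\in\{0,1\}$.

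Next, I would apply Theorem \ref{Kunneth} to $\G\times\H$ in each degree $n$ and take the direct sum of the resulting split short exact sequences over all $n$ with $n\equiv k\pmod 2$. Since tensor product and $\Tor$ commute with direct sums, regrouping the pairs $(i,j)\in\N^2$ by the parity of $i+j$ produces a split short exact sequence
\[
0\to\bigoplus_{i+j\equiv k}H_i(\G)\otimes H_j(\H)\to\bigoplus_{m\geq 0}H_{2m+k}(\G\times\H)\to\bigoplus_{i+j\equiv k+1}\Tor(H_i(\G),H_j(\H))\to 0.
\]

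Finally, I would invoke the assumed HK conjecture for $\G$ and $\H$ to identify $\bigoplus_a H_{2a}(\G)\cong K_0(C^*_r(\G))$ and $\bigoplus_a H_{2a+1}(\G)\cong K_1(C^*_r(\G))$, and similarly for $\H$. Under these identifications the outer terms of the two split short exact sequences become isomorphic (the tensor and $\Tor$ decompositions by the parities of $i$ and $j$ line up on both sides), so by splitting the middle terms must be isomorphic as well, giving
\[
K_k(C^*_r(\G\times\H))\cong\bigoplus_{m\geq 0}H_{2m+k}(\G\times\H),\qquad k=0,1,
\]
which is the HK conjecture for $\G\times\H$. I do not anticipate a genuine obstacle; every ingredient is already in place and the work reduces to careful bookkeeping with parities of indices. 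The one subtlety worth flagging is that the splittings of both K\"unneth sequences are non-canonical, but this is harmless since only the isomorphism class of the middle term is being compared.
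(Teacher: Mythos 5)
Your proposal is correct and follows exactly the route the paper takes: the paper's proof is a one-line citation of Theorem \ref{Kunneth} together with the $C^*$-algebraic K\"unneth theorem \cite[Theorem 23.1.3]{Btext}, and your argument simply spells out the parity bookkeeping that makes the two split short exact sequences match. Nothing is missing.
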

\begin{proof}
This follows from Theorem \ref{Kunneth} 
(the K\"unneth theorem for groupoids) and 
\cite[Theorem 23.1.3]{Btext} 
(the K\"unneth theorem for $C^*$-algebras). 
\end{proof}

The second conjecture says that 
the abelianization $[[\G]]_\ab$ has a close connection 
to the homology groups $H_0(\G)$ and $H_1(\G)$. 

\begin{conjecture}[AH conjecture]\label{AH}
Let $\G$ be an essentially principal minimal \'etale groupoid 
whose unit space $\G^{(0)}$ is a Cantor set. 
Then there exists an exact sequence 
\[
\begin{CD}
H_0(\G)\otimes\Z_2@>>>[[\G]]_\ab@>I>>H_1(\G)@>>>0. 
\end{CD}
\]
Especially, if $H_0(\G)$ is $2$-divisible, 
then we have $[[\G]]_\ab\cong H_1(\G)$. 
\end{conjecture}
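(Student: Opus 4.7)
The plan is to construct the first map in the sequence explicitly by assigning to each clopen class in $H_0(\G)$ an involution in $[[\G]]$, and then to verify exactness in two halves. Given a clopen set $A\subset\G^{(0)}$, I would use minimality of $\G$ to produce a compact open $\G$-set $U$ with $s(U)=A$ and $r(U)\cap A=\emptyset$, and then define $\tau_A\in[[\G]]$ to be the involution that swaps $A$ with $r(U)$ via $\theta(U)$ and fixes the complement. The target map is
\[
j\colon H_0(\G)\otimes\Z_2\longrightarrow[[\G]]_\ab,\qquad[1_A]\otimes\bar 1\longmapsto[\tau_A].
\]

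Well-definedness requires three verifications. Since $\tau_A^2=\id$, its class in $[[\G]]_\ab$ has order dividing two, so the assignment descends through the tensor product with $\Z_2$. Two different choices of $U$ yield involutions that are conjugate within $[[\G]]$, hence equal after abelianization. Additivity on disjoint unions of clopen sets defines $j$ on the positive cone $H_0(\G)^+$, and the Grothendieck construction then extends it to all of $H_0(\G)$. The subtlest check is independence from the representative of $[1_A]$ in $H_0(\G)$: if $1_A-1_B=\partial_1\varphi$ for some $\varphi\in C_c(\G,\Z)$, one must expand $\varphi$ as a signed sum of indicators of compact open $\G$-sets and use these pieces to manufacture an explicit product of commutators carrying $\tau_A$ to $\tau_B$.

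Both easy halves of exactness can be verified directly. The induced map $\bar I\colon[[\G]]_\ab\to H_1(\G)$ is surjective because any $1$-cycle in $C_c(\G,\Z)$ is, modulo a boundary, an integer combination of indicators of compact open $\G$-sets with pairwise disjoint sources and ranges, which lifts tautologically into $[[\G]]$. For $\bar I\circ j=0$, one computes $I(\tau_A)=[1_U+1_{U^{-1}}]$; setting $W=\{(g,g^{-1})\mid g\in U\}\subset\G^{(2)}$ and letting $W'$ be the diagonal inclusion of $r(U)$ in $\G^{(2)}$, a direct calculation gives $\partial_2(1_W+1_{W'})=1_U+1_{U^{-1}}$, so $I(\tau_A)=0$ in $H_1(\G)$.

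The hard part, and the reason the statement is only a conjecture, is the remaining inclusion $\Ker(\bar I)\subset\Ima(j)$: every element of $[[\G]]_0$ must, modulo commutators, be a product of involutions of the form $\tau_A$. This is a delicate generation-modulo-commutators assertion with no abstract proof on offer, and it must depend on structural features of $\G$. Following the introduction's preview, I would handle it case by case. For almost finite groupoids (Theorem \ref{almstfin>AH}), I would exhaust $\G$ by an increasing sequence of elementary compact open subgroupoids, inside each of which $[[\G]]_0$ is essentially a finite product of symmetric-group pieces whose abelianization is classically controlled by transpositions; the $\tau_A$ play exactly that role. For purely infinite groupoids with property TR (Theorem \ref{pi>AH}), I would exploit the infinite repetition provided by pure infiniteness together with TR to put any element into a standard form and then imitate the Higman--Thompson-style arguments of \cite{M15crelle}. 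Outside these two regimes the inclusion appears to require genuinely new ideas.
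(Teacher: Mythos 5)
Your outline agrees with the paper's in its broad architecture: the statement is a conjecture, and the paper's own treatment consists of (i) the explicit construction of the candidate map $j\colon H_0(\G)\otimes\Z_2\to[[\G]]_\ab$ by sending a class mod $2$ to a product of involutions $\tau_U$ attached to compact open $\G$-sets $U$ with $r(U)\cap s(U)=\emptyset$, and (ii) case-by-case verification of exactness under structural hypotheses (Theorems \ref{almstfin>AH}, \ref{pi>AH}, \ref{prodSFTAH}). Your construction of $j$, your computation that $\bar I\circ j=0$ via the degenerate $2$-chains $\{(g,g^{-1})\mid g\in U\}$, and your identification of exactness at the middle term as the genuinely hard part are all consistent with the paper. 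Note, though, that even the well-definedness of $j$ is not settled in the paper itself but deferred to \cite{Ne15}; your plan to ``manufacture an explicit product of commutators'' from a boundary $1_A-1_B=\partial_1\phi$ is exactly the step that in practice requires cancellation-type hypotheses (Theorem \ref{almstfin} (3) together with Lemma \ref{H_0mod2}, or Theorem \ref{pi} (3) with Lemma \ref{H_0ofpi}) and does not come for free.

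Two points in your proposal are wrong as stated. First, surjectivity of the index map is \emph{not} tautological: it is part of the conjecture (exactness at $H_1(\G)$), and in the verified cases it is a nontrivial theorem, namely Theorem \ref{almstfin} (2) (from \cite{M12PLMS}) for almost finite groupoids and Theorem \ref{pi} (2) (from \cite{M15crelle}) for purely infinite ones. The obstruction your sketch skips is that a cycle written as $\sum_i n_i 1_{U_i}$ lifts to an element of $[[\G]]$ only if the sources and ranges of the pieces can be arranged to tile $\G^{(0)}$ compatibly; producing such a representative within a given homology class uses comparison or cancellation properties of $H_0(\G)$, not merely the existence of a basis of compact open bisections. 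Second, your argument for the almost finite case describes an exact exhaustion of $\G$ by elementary subgroupoids; that is the definition of an AF groupoid, not of an almost finite one, and a free minimal $\Z^N$-action with $N\geq2$ gives an almost finite groupoid admitting no such exhaustion. The paper's actual route is property TR: by \cite[Theorem 7.13]{M12PLMS} every element of $[[\G]]_0$ is a product of four elements of finite order, hence, when $\G$ is principal, a product of transpositions (Theorem \ref{almstfin} (4)), and these transpositions land in the image of $j$.
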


Indeed, in many examples we can verify that 
there exists a short exact sequence 
\[
\begin{CD}
0@>>>H_0(\G)\otimes\Z_2@>>>[[\G]]_\ab@>I>>H_1(\G)@>>>0. 
\end{CD}
\]
In such a case, 
we say that $\G$ satisfies the strong AH property. 

\begin{remark}
At the beginning of this research, 
I had expected that 
this stronger version of AH might be true in general. 
But V. V. Nekrashevych kindly informed me that 
this is not the case. 
For every locally expanding self-covering map $f:M\to M$ 
of a compact path connected metric space $M$, 
he introduced a finitely presented group $V_f$ (\cite{Ne13}). 
The group $V_f$ can be regarded 
as a topological full group of an \'etale groupoid on a Cantor set. 
The abelianization of $V_f$ is computed in \cite[Section 5.3]{Ne13}, 
and it shows that the map $H_0(\G)\otimes\Z_2\to[[\G]]_\ab$ is 
not always injective (see \cite[Proposition 5.8]{Ne13}). 
The reader may find its detailed description in \cite[Example 7.1]{Ne15}. 
After this communication with him, 
I noticed that 
product groupoids of SFT groupoids do not always have the strong AH property. 
We discuss it in Section 5.5. 
\end{remark}

The homomorphism $H_0(\G)\otimes\Z_2\to[[\G]]_\ab$ 
appearing in the AH conjecture is described as follows. 
Let $f\in C(\G^{(0)},\Z)$ and 
let $A=\{x\in\G^{(0)}\mid f(x)\notin2\Z\}$. 
Then $f-1_A$ is in $2C(\G^{(0)},\Z)$. 
Since $\G$ is minimal, for any $x\in A$, 
there exists a compact open $\G$-set $U$ 
such that $x\in s(U)\subset A$ and $r(U)\cap s(U)=\emptyset$. 
Using the compactness of $A$, 
we can find compact open $\G$-sets $U_1,U_2,\dots,U_n$ 
such that $\{s(U_i)\mid i=1,2,\dots,n\}$ is a clopen partition of $A$ 
and $r(U_i)\cap s(U_i)=\emptyset$. 
Define $\tau_i\in[[\G]]$ by 
\[
\tau_i(x)=\begin{cases}\theta(U_i)(x)&x\in s(U_i)\\
\theta(U_i^{-1})(x)&x\in r(U_i)\\x&\text{otherwise. }\end{cases}
\]
The homomorphism $H_0(\G)\otimes\Z_2\to[[\G]]_\ab$ is defined 
by sending the equivalence class of $f$ to 
the equivalence class of $\tau_1\tau_2\dots\tau_n$. 
Notice that it is not clear at all if this is well-defined. 
One can find a proof of the well-definedness in \cite{Ne15}. 

We introduce two properties of $\G$ 
which are related to the AH conjecture. 
We call $\tau\in[[\G]]$ a transposition 
if $\tau^2=1$ and $\{x\in\G^{(0)}\mid\tau(x)=x\}$ is clopen. 
By \cite[Lemma 7.7 (3)]{M12PLMS}, 
any transposition belongs to $[[\G]]_0$. 

\begin{definition}
Let $\G$ be an essentially principal \'etale groupoid 
whose unit space $\G^{(0)}$ is a Cantor set. 
\begin{enumerate}
\item We say that $\G$ has cancellation 
if for any clopen sets $A_1,A_2\subset\G^{(0)}$ 
with $[1_{A_1}]=[1_{A_2}]$ in $H_0(\G)$ 
there exists a compact open $\G$-set $U\subset\G$ 
such that $s(U)=A_1$ and $r(U)=A_2$. 
\item We say that $\G$ has property TR 
if the group $[[\G]]_0$ is generated by transpositions. 
\end{enumerate}
\end{definition}

When $\G$ has cancellation, 
it is easy to see that for any $A_1,A_2$ as above 
we can find a transposition $\tau$ such that $\tau(A_1)=A_2$ 
and $\tau(x)=x$ for $x\in\G^{(0)}\setminus(A_1\cup A_2)$.

\section{Almost finite groupoids}

In this section we show that Conjecture \ref{AH} holds 
for principal, minimal, almost finite groupoids. 

Let us recall the notion of AF (approximately finite) groupoids 
(\cite[Definition III.1.1]{Rtext}, \cite[Definition 3.7]{GPS04ETDS}). 

\begin{definition}
Let $\G$ be an \'etale groupoid 
whose unit space is compact and totally disconnected. 
\begin{enumerate}
\item We say that $\G$ is elementary if $\G$ is compact and principal. 
\item We say that $\G$ is an AF groupoid 
if there exists an increasing sequence $(\K_n)_n$ of 
elementary subgroupoids of $\G$ 
such that $\K_n^{(0)}=\G^{(0)}$ and $\bigcup_n\K_n=\G$. 
\end{enumerate}
\end{definition}

Let $\G$ be a (not necessarily minimal) AF groupoid. 
The topological full group $[[\G]]$ is written 
as an increasing union of finite direct sums of symmetric groups 
(\cite[Proposition 3.3]{M06IJM}). 
In particular, $[[\G]]$ is locally finite 
(actually, its converse also holds, i.e. 
if $[[\G]]$ is locally finite, then $\G$ is AF, 
see \cite[Proposition 3.2]{M06IJM}). 
The $C^*$-algebra $C^*_r(\G)$ is an AF algebra. 
By \cite[Theorem 4.10, 4.11]{M12PLMS}, 
$H_n(\G)$ is trivial for $n\geq1$ and 
$H_0(\G)$ is isomorphic to $K_0(C^*_r(\G))$ (as a unital ordered group). 
Hence, the HK conjecture holds for $\G$. 
When every $\G$-orbit contains more than one point, 
by \cite[Lemma 3.5]{M06IJM}, 
$[[\G]]_\ab$ is isomorphic to $H_0(\G)\otimes\Z_2$. 
Thus, $\G$ has the strong AH property. 
Moreover, when $\G$ is minimal, $D([[\G]])$ is simple 
(\cite[Lemma 3.5]{M06IJM}). 

Next, we would like to recall the definition of almost finite groupoids. 

\begin{definition}[{\cite[Definition 6.2]{M12PLMS}}]
Let $\G$ be an \'etale groupoid whose unit space is a Cantor set. 
We say that $\G$ is almost finite 
if for any compact subset $C\subset\G$ and $\ep>0$ 
there exists an elementary subgroupoid $\K\subset\G$ such that 
\[
\frac{\#(C\K x\setminus\K x)}{\#(\K x)}<\ep
\]
for all $x\in\G^{(0)}$. 
We also remark that $\#(\K(x))$ equals $\#(\K x)$, because $\K$ is principal. 
\end{definition}

The following are known for almost finite groupoids. 

\begin{theorem}\label{almstfin}
Let $\G$ be an almost finite groupoid. 
\begin{enumerate}
\item If $\G$ is minimal, then $D([[\G]])$ is simple. 
\item The index map $I:[[\G]]\to H_1(\G)$ is surjective. 
\item If $\G$ is minimal, then $\G$ has cancellation. 
\item If $\G$ is principal, then $\G$ satisfies property TR. 
\end{enumerate}
\end{theorem}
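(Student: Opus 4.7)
The plan is to exploit the absorption property built into the definition of almost finiteness: for any compact $C\subset\G$ and $\ep>0$, there is an elementary subgroupoid $\K\subset\G$ (compact, principal, with $\K^{(0)}=\G^{(0)}$) such that the $C$-boundary of each $\K$-orbit has relative size less than $\ep$. Because $\K$ is elementary, $[[\K]]$ is a finite direct sum of finite symmetric groups acting on the $\K$-orbits, and $\G^{(0)}$ is partitioned into finitely many clopen Kakutani-type towers over a common base. This reduces each of the four assertions to a combinatorial statement about finite permutation groups on these towers, modulo an $\ep$-sized perturbation.

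For (3), start with $[1_{A_1}]=[1_{A_2}]$ and write $1_{A_1}-1_{A_2}=\partial_1 f$ for some $f\in C_c(\G,\Z)$. Choose $\K$ whose orbit decomposition refines $A_1$, $A_2$ and every compact open $\G$-set appearing in a chosen expression of $f$; then the $\partial_1$-identity forces $A_1$ and $A_2$ to meet each $\K$-orbit in the same number of points, and one assembles $U$ orbit-by-orbit from any such bijection. Minimality is used to control the $\ep$-fraction of exceptional $\K$-orbits, which after a further absorption step is absorbed into a single compact open bisection. For (4), a given $\alpha\in[[\G]]_0$ is approximated by an element of $[[\K]]$; since $\K$ is principal and elementary, $[[\K]]$ is an honest direct sum of symmetric groups, and each such element is manifestly a product of transpositions in the sense of the paper. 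The residual correction has strictly smaller support and is handled by iteration on a finer $\K'$, the induction terminating by a compactness or dimension argument on the base of the tower.

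For (2), every cycle $f\in\Ker\partial_1\subset C_c(\G,\Z)$ is a $\Z$-linear combination of characteristic functions of compact open bisections. Approximating by a sufficiently fine $\K$ and working orbit by orbit, the cycle condition lets one match sources to ranges and rearrange these bisections (modulo a $\partial_2$-boundary) into a single compact open $\G$-set $W$ with $s(W)=r(W)=\G^{(0)}$ and $[1_W]=[f]$ in $H_1(\G)$; the element $\theta(W)\in[[\G]]$ then satisfies $I(\theta(W))=[f]$. For (1), suppose $N\trianglelefteq D([[\G]])$ contains a nontrivial $\alpha$. Choose $x_0$ with $\alpha(x_0)\ne x_0$, pick a small clopen neighbourhood $V$ of $x_0$ disjoint from $\alpha(V)$, and use minimality combined with an almost finite approximation to conjugate $\alpha$ by a commutator so that the result is nontrivial and supported inside a single $\K$-tower. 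This produces a nontrivial element of a finite symmetric group sitting inside $N\cap D([[\G]])$, hence (by simplicity of alternating groups on towers of size $\geq 5$) the full alternating summand on that tower. A second almost finite absorption propagates this inclusion to every $\K$-tower, and the union exhausts $D([[\G]])$.

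The main obstacle, common to all four parts, is that the approximating $\K$ is \emph{not} an exhausting subgroupoid of $\G$: the absorption is only quantitative, leaving a boundary fraction of size $\ep$ outside $\K$. The substance of the argument is the bootstrapping needed to show that the residual small-support correction remains inside the subgroup or ideal of interest ($D([[\G]])$, the subgroup generated by transpositions, or the image of $I$) so that an exact algebraic identity can be extracted from the $\ep$-close combinatorial one. Controlling this correction — and in particular keeping each successive refinement from destroying the normality, index-zero condition, or orbit structure already achieved at the previous stage — is where the real work sits, and it parallels the AF-groupoid arguments in the author's earlier papers but with an extra layer of quantitative approximation.
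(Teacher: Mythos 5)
Your proposal is an architectural outline rather than a proof, and your own closing paragraph concedes the point: you state that the ``real work'' is the bootstrapping needed to control the $\ep$-sized residual left outside the approximating elementary subgroupoid $\K$, and then you do not carry out that work for any of the four parts. The paper itself disposes of (1)--(3) by citation --- (1) is \cite[Theorem 4.7]{M15crelle}, (2) is \cite[Theorem 7.5]{M12PLMS}, (3) is \cite[Theorem 6.12]{M12PLMS} --- and proves (4) in three lines by invoking \cite[Theorem 7.13]{M12PLMS}, which says that every element of $[[\G]]_0$ is a product of four elements of finite order; since $\G$ is principal, each finite-order element is elementary in the sense of \cite[Definition 7.6\,(1)]{M12PLMS} and hence a product of transpositions. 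That cited theorem is precisely the deep input your sketch is missing: your replacement for it --- approximate $\alpha$ by an element of $[[\K]]$, note the correction has smaller support, and iterate over finer $\K'$ --- has no termination mechanism. The supports need not reach the empty set in finitely many steps, and an infinite product of transpositions is not an element of $\langle\mathcal{T}\rangle$; no ``compactness or dimension argument on the base of the tower'' is supplied or apparent.

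There is also a concrete false step in your treatment of (3). You claim that after choosing $\K$ whose orbit decomposition refines $A_1$, $A_2$ and the bisections supporting $f$ in $1_{A_1}-1_{A_2}=\partial_1 f$, the identity ``forces $A_1$ and $A_2$ to meet each $\K$-orbit in the same number of points.'' This would require the bisections carrying $f$ to lie inside $\K$, which in general cannot be arranged --- they move points between distinct $\K$-orbits, and refining the orbit decomposition does not change that. The correct mechanism, used in \cite[Theorem 6.12]{M12PLMS}, is measure-theoretic: $[1_{A_1}]=[1_{A_2}]$ in $H_0(\G)$ gives $\mu(A_1)=\mu(A_2)$ for every $\mu\in M(\G)$, and almost finiteness supplies a comparison property (if $\mu(A)<\mu(B)$ for all invariant $\mu$ then $A$ is subequivalent to $B$) from which cancellation follows by an exhaustion argument. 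Your orbit-counting picture is the right intuition \emph{inside} $\K$ but does not transfer to $\G$ without this machinery. Similar remarks apply to (2), where ``rearranging the bisections modulo a $\partial_2$-boundary into a single bisection $W$ with $s(W)=r(W)=\G^{(0)}$'' is the entire content of \cite[Theorem 7.5]{M12PLMS} and is asserted rather than proved. If you intend to cite these results, as the paper does, the statement is fine; if you intend to reprove them, the residual control you defer is the proof.
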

\begin{proof}
(1) is \cite[Theorem 4.7]{M15crelle}. 
(2) is \cite[Theorem 7.5]{M12PLMS}. 
(3) is \cite[Theorem 6.12]{M12PLMS}. 

(4)
By \cite[Theorem 7.13]{M12PLMS}, 
any element of $[[\G]]_0$ is a product of four elements of finite order. 
Since $\G$ is principal, any element of finite order is elementary 
in the sense of \cite[Definition 7.6 (1)]{M12PLMS}. 
Therefore it is a product of transpositions. 
\end{proof}

For a $\G$-invariant probability measure $\mu\in M(\G)$, 
we can define a homomorphism $\hat\mu:H_0(\G)\to\R$ by 
\[
\hat\mu([f])=\int f\,d\mu. 
\]
It is clear that 
$\hat\mu([1_{\G^{(0)}}])=1$ and $\hat\mu(H_0(\G)^+)\subset[0,\infty)$. 
Thus $\hat\mu$ is a state on $(H_0(\G),H_0(\G)^+,[1_{\G^{(0)}}])$ 
(see \cite[Definition 6.8.1]{Btext} for instance). 
It is also easy to see that the map $\mu\mapsto\hat\mu$ gives 
an isomorphism from $M(\G)$ to the state space. 

\begin{theorem}\label{almstfin2}
Let $\G$ be a minimal almost finite groupoid. 
\begin{enumerate}
\item $(H_0(\G),H_0(\G)^+)$ is a simple, weakly unperforated, 
ordered abelian group with the Riesz interpolation property. 
\item The homomorphism $\rho:H_0(\G)\to\Aff(M(\G))$ 
defined by $\rho([f])(\mu)=\hat\mu([f])$ has uniformly dense range, 
where $\Aff(M(\G))$ denotes 
the space of $\R$-valued affine continuous functions on $M(\G)$. 
\end{enumerate}
\end{theorem}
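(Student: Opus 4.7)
My plan is to derive both parts from a strict comparison principle for $H_0(\G)$, which I would extract from almost finiteness combined with the cancellation property of Theorem \ref{almstfin}(3). Concretely: for clopen sets $A,B\subset\G^{(0)}$, if $\mu(A)<\mu(B)$ for every $\mu\in M(\G)$, then there exists a compact open $\G$-set $U$ with $s(U)=A$ and $r(U)\subset B$. The argument uses almost finiteness to produce an elementary subgroupoid $\K$ whose orbit partition aligns with $A$ and $B$ up to a boundary set of $\mu$-mass less than $\tfrac{1}{2}\min_\mu(\mu(B)-\mu(A))>0$; on each $\K$-orbit the strict measure inequality becomes a counting inequality yielding an injection of $A\cap\K(x)$ into $B\cap\K(x)$, and cancellation upgrades this orbit-by-orbit data to the desired compact open $\G$-set.

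For part (1), simplicity is direct from minimality: by compactness, any nonempty clopen $A\subset\G^{(0)}$ satisfies $[1_{\G^{(0)}}]\leq N[1_A]$ in $H_0(\G)$ for some $N$, so every nonzero positive class is an order unit. Weak unperforation then follows from simplicity and strict comparison: if $n[f]>0$, then $n[f]$ is an order unit, so $\hat\mu(n[f])>0$ and hence $\hat\mu([f])>0$ for every $\mu\in M(\G)$, and applying strict comparison to positive/negative decompositions of representatives in $C(\G^{(0)},\Z)$ gives $[f]>0$. For Riesz interpolation, given $a_i\leq b_j$ in $H_0(\G)^+$, I would represent all four classes by functions constant on orbits of a common elementary subgroupoid $\K$ (using almost finiteness to control the boundary error uniformly in invariant measures); interpolation inside the simplicial group $H_0(\K)$ of finite rank is immediate, and strict comparison transports the interpolant back to $H_0(\G)$, with the controlled error absorbed into the inequalities.

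For part (2), I would invoke the structure theory of simple ordered groups: for a simple, weakly unperforated ordered abelian group with Riesz interpolation and distinguished order unit $u$, the canonical evaluation map into the affine continuous functions on the state space has uniformly dense range (a standard Goodearl--Handelman-type result). Since $M(\G)$ is exactly the state space of $(H_0(\G),H_0(\G)^+,[1_{\G^{(0)}}])$ via $\mu\mapsto\hat\mu$, and (1) supplies the structural hypotheses, uniform density of $\rho(H_0(\G))$ in $\Aff(M(\G))$ follows at once. The main obstacle of the whole argument is the strict comparison principle itself: the delicate point is arranging an elementary subgroupoid $\K$ that aligns simultaneously with $A$ and $B$ \emph{uniformly} across all invariant measures, since almost finiteness a priori supplies only pointwise approximation on $\G^{(0)}$, and one must integrate against measures while keeping orbit-wise counting under control to close the argument via cancellation.
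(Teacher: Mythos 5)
Your overall strategy is close in spirit to what the paper does: part (1) is quoted from \cite[Proposition 6.10]{M12PLMS} (whose proof indeed runs through comparison and elementary subgroupoid approximation of the kind you sketch), and part (2) is deduced from the general theory of simple dimension groups. However, there is one genuine gap in your part (2). The ``standard Goodearl--Handelman-type result'' you invoke is \emph{false} as you state it: for a simple interpolation group with order unit, density of the canonical image in $\Aff$ of the state space fails precisely in the cyclic case. If $H_0(\G)/\Tor(H_0(\G))\cong\Z$, the state space is a single point, $\Aff(M(\G))\cong\R$, and the image of $\rho$ is a discrete subgroup, hence not dense. The correct statement of the classical theorem requires the simple dimension group to be noncyclic, and the paper explicitly records the needed verification: ``$H_0(\G)/\Tor(H_0(\G))$ is not equal to $\Z$.'' Your argument never addresses this. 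To close the gap you must rule out the cyclic case, e.g.\ by observing that any $\mu\in M(\G)$ is non-atomic (minimality plus finiteness of $\mu$ forbids atoms), so $\G^{(0)}$ contains clopen sets of arbitrarily small positive measure, whose classes are nonzero positive elements with arbitrarily small state values; this is incompatible with $\hat\mu(H_0(\G))$ being discrete. You also implicitly use $M(\G)\neq\emptyset$ throughout (already in defining ``strict comparison'' and in the weak unperforation step); this is true for almost finite groupoids but is a fact that needs to be cited, not assumed.

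A secondary remark on part (1): your reconstruction is plausible and follows the same comparison-based route as the cited proof, but the Riesz interpolation step is the least developed part. Representing four classes simultaneously by functions constant on $\K$-orbits of a single elementary subgroupoid, with the boundary error controlled \emph{uniformly over all} $\mu\in M(\G)$, is exactly the delicate point (as you yourself flag), and as written it is a plan rather than a proof. Since the paper disposes of (1) by citation, the cleanest fix is to do the same; if you insist on a self-contained argument, the uniform-in-$\mu$ error estimate (which follows from the pointwise counting bound $\#(C\K x\setminus\K x)/\#(\K x)<\ep$ by integrating against any invariant measure) should be made explicit before the interpolation step can be considered complete.
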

\begin{proof}
(1) is \cite[Proposition 6.10]{M12PLMS}. 
By (1), $H_0(\G)/\Tor(H_0(\G))$ is a simple dimension group 
(see \cite[Section 7.4]{Btext} for instance). 
Besides, $H_0(\G)/\Tor(H_0(\G))$ is not equal to $\Z$. 
It follows that $\rho$ has uniformly dense range in $\Aff(M(\G))$. 
\end{proof}

By using these properties of almost finite groupoids, 
we can show the AH conjecture (Theorem \ref{almstfin>AH}). 

\begin{lemma}\label{H_0mod2}
Let $\G$ be a minimal almost finite groupoid. 
Let $f\in C(\G^{(0)},\Z)$. 
For any non-empty clopen set $A\subset\G^{(0)}$, 
there exists a clopen subset $B\subset A$ 
such that $f$ and $1_B$ have the same equivalence class 
in $H_0(\G)\otimes\Z_2$. 
\end{lemma}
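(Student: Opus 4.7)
The plan is to reduce to the case $f = 1_C$ for a clopen set $C$ and then use minimality to push $C$ into $A$, exploiting that overlaps of translates are automatically absorbed modulo $2$.

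First I would set $C := \{x \in \G^{(0)} : f(x) \notin 2\Z\}$, which is clopen since $f$ is continuous and $\Z$-valued, and observe that $f - 1_C \in 2 C(\G^{(0)}, \Z)$, so $[f] = [1_C]$ in $H_0(\G) \otimes \Z_2$. The case $C = \emptyset$ is handled by $B = \emptyset$, so assume $C \neq \emptyset$.

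For each $x \in C$, minimality of $\G$ yields $g \in \G$ with $s(g) = x$ and $r(g) \in A$. Since $r$ and $s$ are local homeomorphisms, $g$ lies in a compact open $\G$-set $U_x$ chosen small enough that $s(U_x) \subset C$ and $r(U_x) \subset A$. Compactness of $C$ extracts finitely many $U_1, \dots, U_n$ whose source images cover $C$, and replacing $U_i$ by its restriction to $s(U_i) \setminus \bigcup_{j < i} s(U_j)$ makes $\{s(U_i)\}_{i=1}^n$ a clopen partition of $C$. Using $\partial_1(1_{U_i}) = 1_{s(U_i)} - 1_{r(U_i)}$, we obtain in $H_0(\G)$
\[
[1_C] = \sum_{i=1}^n [1_{s(U_i)}] = \sum_{i=1}^n [1_{r(U_i)}] = [h], \qquad h := \sum_{i=1}^n 1_{r(U_i)}.
\]
Setting $B := \{x \in \G^{(0)} : h(x) \text{ is odd}\}$ produces a clopen subset of $\bigcup_i r(U_i) \subset A$ with $h - 1_B \in 2 C(\G^{(0)}, \Z)$, so $[1_C] \equiv [h] \equiv [1_B] \pmod{2 H_0(\G)}$, as desired.

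The only substantive step is the construction of the $\G$-sets $U_x$, which uses solely the minimality hypothesis; curiously, the almost-finiteness assumption is not needed for this lemma, and presumably earns its keep only in subsequent applications. The conceptual simplification that keeps the argument short is that passing to $H_0(\G) \otimes \Z_2$ removes any need for the $r(U_i)$ to be mutually disjoint, since pointwise multiplicities exceeding $1$ are absorbed into $2 C(\G^{(0)}, \Z)$ when one replaces $h$ by $1_B$.
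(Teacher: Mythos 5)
Your argument is correct, and it takes a genuinely different route from the paper. The paper's proof is measure-theoretic: it uses almost finiteness twice, first through Theorem \ref{almstfin2} (2) to produce $g\in C(\G^{(0)},\Z)$ with $0<\hat\mu(f-2g)<c\leq\mu(A)$ for every $\mu\in M(\G)$, and then through the comparison result \cite[Lemma 6.7]{M12PLMS} to realize the class $[f-2g]$ exactly as $[1_B]$ for a clopen $B\subset A$. You replace all of this by the purely combinatorial observation that minimality lets one transport a clopen partition of $C=\{x\mid f(x)\notin2\Z\}$ into $A$ by compact open bisections, and that overlaps among the ranges are invisible modulo $2$; this uses only minimality together with the \'etale structure and the compactness of the unit space, so your remark that almost finiteness is not needed for the statement as written is correct. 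What the paper's heavier route buys is strictly more than the statement: it produces a nonempty $B$ whose class in $H_0(\G)$ itself (not merely in $H_0(\G)\otimes\Z_2$) equals a prescribed element $[f-2g]$, and that extra control is tacitly reused in the proof of Theorem \ref{almstfin>AH}, where one needs clopen sets $V_i\subset U_i$ with $[1_{s(V_1)}]=[1_{s(V_2)}]$ in $H_0(\G)$ on the nose while $[1_{s(U_i)}]=[1_{s(V_i)}]$ modulo $2$. So your argument is a valid and more elementary proof of Lemma \ref{H_0mod2} as stated, but it cannot simply be substituted for the paper's proof without revisiting that later step. A last cosmetic point: your $B$ may be empty (e.g.\ when $[f]=0$ in $H_0(\G)\otimes\Z_2$), which the statement permits but the paper's construction avoids.
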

\begin{proof}
There exists $c>0$ such that $\mu(A)\geq c$ for any $\mu\in M(\G)$. 
By Theorem \ref{almstfin2} (2), 
there exists $g\in C(\G^{(0)},\Z)$ such that 
\[
\hat\mu(f)-c<2\hat\mu(g)<\hat\mu(f)
\]
for all $\mu\in M(\G)$. 
It follows that $0<\hat\mu(f-2g)<c$ holds for all $\mu\in M(\G)$. 
Hence, by \cite[Lemma 6.7]{M12PLMS}, 
we can find a clopen subset $B\subset A$ such that 
$[1_B]=[f-2g]$ in $H_0(\G)$, 
which means that $1_B$ and $f$ have the same equivalence class 
in $H_0(\G)\otimes\Z_2$. 
\end{proof}

\begin{theorem}\label{almstfin>AH}
Let $\G$ be a principal, minimal, almost finite groupoid. 
Then, Conjecture \ref{AH} is true for $\G$. 
\end{theorem}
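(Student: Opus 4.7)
The plan is to verify exactness of
\[
H_0(\G)\otimes\Z_2\longrightarrow[[\G]]_\ab\xrightarrow{I}H_1(\G)\longrightarrow0
\]
by assembling the ingredients already collected for almost finite groupoids. Surjectivity of $I$ is precisely Theorem \ref{almstfin} (2), and the inclusion $\Ima(H_0(\G)\otimes\Z_2\to[[\G]]_\ab)\subset\Ker I$ is immediate, since the elements $\tau_i$ appearing in the description of the left-hand map are transpositions, and transpositions lie in $[[\G]]_0$ by \cite[Lemma 7.7 (3)]{M12PLMS}. The substantive step is therefore the reverse inclusion.

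Here I would use property TR, which holds for $\G$ by Theorem \ref{almstfin} (4); this is exactly where the principality hypothesis enters. Given $\alpha\in[[\G]]_0$, factor it as a product of transpositions $\alpha=\tau_1\tau_2\dots\tau_n$. For each $\tau_j$, decompose $\{x\in\G^{(0)}\mid\tau_j(x)\neq x\}$ as $A_j\sqcup\tau_j(A_j)$ with $A_j$ clopen and $A_j\cap\tau_j(A_j)=\emptyset$, and let $U_j$ be the compact open $\G$-set implementing $\tau_j|_{A_j}$, so that $s(U_j)=A_j$ and $r(U_j)=\tau_j(A_j)$. Unwinding the explicit description of $H_0(\G)\otimes\Z_2\to[[\G]]_\ab$ given just before the theorem, applied to the function $f=1_{A_j}$: the set $\{x\mid f(x)\notin 2\Z\}$ equals $A_j$, and the trivial one-piece partition $\{A_j\}$ with witness $U_j$ is admissible because $r(U_j)\cap s(U_j)=\emptyset$. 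Consequently $[1_{A_j}]\otimes\bar{1}\in H_0(\G)\otimes\Z_2$ is sent to $[\tau_j]\in[[\G]]_\ab$.

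Passing to the abelian group $[[\G]]_\ab$, the class of $\alpha$ equals $\sum_{j=1}^{n}[\tau_j]$, which is the image of $\sum_{j=1}^{n}[1_{A_j}]\otimes\bar{1}\in H_0(\G)\otimes\Z_2$. This yields $\Ker I\subset\Ima(H_0(\G)\otimes\Z_2\to[[\G]]_\ab)$ and completes the argument. There is no genuine obstacle: the result is essentially a bookkeeping consequence of Theorem \ref{almstfin}, combining surjectivity of $I$ with property TR and the explicit recipe for the left-hand map. The only subtle point requiring verification is that the one-piece ``partition'' $\{A_j\}$ is an allowed choice in that recipe, which it is; conceptually, every transposition $\tau$ supported on $A\sqcup\tau(A)$ is tautologically the image of $[1_A]\otimes\bar{1}$, and this tautology is what drives the proof.
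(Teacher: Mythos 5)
Your outer structure --- surjectivity of $I$ from Theorem \ref{almstfin} (2), and $\Ker I\subset\Ima j$ from property TR (Theorem \ref{almstfin} (4)) together with the observation that every transposition lies in the image of the left-hand map --- is exactly how the paper opens and closes its proof. The gap is in the middle: you declare that ``there is no genuine obstacle'' and treat the map $H_0(\G)\otimes\Z_2\to[[\G]]_\ab$ as already available, but its well-definedness is precisely the obstacle, and the paper flags it as such immediately after describing the recipe (``it is not clear at all if this is well-defined''). Concretely, what must be shown is that the class $[\tau_U]\in[[\G]]_\ab$ of a transposition depends only on the class of $1_{s(U)}$ in $H_0(\G)\otimes\Z_2$ --- in particular that $\tau_U\in D([[\G]])$ whenever $[1_{s(U)}]$ is $2$-divisible in $H_0(\G)$, and that $\tau_{U_1}$ and $\tau_{U_2}$ agree in $[[\G]]_\ab$ whenever $[1_{s(U_1)}]=[1_{s(U_2)}]$ modulo $2$ --- together with additivity of the resulting assignment. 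Your final step, rewriting $[\alpha]=\sum_j[\tau_j]$ as the image of $\sum_j[1_{A_j}]\otimes\bar{1}$, silently uses both facts: without them the image of the recipe need not be a subgroup, and the assignment $[1_{A_j}]\otimes\bar{1}\mapsto[\tau_j]$ is not a tautology but the statement to be proved, since $[1_{A_j}]$ determines $A_j$ only up to a large equivalence.

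The paper spends the bulk of its proof on exactly this point. It first shows that if $[1_{s(U)}]$ vanishes in $H_0(\G)\otimes\Z_2$, then $U$ splits as $U_1\cup U_2$ with $[1_{s(U_1)}]=[1_{s(U_2)}]$ in $H_0(\G)$; cancellation (Theorem \ref{almstfin} (3)) then yields a compact open $\G$-set $V$ with $s(V)=s(U_1)$, $r(V)=s(U_2)$, and the identity $\tau_U=\tau_{U_1}\tau_W\tau_{U_1}\tau_W$ exhibits $\tau_U$ as an element of $D([[\G]])$. It then uses Lemma \ref{H_0mod2} (which rests on the density statement of Theorem \ref{almstfin2} (2)) to compare two arbitrary transpositions whose source sets agree modulo $2$ in $H_0(\G)$. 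Your proposal never invokes Theorem \ref{almstfin} (3) or Lemma \ref{H_0mod2}, which is a symptom of the missing step. If your intention is to import the well-definedness wholesale from the general result attributed to \cite{Ne15}, you should say so explicitly and verify that the map so obtained is a homomorphism; the paper deliberately does not take that route and instead gives a self-contained construction of $j$ in the almost finite setting.
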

\begin{proof}
By Theorem \ref{almstfin} (2), 
the map $[[\G]]_\ab\to H_1(\G)$ is surjective. 

For a compact open $\G$-set $U$ satisfying $r(U)\cap s(U)=\emptyset$, 
we define $\tau_U\in[[\G]]$ by 
\[
\tau_U(x)=\begin{cases}\theta(U)(x)&x\in s(U)\\
\theta(U^{-1})(x)&x\in r(U)\\x&\text{otherwise. }\end{cases}
\]
First, we claim that 
the equivalence class $[\tau_U]$ of $\tau_U$ in $[[\G]]_\ab$ 
depends only on the equivalence class $[1_{s(U)}]$ 
of $1_{s(U)}$ in $H_0(\G)\otimes\Z_2$. 
Suppose that $[1_{s(U)}]$ is zero in $H_0(\G)\otimes\Z_2$. 
By the results of \cite[Section 6.2]{M12PLMS}, 
we can find a clopen subset $U_1\subset U$ 
such that $[1_{s(U)}]=2[1_{s(U_1)}]$ in $H_0(\G)$. 
Let $U_2=U\setminus U_1$. 
By Theorem \ref{almstfin} (3), there exists a compact open $\G$-set $V$ 
such that $s(V)=s(U_1)$ and $r(V)=s(U_2)$. 
Put $W=V\cup U_2VU_1^{-1}$. 
Then we have 
\[
\tau_U=\tau_{U_1}\tau_{U_2}=\tau_{U_1}\tau_W\tau_{U_1}\tau_W\in D([[\G]]), 
\]
and so $[\tau_U]$ is trivial in $[[\G]]_\ab$. 
Suppose that two $\G$-sets $U_1$ and $U_2$ satisfy 
$r(U_i)\cap s(U_i)=\emptyset$ and 
$[1_{s(U_1)}]=[1_{s(U_2)}]$ in $H_0(\G)\otimes\Z_2$. 
We would like to prove that 
$\tau_{U_1}$ and $\tau_{U_2}$ have the same equivalence class in $[[\G]]_\ab$. 
In view of Lemma \ref{H_0mod2}, we can find clopen $\G$-sets $V_i\subset U_i$ 
such that $[1_{s(U_i)}]=[1_{s(V_i)}]$ in $H_0(\G)\otimes\Z_2$ and 
$[1_{s(V_1)}]=[1_{s(V_2)}]$ in $H_0(\G)$. 
Since the equivalence class of 
$1_{s(U_i\setminus V_i)}=1_{s(U_i)\setminus s(V_i)}$ is trivial 
in $H_0(\G)\otimes\Z_2$, 
by what we have shown above, 
$\tau_{U_i\setminus V_i}=\tau_{U_i}\tau_{V_i}$ belongs to 
the commutator subgroup $D([[\G]])$. 
By Theorem \ref{almstfin} (3), 
there exists a compact open $\G$-set $V'$ 
such that $s(V')=s(V_1)$ and $r(V')=s(V_2)$. 
Hence, in the same way as above, 
one can show that $\tau_{V_1}$ is conjugate to $\tau_{V_2}$. 
Therefore, 
we get $[\tau_{U_1}]=[\tau_{V_1}]=[\tau_{V_2}]=[\tau_{U_2}]$ in $[[\G]]_\ab$. 

We define 
the homomorphism $j:H_0(\G)\otimes\Z_2\to[[\G]]_\ab$ as follows. 
Let $f\in C(\G^{(0)},\Z)$. 
Using Lemma \ref{H_0mod2}, 
one can find a non-empty compact open $\G$-set $U$ 
such that $r(U)\cap s(U)=\emptyset$ and 
$[f]=[1_{s(U)}]$ in $H_0(\G)\otimes\Z_2$. 
We set $j([f])=[\tau_U]$. 
By the claim above, the map $j$ is well-defined. 
Moreover, for any $f_1,f_2\in C(\G^{(0)},\Z)$, 
we may choose the $\G$-sets $U_1,U_2$ 
so that $(r(U_1)\cup s(U_1))\cap(r(U_2)\cup s(U_2))=\emptyset$. 
Then 
\begin{align*}
j([f_1]+[f_2])&=j([f_1+f_2])=[\tau_{U_1\cup U_2}]\\
&=[\tau_{U_1}\tau_{U_2}]=[\tau_{U_1}]+[\tau_{U_2}]\\
&=j([f_1])+j([f_2]), 
\end{align*}
which means that $j$ is a homomorphism. 

Let us prove that $\Ker I$ is contained in the image of $j$. 
Assume that $\gamma\in[[\G]]$ is in $\Ker I=[[\G]]_0$. 
By Theorem \ref{almstfin} (4), 
$\gamma$ is a product of transpositions. 
Consequently, 
the equivalence class of $\gamma$ is contained in the image of $j$. 
\end{proof}

In \cite[Lemma 6.3]{M12PLMS}, 
it was shown that 
when $\phi:\Z^N\curvearrowright X$ is a free action of $\Z^N$ 
on a Cantor set $X$, 
the transformation groupoid $\G_\phi$ is almost finite. 
When $N=1$ and $\phi:\Z\curvearrowright X$ is minimal, 
it is well known that 
$H_0(\G_\phi)\cong K_0(C^*_r(\G_\phi))$, 
$H_1(\G_\phi)\cong K_1(C^*_r(\G_\phi))\cong\Z$ 
and $H_n(\G_\phi)=0$ for $n\geq2$. 
Hence, Conjecture \ref{HK} holds for $\G_\phi$. 
Furthermore, the strong AH property also holds 
(\cite[Theorem 4.8]{M06IJM}). 
When $N$ is greater than one, 
it is not known whether Conjecture \ref{HK} is true for $\G_\phi$. 
We remark that the Chern character implies the isomorphisms 
\[
\bigoplus_{i=0}^\infty H_{2i}(\G_\phi)\otimes\Q
\cong K_0(C^*_r(\G_\phi))\otimes\Q
\]
and 
\[
\bigoplus_{i=0}^\infty H_{2i+1}(\G_\phi)\otimes\Q
\cong K_1(C^*_r(\G_\phi))\otimes\Q
\]
(see \cite[Section 4]{FH99ETDS}, \cite[Section 3.1]{M12PLMS}). 

By Theorem \ref{almstfin>AH}, 
the AH conjecture holds for $\G_\phi$. 
But, we do not know 
whether the groupoid $\G_\phi$ has the strong AH property. 

In general, it is not known 
if every almost finite groupoid $\G$ satisfies the strong AH property or not. 
In other words, we cannot prove that 
the homomorphism $j:H_0(\G)\otimes\Z_2\to[[\G]]_\ab$ is always injective, 
and cannot find an example such that the map $j$ has nontrivial kernel. 
(For purely infinite groupoids, we know that $j$ is not always injective. 
See \cite[Example 7.1]{Ne15} and Section 5.5 of the present paper.) 
However, for a minimal free action $\phi:\Z^N\curvearrowright X$, 
one can prove that the kernel of $j$ is at least `contained' 
in the infinitesimal subgroup of $H_0(\G_\phi)$ 
(Proposition \ref{signature}). 

Let us recall the definition of the de la Harpe-Skandalis determinant 
from \cite{HS84AnnInst}. 
Let $A$ be a unital $C^*$-algebra and 
let $T(A)$ be the space of tracial states on $A$. 
The de la Harpe-Skandalis determinant is a homomorphism 
\[
\Delta:U(A)_0\to\Aff(T(A))/D_A(K_0(A)), 
\]
where $U(A)_0$ is the connected component of the identity 
in the unitary group $U(A)$ of $A$ and 
$D_A:K_0(A)\to\Aff(T(A))$ is the homomorphism defined by 
$D_A([p])(\sigma)=\sigma(p)$. 
For $u\in U(A)_0$, $\Delta(u)$ is defined as follows. 
Let $\xi:[0,1]\to U(A)_0$ be a piecewise smooth path 
such that $\xi(0)=1$ and $\xi(1)=u$. 
Then the function 
\[
T(A)\ni\sigma\mapsto
\frac{1}{2\pi\sqrt{-1}}\int_0^1\sigma(\xi'(t)\xi(t)^*)\,dt\in\R
\]
belongs to $\Aff(T(A))$, 
and $\Delta(u)$ is defined as the equivalence class of 
this affine function in $\Aff(T(A))/D_A(K_0(A))$. 
It is known that $\Delta$ is a well-defined homomorphism. 

Let $\G$ be an essentially principal \'etale groupoid 
whose unit space is a Cantor set. 
For any $\mu\in M(\G)$, 
there exists a tracial state $\sigma_\mu$ on $C^*_r(\G)$ 
such that $\sigma_\mu(1_C)=\mu(C)$ for every clopen set $C\subset\G^{(0)}$. 
The map $\mu\mapsto\sigma_\mu$ gives an isomorphism 
between $M(\G)$ and $T(C^*_r(\G))$. 
The infinitesimal subgroup of $H_0(\G)$ is defined by 
\[
\Inf(H_0(\G))
=\left\{[f]\in H_0(\G)\mid\int f\,d\mu=0\quad\forall\mu\in M(\G)\right\}. 
\]

\begin{proposition}\label{signature}
Let $\phi:\Z^N\curvearrowright X$ be a minimal free action of $\Z^N$ 
on a Cantor set $X$. 
The kernel of the homomorphism $j:H_0(\G_\phi)\otimes\Z_2\to[[\G_\phi]]_\ab$ 
is contained in $\Inf(H_0(\G_\phi))\otimes\Z_2$. 
In particular, when $\Inf(H_0(\G_\phi))\otimes\Z_2$ is trivial, 
$\G_\phi$ has the strong AH property. 
\end{proposition}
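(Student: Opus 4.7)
The plan is to apply the de la Harpe--Skandalis determinant $\Delta$ to the unitary $u_{\tau_U}\in C^*_r(\G_\phi)$ arising from a suitable transposition, and then transfer the resulting $K_0$-level conclusion back to $H_0$ via the rational Chern character available for $\Z^N$-actions.

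Given $[f]\otimes\bar 1\in\ker j$, arguing exactly as in the proof of Theorem \ref{almstfin>AH} (using Lemma \ref{H_0mod2}) I find a compact open $\G_\phi$-set $U$ with $r(U)\cap s(U)=\emptyset$ such that $[1_{s(U)}]=[f]$ in $H_0(\G_\phi)\otimes\Z_2$ and $j([f]\otimes\bar 1)=[\tau_U]$ in $[[\G_\phi]]_\ab$. Since this class vanishes, $\tau_U\in D([[\G_\phi]])$, and it suffices to prove $[1_{s(U)}]\in\Inf(H_0(\G_\phi))+2H_0(\G_\phi)$.

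The unitary $u:=u_{\tau_U}=1_V\in C^*_r(\G_\phi)$ is self-adjoint, so $u=1-2p$ where $p=\tfrac12(1_{s(U)}+1_{r(U)}-1_U-1_{U^{-1}})$ is a projection satisfying $\sigma_\mu(p)=\mu(s(U))$ by the $\G_\phi$-invariance of $\mu$. Along the piecewise smooth path $\xi(t)=(1-p)+e^{i\pi t}p$ from $1$ to $u$ inside $U_0(C^*_r(\G_\phi))$, the identity $\xi'(t)\xi(t)^*=i\pi p$ yields
\[
\Delta(u)(\sigma_\mu)=\frac{1}{2\pi i}\int_0^1\sigma_\mu(\xi'(t)\xi(t)^*)\,dt=\tfrac12\mu(s(U)).
\]
Because $\tau_U\in D([[\G_\phi]])$, the image $u$ is a product of commutators of unitaries in $C^*_r(\G_\phi)$, so $\Delta(u)=0$ in $\Aff(T(C^*_r(\G_\phi)))/D_A(K_0(C^*_r(\G_\phi)))$. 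Consequently there is $[q]\in K_0(C^*_r(\G_\phi))$ with $\sigma_\mu([q])=\mu(s(U))/2$ for every $\mu\in M(\G_\phi)$; equivalently, $[1_{s(U)}]-2[q]\in\Inf(K_0(C^*_r(\G_\phi)))$ under the natural map $H_0(\G_\phi)\to K_0(C^*_r(\G_\phi))$.

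The main obstacle is to transfer this $K_0$-level statement back to $H_0$. For this I appeal to the rational decomposition $K_0(C^*_r(\G_\phi))\otimes\Q\cong\bigoplus_iH_{2i}(\G_\phi)\otimes\Q$ available for free minimal $\Z^N$-actions (recorded at the end of Section 3), together with the fact that tracial states factor through the $H_0$-component of the Chern character. This produces $\xi\in H_0(\G_\phi)\otimes\Q$ with $\hat\mu(\xi)=\mu(s(U))/2$ for all $\mu\in M(\G_\phi)$. Writing $\xi=[h]/n$ with $[h]\in H_0(\G_\phi)$ and $n\in\Z_{>0}$, one obtains $n[1_{s(U)}]-2[h]\in\Inf(H_0(\G_\phi))$, so $n[1_{s(U)}]\otimes\bar 1$ lies in the image of $\Inf(H_0(\G_\phi))\otimes\Z_2$. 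The delicate point is to exploit the freedom to modify $[q]$ by classes in $\Inf(K_0(C^*_r(\G_\phi)))$ (correspondingly $\xi$ by elements of $\Inf(H_0(\G_\phi))\otimes\Q$) so as to ensure that the denominator $n$ can be taken odd; once this is achieved, the invertibility of $n$ modulo $2$ yields $[1_{s(U)}]\otimes\bar 1\in\Inf(H_0(\G_\phi))\otimes\Z_2$, and hence $[f]\otimes\bar 1\in\Inf(H_0(\G_\phi))\otimes\Z_2$ by the choice of $U$.
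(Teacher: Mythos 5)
Your first two-thirds track the paper's argument closely: you reduce to a transposition $\tau_U$ lying in $D([[\G_\phi]])$, compute the de la Harpe--Skandalis determinant of the corresponding self-adjoint unitary along the path $\xi(t)=(1-p)+e^{i\pi t}p$ to get the affine function $\sigma_\mu\mapsto\tfrac12\mu(s(U))$, and conclude that this function lies in $D_A(K_0(A))$. (One secondary point you gloss over: $\Delta$ is only defined on $U(A)_0$ and only visibly kills commutators of elements of $U(A)_0$, whereas $D([[\G_\phi]])$ is generated by commutators of arbitrary elements of $[[\G_\phi]]$, whose lifts under $\rho$ need not lie in $U(A)_0$. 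The paper closes this by invoking the simplicity of $D([[\G_\phi]])$ to identify it with $D([[\G_\phi]]_0)$ and using property TR to see $\rho([[\G_\phi]]_0)\subset U(A)_0$.)

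The genuine gap is in your final transfer step. The paper at this point invokes the gap labeling theorem, $D_A(K_0(A))=D_A(K_0(C(X)))$, which immediately produces an \emph{integral} class $f\in C(X,\Z)$ with $\tfrac12\mu(s(U))=\int_X f\,d\mu$ for all $\mu$, and hence $[1_{s(U)}]-2[f]\in\Inf(H_0(\G_\phi))$ with no denominators anywhere. You replace this by the rational Chern character, which can only produce $\xi\in H_0(\G_\phi)\otimes\Q$ with $\hat\mu(\xi)=\tfrac12\mu(s(U))$, i.e.\ $n[1_{s(U)}]-2[h]\in\Inf(H_0(\G_\phi))$ for \emph{some} positive integer $n$ that you do not control. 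Since the whole statement is about a $\Z_2$-valued obstruction, everything hinges on $n$ being odd, and you offer no argument for this beyond noting that $[q]$ and $\xi$ may be adjusted by infinitesimals --- an adjustment that changes neither $\hat\mu(\xi)$ nor, in general, the denominator of the image of $\xi$ in the torsion-free quotient $(H_0(\G_\phi)/\Inf(H_0(\G_\phi)))\otimes\Q$. Controlling that denominator integrally is exactly the content of the gap labeling theorem (a deep result, with the three proofs cited in the paper); the rational Chern character discards precisely the $2$-primary information you need. In addition, the assertion that tracial states factor through the $H_0$-component of the Chern character is itself a nontrivial (rational gap labeling type) fact that you use without justification. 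As written, the argument does not close; the fix is to use the integral gap labeling theorem as the paper does.
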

\begin{proof}
Because $\G_\phi$ is principal, minimal and almost finite, 
Conjecture \ref{AH} holds for $\G_\phi$ by Theorem \ref{almstfin>AH}. 
Put $A=C^*_r(\G_\phi)$. 

Let $U\subset\G_\phi$ be a compact open $\G_\phi$-set 
such that $r(U)\cap s(U)=\emptyset$. 
Set 
\[
V=U\cup U^{-1}\cup(\G_\phi^{(0)}\setminus(r(U)\cup s(U))), 
\]
so that $\theta(V)\in[[\G_\phi]]$ is the transposition 
corresponding to $[1_{s(U)}]$. 
Suppose that $\theta(V)$ is in $D([[\G_\phi]])$. 
It suffices to show that 
the equivalence class of $1_{s(U)}$ is in $\Inf(H_0(\G_\phi))\otimes\Z_2$. 

By \cite[Proposition 5.6]{M12PLMS}, there exists a short exact sequence 
\[
\begin{CD}
1@>>>U(C(X))@>>>N(C(X),A)@>>>[[\G_\phi]]@>>>1, 
\end{CD}
\]
where $N(C(X),A)$ denotes 
the group of unitary normalizers of $C(X)$ in $U(A)$. 
Moreover, 
the homomorphism $N(C(X),A)\to[[\G_\phi]]$ has the right inverse 
$\rho:[[\G_\phi]]\to N(C(X),A)$ defined by $\rho(\theta(W))=1_W$. 
By Theorem \ref{almstfin} (4), $\G_\phi$ has property TR. 
Hence any element of $[[\G_\phi]]_0$ is a product of transpositions, 
and so we get $\rho([[\G_\phi]]_0)\subset U(A)_0$. 
Thus 
\[
\Delta\circ\rho:[[\G_\phi]]_0\to\Aff(T(A))/D_A(K_0(A))
\]
is a well-defined homomorphism. 
Thanks to \cite[Theorem 4.7]{M15crelle}, 
we can conclude that $D([[\G_\phi]])$ is contained 
in the kernel of $\Delta\circ\rho$. 
Therefore we obtain $0=\Delta(\rho(\theta(V)))=\Delta(1_V)$. 
The unitary $1_V\in U(A)_0$ is clearly conjugate to 
the unitary $u=-1_{s(U)}+(1-1_{s(U)})\in U(C(X))$. 
Define $\xi:[0,1]\to U(A)_0$ 
by $\xi(t)=e^{\pi\sqrt{-1}t}1_{s(U)}+(1-1_{s(U)})$ 
so that $\xi(0)=1$ and $\xi(1)=u$. 
One has 
\[
\frac{1}{2\pi\sqrt{-1}}\int_0^1\sigma_\mu(\xi'(t)\xi(t)^*)\,dt
=\frac{1}{2}\mu(s(U)). 
\]
for all $\mu\in M(\G)$. 
It follows that 
the affine function $\sigma_\mu\mapsto\mu(s(U))/2$ belongs to 
the image of the dimension map $D_A$. 
The gap labeling theorem (\cite{BBG06CMP,BO03,KP03Michigan}) tells us that 
$D_A(K_0(A))$ equals $D_A(K_0(C(X)))$. 
As a result, there exists $f\in C(X,\Z)$ such that 
\[
\frac{1}{2}\mu(s(U))=\int_Xf\,d\mu
\]
for all $\mu\in M(\G)$. 
Thus, $[1_{s(U)}]-2[f]$ is in the infinitesimal subgroup $\Inf(H_0(\G_\phi))$, 
and whence $[1_{s(U)}]\otimes\bar{1}$ is in $\Inf(H_0(\G_\phi))\otimes\Z_2$. 
\end{proof}

\begin{remark}
Let $\G$ be a principal, minimal, almost finite groupoid. 
It is natural to ask if the gap labeling theorem holds for $\G$. 
Put $A=C^*_r(\G)$ and consider $D_A:K_0(A)\to\Aff(T(A))$. 
Let $\iota:C(\G^{(0)})\to A$ be the inclusion map. 
Evidently we have $D_A(\iota_*(K_0(C(\G^{(0)})))\subset D_A(K_0(A))$. 
The gap labeling asks whether or not the other inclusion holds. 
If this is the case, the proposition above is true for $\G$. 
\end{remark}

\section{Purely infinite groupoids}

In this section we discuss purely infinite groupoids. 

\begin{definition}[{\cite[Definition 4.9]{M15crelle}}]
Let $\G$ be an essentially principal \'etale groupoid 
whose unit space is a Cantor set. 
\begin{enumerate}
\item A clopen set $A\subset\G^{(0)}$ is said to be properly infinite 
if there exist compact open $\G$-sets $U,V\subset\G$ 
such that $s(U)=s(V)=A$, $r(U)\cup r(V)\subset A$ 
and $r(U)\cap r(V)=\emptyset$. 
\item We say that $\G$ is purely infinite 
if every clopen set $A\subset\G^{(0)}$ is properly infinite. 
\end{enumerate}
\end{definition}

It is easy to see that if $\G$ is purely infinite, then 
for any clopen set $A\subset\G^{(0)}$ 
the reduction $\G|A$ is also purely infinite. 
It is also clear that if $\G^{(0)}$ is properly infinite, 
then there does not exist a $\G$-invariant probability measure. 
When $\G^{(0)}$ is properly infinite, 
the topological full group $[[\G]]$ contains 
a subgroup isomorphic to the free product $\Z_2*\Z_3$ 
(\cite[Proposition 4.10]{M15crelle}). 
In particular, $[[\G]]$ is not amenable. 

\begin{theorem}\label{pi}
Let $\G$ be a purely infinite groupoid. 
\begin{enumerate}
\item If $\G$ is minimal, then $D([[\G]])$ is simple. 
\item The index map $I:[[\G]]\to H_1(\G)$ is surjective. 
\item If $\G$ is minimal, then $\G$ has cancellation. 
\end{enumerate}
\end{theorem}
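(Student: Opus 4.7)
The plan is the following. Statement (1) is immediate from the remarks following Definition \ref{defoftfg}: Nekrashevych proved that $\A(\G)$ is simple for every minimal $\G$, and also that $\A(\G) = D([[\G]])$ whenever $\G$ is minimal and purely infinite, so (1) is a direct quotation. The real work is (2) and (3), and both rely on systematically exploiting pure infiniteness to perform Cuntz-type manipulations entirely at the level of compact open $\G$-sets.

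For (3), let $A_1, A_2 \subset \G^{(0)}$ be clopen with $[1_{A_1}] = [1_{A_2}]$ in $H_0(\G)$; the goal is to produce a single compact open $\G$-set $U$ with $s(U) = A_1$ and $r(U) = A_2$. Unwinding the homology identity yields compact open $\G$-sets $V_1, \dots, V_n$ and signs $\ep_i \in \{\pm 1\}$ satisfying $1_{A_2} - 1_{A_1} = \sum_i \ep_i(1_{s(V_i)} - 1_{r(V_i)})$. I would view this as a combinatorial flow on clopen pieces of $\G^{(0)}$ and simplify it inductively. At each step, pure infiniteness supplies, for any clopen $C$, compact open $\G$-sets $P, Q$ with $s(P) = s(Q) = C$, $r(P) \cap r(Q) = \emptyset$, $r(P) \cup r(Q) \subseteq C$, so that $C$ is ``equivalent to two disjoint copies of itself'', and minimality supplies compact open $\G$-sets between any two nonempty clopens. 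Concatenating these moves with the $V_i$'s and their inverses along a spanning tree of the source/range graph produces the required single bisection $U$. This parallels the $C^*$-algebraic fact that projections with equal $K_0$-class in a purely infinite simple unital $C^*$-algebra are Murray--von Neumann equivalent, translated to the purely combinatorial setting of $\G$-sets.

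For (2), take a cycle $f \in C_c(\G, \Z)$ and refine it to the form $f = 1_V - 1_W$ with $V, W$ disjoint compact open $\G$-sets. The cycle condition reads $1_{s(V)} + 1_{r(W)} = 1_{r(V)} + 1_{s(W)}$. Using (3) and pure infiniteness I would replace $V$ and $W$ by homologous bisections arranged so that $s(V) \cap r(W) = \emptyset$ and $r(V) \cap s(W) = \emptyset$; then $U_0 = V \cup W^{-1}$ is a compact open $\G$-set and the cycle identity forces $s(U_0) = r(U_0)$. Extending by the diagonal on $\G^{(0)} \setminus s(U_0)$ yields a compact open $\G$-set $U$ with $s(U) = r(U) = \G^{(0)}$. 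The composable-pair boundary relation gives $[1_W] + [1_{W^{-1}}] = [1_{r(W)}] = 0$ in $H_1(\G)$, the second equality coming from the fact that characteristic functions of clopen subsets of $\G^{(0)}$ are degenerate cycles; hence $[1_{W^{-1}}] = -[1_W]$ and consequently $[1_U] = [1_V] - [1_W] = [f]$. Then $\alpha = \theta(U) \in [[\G]]$ satisfies $I(\alpha) = [f]$.

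The main obstacle is the combinatorial construction in (3): translating an abstract homological identity in $H_0(\G)$ into a single honest bisection requires repeated use of pure infiniteness to resolve overlaps among the $V_i$'s and careful bookkeeping of sources and ranges at each reduction. Once (3) is secured, (2) reduces to the bookkeeping sketched above, and (1) is by citation.
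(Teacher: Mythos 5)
Your part (1) is fine as a quotation (the paper itself simply cites \cite[Theorem 4.16]{M15crelle} rather than the Nekrashevych route, but either citation is legitimate), and the homological bookkeeping at the end of your (2) --- $[1_{W^{-1}}]=-[1_W]$ via the boundary of $1_{\{(g,g^{-1})\,\mid\,g\in W\}}$ together with the degeneracy of $1_{r(W)}$ --- is correct. The trouble is that the two steps you defer are precisely the content of the theorem. For (3), which is the only part the paper proves rather than cites, your outline stops where the work begins. The identity $1_{A_2}-1_{A_1}=\sum_i\ep_i(1_{s(V_i)}-1_{r(V_i)})$ involves sets that overlap \emph{with multiplicity}, and the obstruction is not connectivity (so a ``spanning tree of the source/range graph'' is not the right device) but resolving those multiplicities into honest disjoint clopen subsets of $\G^{(0)}$. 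The paper does this by first reserving, via minimality, a bisection $W$ with $s(W)\subset A_1$ and $r(W)\subset A_2$; forming the multiplicity function $f=1_{A'}+\sum_i 1_{s(U_i)}=1_{B'}+\sum_i 1_{r(U_i)}$ with maximum value $m$; using pure infiniteness and minimality to build a tower $V_1,\dots,V_m$ with $s(V_j)=\{x\mid f(x)\geq j\}$ and pairwise disjoint ranges; transporting each witness $U_i$ into the tower through the level sets $C_{i,j},D_{i,j}$ of the partial sums, which produces a single bisection $\widetilde V=\bigcup V_lD_{i,l}U_iC_{i,k}V_k^{-1}$; and finally splicing back to $A_1$ and $A_2$ through the reserved $W$. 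None of this is implied by the general principle you invoke: your proposed induction has no specified inductive step and no reason to terminate, so as written (3) is not proved.

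For (2) (which the paper does not prove either, citing \cite[Theorem 5.2]{M15crelle}), there are two concrete gaps. First, a general cycle is $\sum_i n_i1_{U_i}$ with overlapping $U_i$, and reducing it, even modulo boundaries, to the form $1_V-1_W$ with $V,W$ \emph{single} compact open $\G$-sets already requires the same multiplicity-resolving machinery as (3); the support of the positive part need not be a $\G$-set and the function may exceed $1$. Second, and more seriously, your appeal to (3) to replace $V$ and $W$ by ``homologous bisections'' with prescribed disjointness does not work as stated: cancellation produces a compact open $\G$-set with prescribed source and range but gives no control on its class in $H_1(\G)$. Two bisections $V,V'$ with the same source and range differ by the bisection $V'V^{-1}$, which has equal source and range and whose class in $H_1(\G)$ need not vanish; so the replacement can change $[1_V]$ and the resulting $\alpha=\theta(U)$ need not satisfy $I(\alpha)=[f]$. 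To repair this you would have to track the $H_1$ classes of all connecting bisections explicitly (or show the error terms can be corrected afterwards), and that argument is absent from the sketch.
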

\begin{proof}
(1) is \cite[Theorem 4.16]{M15crelle}. 
(2) is \cite[Theorem 5.2]{M15crelle}. 

Let us prove (3). 
Suppose that 
two clopen sets $A,B\subset\G^{(0)}$ satisfy $[1_A]=[1_B]$ in $H_0(\G)$. 
Since $\G$ is minimal, 
there exists a compact open $\G$-set $W\subset\G$ 
such that $r(W)\subset B$ and $s(W)\subset A$. 
Let $A'=A\setminus s(W)$ and $B'=B\setminus r(W)$. 
Then, $[1_{A'}]=[1_A]-[1_{s(W)}]=[1_B]-[1_{r(W)}]=[1_{B'}]$. 
Hence there exist compact open $\G$-sets $U_1,U_2,\dots,U_n$ 
such that 
\[
1_{A'}-1_{B'}=\sum_{i=1}^n\left(1_{r(U_i)}-1_{s(U_i)}\right). 
\]
Put 
\[
f=1_{A'}+\sum_{i=1}^n1_{s(U_i)}=1_{B'}+\sum_{i=1}^n1_{r(U_i)}. 
\]
Let $m=\max\{f(x)\mid x\in\G^{(0)}\}$. 
We can choose compact open $\G$-sets $V_1,V_2,\dots,V_m$ so that 
\[
s(V_j)=\{x\in\G^{(0)}\mid j\leq f(x)\leq m\}
\]
and $r(V_j)$ are mutually disjoint, 
because $\G$ is purely infinite and minimal. 
Let $E=\bigcup_lr(V_l)$. 

For $i=1,2,\dots,n$ and $j=1,2,\dots,m$, 
we define clopen sets $C_{i,j}$ and $D_{i,j}$ by 
\[
C_{i,j}=\left\{x\in s(U_i)\mid
\left(1_{A'}+\sum_{k=1}^i1_{s(U_k)}\right)(x)=j\right\}
\]
and 
\[
D_{i,j}=\left\{x\in r(U_i)\mid
\left(1_{B'}+\sum_{k=1}^i1_{r(U_k)}\right)(x)=j\right\}. 
\]
For any $i$, it is easy to see that 
$\{C_{i,j}\mid j=1,2,\dots,m\}$ is a clopen partition of $s(U_i)$ and 
that $\{D_{i,j}\mid j=1,2,\dots,m\}$ is a clopen partition of $r(U_i)$. 
For any $j$, we can verify 
$C_{i,j}\cap C_{k,j}=D_{i,j}\cap D_{k,j}=\emptyset$ whenever $i\neq k$. 
Moreover, 
\[
\bigcup_{i=1}^nC_{i,j}
=\begin{cases}s(V_1)\setminus A'&j=1\\
s(V_j)&j>1\end{cases}
\]
and 
\[
\bigcup_{i=1}^nD_{i,j}
=\begin{cases}r(V_1)\setminus B'&j=1\\
r(V_j)&j>1. \end{cases}
\]
Consider 
\[
\widetilde{V}=\bigcup_{i,k,l}V_lD_{i,l}U_iC_{i,k}V_k^{-1}. 
\]
Evidently $\widetilde{V}$ is a $\G$-set, and 
\begin{align*}
\widetilde{V}\widetilde{V}^{-1}
&=\left(\bigcup_{i,k,l}V_lD_{i,l}U_iC_{i,k}V_k^{-1}\right)
\left(\bigcup_{j,p,q}V_pC_{j,p}U_j^{-1}D_{j,q}V_q^{-1}\right)\\
&=\bigcup_{i,j,k,l,q}V_lD_{i,l}U_iC_{i,k}s(V_k)C_{j,k}U_j^{-1}D_{j,q}V_q^{-1}\\
&=\bigcup_{i,k,l,q}V_lD_{i,l}U_iC_{i,k}U_i^{-1}D_{i,q}V_q^{-1}\\
&=\bigcup_{i,l,q}V_lD_{i,l}U_is(U_i)U_i^{-1}D_{i,q}V_q^{-1}\\
&=\bigcup_{i,l,q}V_lD_{i,l}r(U_i)D_{i,q}V_q^{-1}\\
&=\bigcup_{i,l}V_lD_{i,l}V_l^{-1}\\
&=\left(\bigcup_lr(V_l)\right)\setminus V_1B'V_1^{-1}
=E\setminus V_1B'V_1^{-1}. 
\end{align*}
Similarly, we obtain 
$\widetilde{V}^{-1}\widetilde{V}=E\setminus V_1A'V_1^{-1}$. 
Since $\G$ is purely infinite and minimal, 
there exists a compact open $\G$-set $T$ 
such that $s(T)\subset s(W)$ and $r(T)=s(\tilde V)$. 
Define a compact open $\G$-set $U$ by 
\[
U=(B'V_1^{-1}\cup WT^{-1}\widetilde V^{-1})(V_1A'\cup T). 
\]
Then we get 
\begin{align*}
U^{-1}U
&=(A'V_1^{-1}\cup T^{-1})
(V_1B'V_1^{-1}\cup r(\widetilde V))(V_1A'\cup T)\\
&=(A'V_1^{-1}\cup T^{-1})E(V_1A'\cup T)\\
&=A'\cup s(T)
\end{align*}
and 
\begin{align*}
UU^{-1}
&=(B'V_1^{-1}\cup WT^{-1}\widetilde V^{-1})
(V_1A'V_1^{-1}\cup s(\widetilde V))(V_1B'\cup \widetilde VTW^{-1})\\
&=(B'V_1^{-1}\cup WT^{-1}\widetilde V^{-1})E
(V_1B'\cup \widetilde VTW^{-1})\\
&=B'\cup Ws(T)W^{-1}. 
\end{align*}
Therefore, 
$\widetilde U=U\cup W(s(W)\setminus s(T))$ is a compact open $\G$-set 
satisfying $s(\widetilde U)=A$ and $r(\widetilde U)=B$. 
\end{proof}

\begin{lemma}\label{H_0ofpi}
Let $\G$ be a minimal purely infinite groupoid. 
Let $f\in C(\G^{(0)},\Z)$. 
For any non-empty clopen set $A\subset\G^{(0)}$, 
there exists a clopen subset $B\subset A$ 
such that $f$ and $1_B$ have the same equivalence class in $H_0(\G)$. 
\end{lemma}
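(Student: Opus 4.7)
The proof assembles from three ingredients: a positive-transport lemma, a negativity trick, and a combinatorial assembly.

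\emph{Step 1 (positive transport).} I first prove that for any clopen $C\subset\G^{(0)}$ and any non-empty clopen $A\subset\G^{(0)}$ there is a clopen $C'\subset A$ with $[1_{C'}]=[1_C]$ in $H_0(\G)$. By minimality, each $x\in C$ is the source of some element of $\G$ whose range lies in $A$, which extends to a compact open $\G$-set $W_x$ with $x\in s(W_x)\subset C$ and $r(W_x)\subset A$. Compactness of $C$ and a refinement argument yield finitely many compact open $\G$-sets $W_1,\dots,W_n$ with $\{s(W_i)\}$ a clopen partition of $C$ and each $r(W_i)\subset A$. The $r(W_i)$ may overlap; to separate them, I iterate the definition of proper infiniteness inside $A$ to produce pairwise disjoint clopens $A_1,\dots,A_n\subset A$ together with compact open $\G$-sets $T_i$ satisfying $s(T_i)=A$ and $r(T_i)=A_i$ (starting from $A$, then halving one of the resulting pieces, and composing $\G$-sets along the way). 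Then $T_iW_i$ is a compact open $\G$-set with source $s(W_i)$ and range in $A_i$, so $W:=\bigsqcup_i T_iW_i$ is a compact open $\G$-set with $s(W)=C$ and $r(W)\subset A$, and $C':=r(W)$ does the job.

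\emph{Step 2 (negativity trick).} For any non-empty clopen $D$ I produce a clopen $E\subset D$ with $[1_E]=-[1_D]$. Pure infiniteness makes $D$ properly infinite, giving compact open $\G$-sets $U,V$ with $s(U)=s(V)=D$, $r(U),r(V)\subset D$, and $r(U)\cap r(V)=\emptyset$. Then $[1_{r(U)}]=[1_{r(V)}]=[1_D]$, and the clopen decomposition $D=r(U)\sqcup r(V)\sqcup E$ with $E:=D\setminus(r(U)\cup r(V))$ yields $[1_D]=2[1_D]+[1_E]$, hence $[1_E]=-[1_D]$.

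\emph{Step 3 (assembly).} Write $f=\sum_{i=1}^k n_i 1_{A_i}$ for a clopen partition $\{A_i\}$ of $\G^{(0)}$ into non-empty pieces, with $n_i\in\Z$. Since $A$ has no isolated points, partition $A$ into $N:=\sum_i|n_i|$ non-empty disjoint clopen pieces $A^{(i,j)}$ (with $N=0$ giving $B=\emptyset$). For each $(i,j)$ with $n_i>0$, apply Step 1 to find $B_{i,j}\subset A^{(i,j)}$ with $[1_{B_{i,j}}]=[1_{A_i}]$; for each $(i,j)$ with $n_i<0$, apply Step 1 to locate a clopen $A'_{i,j}\subset A^{(i,j)}$ with $[1_{A'_{i,j}}]=[1_{A_i}]$ and then Step 2 inside $A'_{i,j}$ to obtain $B_{i,j}\subset A'_{i,j}$ with $[1_{B_{i,j}}]=-[1_{A_i}]$. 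Setting $B:=\bigsqcup_{i,j}B_{i,j}\subset A$ then gives $[1_B]=\sum_i n_i[1_{A_i}]=[f]$.

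The principal difficulty is Step 1, specifically routing the ranges $r(W_i)$ into disjoint regions of $A$ via the iterated proper-infiniteness construction of the $T_i$'s; the remaining steps are essentially routine arithmetic in $H_0(\G)$ once this routing is in hand.
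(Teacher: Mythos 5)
Your argument is correct. Note, though, that the paper does not actually prove this lemma from scratch: its entire proof is a citation of \cite[Lemma 5.3, Proposition 4.11]{M15crelle}, and your Steps 1 and 2 are in effect reconstructions of the content of those two results. Step 1 (using minimality to route a clopen set into $A$, and iterated proper infiniteness to manufacture $n$ pairwise disjoint ``copies'' $A_1,\dots,A_n\subset A$ receiving the pieces $T_iW_i$) is exactly the comparison-type statement that \cite[Proposition 4.11]{M15crelle} packages; Step 2 is the standard observation that $[1_D]=2[1_D]+[1_E]$ forces $[1_E]=-[1_D]$, which is where pure infiniteness makes every class, including negatives of positive ones, realizable by a clopen set; and Step 3 is the routine assembly using that a non-empty clopen subset of a Cantor set splits into any finite number of non-empty clopen pieces. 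All the small points check out: $s(T_iW_i)=s(W_i)$ because $r(W_i)\subset A=s(T_i)$; the identity $[1_{s(U)}]=[1_{r(U)}]$ follows from $\partial_1(1_U)=1_{s(U)}-1_{r(U)}$; and the degenerate cases ($E=\emptyset$ in Step 2, $f=0$ in Step 3) cause no harm. So what you have written is a legitimate self-contained proof where the paper defers to external references; the trade-off is length versus transparency, and your version makes visible exactly which hypotheses (minimality for transport, pure infiniteness for disjoint duplication and for negation) are used where.
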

\begin{proof}
This immediately follows 
from \cite[Lemma 5.3, Proposition 4.11]{M15crelle}. 
\end{proof}

In the same way as Theorem \ref{almstfin>AH} 
we can prove the following. 

\begin{theorem}\label{pi>AH}
Let $\G$ be a minimal purely infinite groupoid. 
If $\G$ has property TR, then 
Conjecture \ref{AH} holds for $\G$. 
\end{theorem}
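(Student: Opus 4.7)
The plan is to follow the proof of Theorem \ref{almstfin>AH} essentially verbatim, with the almost-finiteness inputs replaced by their purely-infinite counterparts. The surjectivity of $I:[[\G]]\to H_1(\G)$ is already provided by Theorem \ref{pi} (2), so the task reduces to constructing a homomorphism $j:H_0(\G)\otimes\Z_2\to[[\G]]_\ab$ whose image contains $\Ker I=[[\G]]_0$, and whose composition with the description of the AH map (via disjoint transpositions) produces the canonical map.

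First, for any compact open $\G$-set $U$ with $r(U)\cap s(U)=\emptyset$ I would define the transposition $\tau_U\in[[\G]]_0$ exactly as in the proof of Theorem \ref{almstfin>AH}, and show that its class $[\tau_U]\in[[\G]]_\ab$ depends only on $[1_{s(U)}]\in H_0(\G)\otimes\Z_2$. The ``trivial class'' step uses Lemma \ref{H_0ofpi}: if $[1_{s(U)}]=2[h]$ in $H_0(\G)$, then Lemma \ref{H_0ofpi} applied with $A=s(U)$ furnishes a clopen $B\subset s(U)$ with $[1_B]=[h]$, hence $[1_{s(U)\setminus B}]=[1_B]$; combining with Theorem \ref{pi} (3) (cancellation) to realise this equality by a compact open $\G$-set, the commutator identity $\tau_U=\tau_{U_1}\tau_{U_2}=\tau_{U_1}\tau_W\tau_{U_1}\tau_W$ goes through word for word. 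The conjugacy step (two $\G$-sets with equal $H_0\otimes\Z_2$ classes give conjugate, hence equivalent, $\tau$'s) is again driven by cancellation (Theorem \ref{pi} (3)) together with Lemma \ref{H_0ofpi}, in the same form as in Theorem \ref{almstfin>AH}.

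Once $[\tau_U]$ is seen to depend only on $[1_{s(U)}]\otimes\bar1$, I would define $j([f])=[\tau_U]$ where, for a given $f\in C(\G^{(0)},\Z)$, Lemma \ref{H_0ofpi} is used to choose a compact open $\G$-set $U$ with $r(U)\cap s(U)=\emptyset$ and $[1_{s(U)}]=[f]$ in $H_0(\G)$ (purely infinite minimal groupoids easily provide such $U$ supported on any prescribed clopen set, disjoint from its range). Additivity of $j$ is checked by choosing representatives $U_1,U_2$ with pairwise disjoint ranges and sources, so that $\tau_{U_1\cup U_2}=\tau_{U_1}\tau_{U_2}$.

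Finally, to prove $\Ker I\subset\Ima j$, I would invoke property TR directly: any $\gamma\in[[\G]]_0$ is a product of transpositions, and every transposition is of the form $\tau_U$ for some compact open $\G$-set $U$ with $r(U)\cap s(U)=\emptyset$, hence its class in $[[\G]]_\ab$ lies in the image of $j$. I expect the main obstacle to be verifying the ``trivial class implies commutator'' step in the purely infinite setting, since the almost-finite argument tacitly uses the halving machinery from \cite[Section 6.2]{M12PLMS}; here the substitute is Lemma \ref{H_0ofpi} combined with the cancellation from Theorem \ref{pi} (3), and I would double-check that the resulting identification $U=U_1\sqcup U_2$ with $[1_{s(U_1)}]=[1_{s(U_2)}]$ in $H_0(\G)$ can really be arranged inside $U$ (rather than merely up to $H_0$-equivalence), which requires replacing the cancellation-produced $\G$-set $V$ by a suitable reduction. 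Once this is in place, the rest of the argument is a direct translation of the almost-finite proof.
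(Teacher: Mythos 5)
Your proposal is correct and follows exactly the route the paper takes: the paper's proof of Theorem \ref{pi>AH} consists precisely of the remark that the argument for Theorem \ref{almstfin>AH} goes through with Theorem \ref{almstfin} and Lemma \ref{H_0mod2} replaced by Theorem \ref{pi} and Lemma \ref{H_0ofpi}, and property TR (now a hypothesis rather than a consequence of principality) supplying the final step $\Ker I\subset\Ima j$. The halving issue you flag is handled exactly as you describe, via Lemma \ref{H_0ofpi} together with cancellation from Theorem \ref{pi} (3).
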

\begin{proof}
The proof of Theorem \ref{almstfin>AH} works 
by using Theorem \ref{pi} and Lemma \ref{H_0ofpi} 
instead of Theorem \ref{almstfin} and Lemma \ref{H_0mod2}. 
\end{proof}

It is not known if all minimal purely infinite groupoids have TR. 
Here, we describe a situation 
where property TR is inherited from a smaller groupoid to a larger groupoid. 

Our setting is as follows. 
Let $\G$ be a minimal \'etale groupoid. 
Let $c:\G\to\Z$ be a continuous surjective homomorphism and 
let $\H=\Ker c$, 
which is a subgroupoid of $\G$ with $\H^{(0)}=\G^{(0)}$. 
Choose a non-empty compact open $\G$-set $U\subset c^{-1}(1)$. 
Then, $U$ induces an isomorphism $\pi:\H|r(U)\to\H|s(U)$ 
such that $\{\pi(h)\}=U^{-1}hU$. 
By \cite[Proposition 3.5, Theorem 3.6]{M12PLMS}, 
there exist natural isomorphisms 
$H_n(\H)\cong H_n(\H|r(U))\cong H_n(\H|s(U))$. 
Therefore, we obtain isomorphisms 
\[
\delta_n:H_n(\H)\cong H_n(\H|r(U))
\stackrel{H_n(\pi)}{\longrightarrow}H_n(\H|s(U))\cong H_n(\H). 
\]
By \cite[Proposition 3.7, Theorem 3.8]{M12PLMS}, 
there exists a spectral sequence 
\[
E^2_{p,q}=H_p(\Z,H_q(\H))\Rightarrow H_{p+q}(\G). 
\]
Hence, $H_0(\G)\cong\Coker(\id-\delta_0)$, 
and for any $n\geq1$ 
there exists a short exact sequence 
\begin{equation}
\begin{CD}
0@>>>\Coker(\id-\delta_n)@>>>H_n(\G)@>>>
\Ker(\id-\delta_{n-1})@>>>0. 
\end{CD}
\label{cow}
\end{equation}

For $f\in C(G^{(0)},\Z)$, 
we denote its equivalence class in $H_0(\H)$ by $[f]_\H$. 
When $V\subset c^{-1}(n)$ is a compact open $\G$-set, 
we have $\delta_0^n([1_{r(V)}]_\H)=[1_{s(V)}]_\H$. 

\begin{proposition}\label{GhasTR}
Let $\G$ be a minimal \'etale groupoid. 
Let $c:\G\to\Z$ be a continuous surjective homomorphism and 
let $\H=\Ker c$. 
Assume either of the following conditions. 
\begin{enumerate}
\item $\H$ is a principal, minimal, almost finite groupoid 
with $M(\H)=\{\mu\}$, 
and there exists a real number $0<\lambda<1$ such that, 
for any compact open $\G$-set $U\subset c^{-1}(1)$, 
$\mu(r(U))=\lambda\mu(s(U))$ holds. 
\item $\H$ is a minimal, purely infinite groupoid 
satisfying property TR. 
\end{enumerate}
Then, $\G$ is purely infinite and satisfies property TR. 
\end{proposition}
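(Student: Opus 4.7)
The plan is to handle the two conclusions in sequence, first deducing pure infiniteness of $\G$ and then combining that with the TR property of $\H$ to establish TR for $\G$.

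For pure infiniteness, case (2) is essentially immediate: any clopen $A \subset \G^{(0)} = \H^{(0)}$ is already properly infinite inside $\H$, and the witnessing compact open $\H$-sets are automatically compact open $\G$-sets. In case (1) the hypothesis $\mu(r(U)) = \lambda \mu(s(U))$ lets me contract. Given clopen $A$, I would choose $n$ with $\lambda^n < 1/2$ and assemble a compact open $\G$-set $W \subset c^{-1}(n)$ with $s(W) = A$ by patching local bisections of $c^{-1}(n)$ (using minimality of $\G$, inherited from that of $\H$, together with compactness of $A$). The hypothesis then gives $\mu(r(W)) = \lambda^n \mu(A) < \mu(A)/2$, so invoking \cite[Lemma 6.7]{M12PLMS}---valid because $\H$ is minimal, almost finite, with unique invariant measure---I produce two disjoint clopen subsets $A_1, A_2 \subset A$ with $[1_{A_i}]_\H = [1_{r(W)}]_\H$ in $H_0(\H)$. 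Cancellation for $\H$ (Theorem \ref{almstfin}(3)) supplies compact open $\H$-sets $V_i$ with $s(V_i) = r(W)$ and $r(V_i) = A_i$, and the compositions $V_iW$ witness proper infiniteness of $A$ inside $\G$.

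For property TR, my plan is to reduce any $\gamma \in [[\G]]_0$, modulo products of $\G$-transpositions, to an element of $[[\H]]_0$, at which point the TR property of $\H$ (Theorem \ref{almstfin}(4) in case (1); assumption in case (2)) writes it as a product of $\H$-transpositions, and these are in particular $\G$-transpositions. Fix a compact open $\G$-set $W_0 \subset c^{-1}(1)$ with $s(W_0) = \G^{(0)}$, assembled from local bisections. Writing $\gamma = \theta(U)$ and decomposing $U = \bigsqcup_{n \in F} U_n$ with $U_n = U \cap c^{-1}(n)$ and $F$ finite, the plan is to trade each nonzero-cocycle piece against a shift transposition: now that $\G$ is purely infinite and minimal, its cancellation property (Theorem \ref{pi}(3)) lets me refine each $U_n$ with $n \neq 0$ into $\G$-sets with disjoint source and range, each yielding an honest transposition in $[[\G]]$. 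Multiplying $\gamma$ by the inverses of these transpositions eliminates the nonzero-cocycle content and leaves an element $\alpha \in [[\H]] \cap [[\G]]_0$.

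The main obstacle is closing the gap between $[[\H]] \cap [[\G]]_0$ and $[[\H]]_0$: from the short exact sequence \eqref{cow}, the natural map $H_1(\H) \to H_1(\G)$ has kernel $\Ima(\id - \delta_1)$, so $I_\H(\alpha)$ can be a nonzero class of the form $[f] - \delta_1([f])$. I would handle this by realizing every such class as the $\H$-index of a suitable product of $\G$-transpositions---specifically, by arranging a pair of shift transpositions whose cocycle contributions cancel but whose $\H$-content reproduces the correction $[f] - \delta_1([f])$ via $W_0$-conjugation. Pure infiniteness and cancellation provide enough flexibility to choose these auxiliary $\G$-sets with disjoint source and range, so the corrections live in $[[\G]]$; multiplying $\alpha$ by finitely many of them pushes it into $[[\H]]_0$ and completes the reduction. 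The delicate point throughout is tracking the two index maps simultaneously, which is where Lemma \ref{H_0ofpi} and the explicit description of $\delta_n$ via $W_0$-conjugation become essential.
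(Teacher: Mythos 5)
Your overall architecture matches the paper's: establish pure infiniteness first, then reduce an element of $[[\G]]_0$ modulo transpositions to an element of $[[\H]]$, and finally close the gap between the kernel of $I$ restricted to $[[\H]]$ (computed in $H_1(\G)$) and $[[\H]]_0$ via the exact sequence \eqref{cow}. Your first and third steps are essentially sound: the third is precisely the paper's Lemma \ref{LemBforGhasTR} (the correction $z-\delta_1(z)$ is realized as $I_\H$ of a commutator $(\beta_0\beta)\tau_U(\beta_0\beta)^{-1}\tau_U$ with a shift transposition, after shrinking the support of $\beta$ into $r(U)$), and the construction of $W\subset c^{-1}(n)$ with $s(W)=A$ in your first step is not mere ``patching'' --- naively patched bisections have overlapping ranges, and one must redistribute the ranges using \cite[Lemma 6.7]{M12PLMS} and cancellation in $\H$ as in the paper's Lemma \ref{LemAforGhasTR} --- but the tools you cite do suffice there.

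The genuine gap is in the middle step. Multiplying $\gamma$ by the transpositions $\tau_{U_{n,j}}$ obtained by refining the $U_n$ does \emph{not} eliminate the nonzero-cocycle content. Writing $c_\gamma(x)$ for the value of $c$ on the unique element of $U$ with source $x$, one has $c_{\tau\gamma}(x)=c_\gamma(x)+c_\tau(\gamma(x))$; the transposition $\tau=\tau_{U_{n,j}}$ carries cocycle $+n$ on $s(U_{n,j})$ and $-n$ on $r(U_{n,j})$, so while the cocycle is killed on $s(U_{n,j})$ itself, new contributions $\pm n$ appear on $\gamma^{-1}(s(U_{n,j}))$ and $\gamma^{-1}(r(U_{n,j}))$ --- sets bearing no relation to where $\gamma$ already has cocycle $\mp n$. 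The range of $c_{(\cdot)}$ can grow (e.g.\ from $\{0,\pm n\}$ to $\{0,\pm n,\pm 2n\}$) and the process need not terminate. Indeed, the reduction as you describe it would apply to an arbitrary $\gamma\in[[\G]]$, which is impossible: the image of $I(\gamma)$ under the surjection $H_1(\G)\to\Ker(\id-\delta_0)$ of \eqref{cow} equals $\sum_n\delta_0^{(-n)}([1_{S(\gamma,n)}]_\H)$, vanishes on $[[\H]]$ and on every transposition (transpositions lie in $[[\G]]_0$), and is therefore an obstruction to writing $\gamma$ as an element of $[[\H]]$ times a product of transpositions; it is nonzero for suitable $\gamma$ whenever $\Ker(\id-\delta_0)\neq0$, by surjectivity of the index map. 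Since your argument never invokes $I(\gamma)=0$ at this stage, it proves too much. The correct route, which is the paper's (following \cite[Lemma 6.10]{M15crelle}), is to use $I(\gamma)=0$ together with the spectral sequence to derive the identity $\sum_n\delta_0^{(-n)}([1_{S(\gamma,n)}]_\H)=0$, conjugate $\gamma$ by one transposition into a support of small measure so that the subsequent construction has room (the choice of $m$ with $\lambda^m\hat\mu(z)<1$ in case (1)), and then build a \emph{single} element $\gamma'\in\langle\mathcal{T}\rangle$ with $S(\gamma',n)=S(\gamma,n)$ for all $n$ simultaneously, so that $\gamma(\gamma')^{-1}$ lands in $[[\H]]$.
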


In order to prove this proposition, we need two lemmas. 
Write $X=\G^{(0)}$. 
For $\alpha\in\Homeo(X)$, we let $\supp(\alpha)$ be 
the closure of $\{x\in X\mid\alpha(x)\neq x\}$. 
Let $\mathcal{T}\subset[[\G]]_0$ denote the set of all transpositions and 
let $\langle\mathcal{T}\rangle\subset[[\G]]_0$ denote 
the subgroup generated by $\mathcal{T}$. 

\begin{lemma}\label{LemAforGhasTR}
Under the same hypothesis as Proposition \ref{GhasTR}, the following holds. 
For any $n\in\Z$ and any non-empty clopen sets $A,B\subset X$, 
if $\delta_0^n([1_A]_\H)=[1_B]_\H$, 
then there exists a compact open $\G$-set $V\subset c^{-1}(n)$ 
such that $A=r(V)$, $B=s(V)$. 
\end{lemma}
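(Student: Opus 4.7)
The plan is to realize the $H_0(\H)$-level identity $\delta_0^n([1_A]_\H)=[1_B]_\H$ by an honest compact open $\G$-set in $c^{-1}(n)$. I will proceed in two phases: first build a compact open $\G$-set $V^{(1)}\subset c^{-1}(n)$ with $r(V^{(1)})=A$, and then correct its source to $B$ via an $\H$-set coming from cancellation. Under both hypotheses of Proposition \ref{GhasTR}, $\H$ is minimal and has cancellation---by Theorem \ref{almstfin} (3) in case (1), and by Theorem \ref{pi} (3) in case (2)---which is exactly what the second phase needs. Once $V^{(1)}$ is in place and $B^{(1)}:=s(V^{(1)})$ is known, the formula $\delta_0^n([1_{r(V^{(1)})}]_\H)=[1_{s(V^{(1)})}]_\H$ recalled just before the lemma yields $[1_{B^{(1)}}]_\H=[1_B]_\H$; cancellation then produces a compact open $\H$-set $T$ with $r(T)=B^{(1)}$ and $s(T)=B$, and $V:=V^{(1)}T$ is the desired $\G$-set.

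To build $V^{(1)}$, fix a nonempty compact open $\G$-set $W_0\subset c^{-1}(n)$ (which exists since $c^{-1}(n)$ is open and nonempty in $\G$ and compact open $\G$-sets form a basis). For each $x\in A$, minimality of $\H$ gives an element of $\H$ from $x$ to $r(W_0)$; extending to a compact open $\H$-set $H_x$ with $r(H_x)$ a clopen neighborhood of $x$ in $A$ and $s(H_x)\subset r(W_0)$, the product $H_xW_0$ is a compact open $\G$-set in $c^{-1}(n)$ with range $r(H_x)$. Compactness of $A$ lets me cover it by finitely many such ranges and disjointify to a clopen partition $A=\bigsqcup_{i=1}^k A_i$ with $A_i\subset r(H_{x_i})$; the restrictions $V_i:=(H_{x_i}W_0)\cap r^{-1}(A_i)$ are then compact open $\G$-sets in $c^{-1}(n)$ with pairwise disjoint ranges $A_i$.

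The main obstacle is that the sources $s(V_i)\subset s(W_0)$ may overlap, so $\bigsqcup V_i$ need not be a $\G$-set. I plan to resolve this by post-composing each $V_i$ with a compact open $\H$-set $T_i$ that sends $s(V_i)$ to a clopen $B'_i$ chosen so that the $B'_i$ are pairwise disjoint and $[1_{B'_i}]_\H=[1_{s(V_i)}]_\H$; existence of $T_i$ then follows from cancellation in $\H$. Under hypothesis (2), Lemma \ref{H_0ofpi} directly supplies the $B'_i$: partition $X$ into $k$ nonempty clopens $C_1,\dots,C_k$ and find $B'_i\subset C_i$ with the prescribed class. Under hypothesis (1), the unique invariant measure $\mu$ scales by $\lambda^{-n}$ on $\G$-sets in $c^{-1}(n)$, so $\sum_i\mu(s(V_i))=\lambda^{-n}\mu(A)=\mu(B)\leq 1$; the realization result \cite[Lemma 6.7]{M12PLMS} for minimal almost finite groupoids then allows me to choose the $B'_i$ inductively in the shrinking complement, with the borderline case $\mu(B)=1$ absorbed by a final direct application of $\H$-cancellation to the residue. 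Setting $V^{(1)}:=\bigsqcup_iV_iT_i$ completes the first phase, and phase two then closes the argument.
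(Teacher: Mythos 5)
Your proof is correct and is essentially the paper's own argument: both cover $A$ by ranges of $\H$-sets (obtained from minimality of $\H$) composed with a fixed nonempty compact open $\G$-set in $c^{-1}(n)$, and then invoke the realization results (\cite[Lemma 6.7]{M12PLMS}, resp.\ \cite[Proposition 4.11]{M15crelle}) together with cancellation (Theorem \ref{almstfin} (3), resp.\ Theorem \ref{pi} (3)) to rearrange the overlapping sources. The only difference is organizational: you disjointify the sources first and then transport their union to $B$ by one further application of cancellation, whereas the paper chooses the disjoint images directly as a clopen partition of $B$, which also sidesteps your borderline case $\mu(B)=1$.
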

\begin{proof}
Let us consider the case (1). 
Choose a non-empty compact open $\G$-set $U\subset c^{-1}(n)$. 
Since $\H$ is minimal, 
there exist compact open $\H$-sets $W_1,W_2,\dots,W_k\subset\H$ 
such that $\{r(W_1),r(W_2),\dots,r(W_k)\}$ is a partition of $A$ 
and $s(W_i)\subset r(U)$. 
We have 
\[
\sum_{i=1}^k[1_{s(W_iU)}]_\H=\sum_{i=1}^k\delta_0^n([1_{r(W_iU)}]_\H)
=\sum_{i=1}^k\delta_0^n([1_{r(W_i)}]_\H)=\delta_0^n([1_A]_\H)=[1_B]_\H. 
\]
It follows from \cite[Lemma 6.7]{M12PLMS} and Theorem \ref{almstfin} (3) that 
we can find compact open $\H$-sets $W'_1,W'_2,\dots,W'_k\subset\H$ 
such that $\{s(W'_1),s(W'_2),\dots,s(W'_k)\}$ is a partition of $B$ 
and $s(W_iU)=r(W'_i)$. 
Then $V=\bigcup_iW_iUW'_i$ is a compact open $\G$-set 
such that $V\subset c^{-1}(n)$, $r(V)=A$ and $s(V)=B$. 

In the case (2), 
almost the same proof works 
by replacing \cite[Lemma 6.7]{M12PLMS} and Theorem \ref{almstfin} (3) 
with \cite[Proposition 4.11]{M15crelle} and Theorem \ref{pi} (3). 
\end{proof}

\begin{lemma}\label{LemBforGhasTR}
Under the same hypothesis as Proposition \ref{GhasTR}, the following holds. 
If $\alpha\in[[\H]]$ satisfies $I(\alpha)=0$ in $H_1(\G)$, 
then $\alpha$ is in $\langle\mathcal{T}\rangle$. 
\end{lemma}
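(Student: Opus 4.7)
The plan is to leverage the spectral sequence \eqref{cow} for $n=1$ to constrain $I_\H(\alpha)\in H_1(\H)$, and then to realise the constraining class by transpositions in $[[\G]]$. Since $\alpha\in[[\H]]$ is carried by a bisection contained in $\H$, $I_\G(\alpha)$ is the image of $I_\H(\alpha)$ under the inclusion-induced map $H_1(\H)\to H_1(\G)$; the short exact sequence \eqref{cow} identifies this map as the composition
\[
H_1(\H)\twoheadrightarrow\Coker(\id-\delta_1)\hookrightarrow H_1(\G).
\]
Hence $I_\G(\alpha)=0$ forces $I_\H(\alpha)\in\Ima(\id-\delta_1)\subset H_1(\H)$, and we may write
\[
I_\H(\alpha)=\sum_j\epsilon_j\bigl([1_{W_j}]_\H-[1_{\pi(W_j)}]_\H\bigr),
\]
for $\epsilon_j\in\{\pm1\}$ and compact open $\H$-bisections $W_j\subset\H|r(U)$ with $s(W_j)=r(W_j)$ (such a representation is available after subdivision, since $H_1(\H)$ is generated by classes of bisections with coinciding source and range).

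For each $W_j$, let $\gamma_{W_j}\in[[\H]]$ denote the element extending $\theta(W_j)$ by the identity off $s(W_j)$, so that $I_\H(\gamma_{W_j})=[1_{W_j}]_\H$; define $\gamma_{\pi(W_j)}$ analogously inside $\H|s(U)$. Set $\gamma=\prod_j(\gamma_{W_j}\gamma_{\pi(W_j)}^{-1})^{\epsilon_j}\in[[\H]]$. Then $I_\H(\alpha\gamma^{-1})=0$, and by Theorem~\ref{almstfin}(4) in case (1) or by the hypothesis that $\H$ has property TR in case (2), $\alpha\gamma^{-1}$ is a product of transpositions in $[[\H]]\subset[[\G]]$. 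The lemma thus reduces to showing that each factor $\gamma_{W_j}\gamma_{\pi(W_j)}^{-1}$ lies in $\langle\mathcal{T}\rangle$.

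The crux is the identity
\[
\gamma_W\gamma_{\pi(W)}^{-1}=[\gamma_W,\tau_{U_0}]=(\gamma_W\tau_{U_0}\gamma_W^{-1})\,\tau_{U_0}^{-1},
\]
which holds, and exhibits $\gamma_W\gamma_{\pi(W)}^{-1}$ as a product of two transpositions, whenever $U_0\subset U$ is a compact open sub-$\G$-set with $s(U_0)\cap r(U_0)=\emptyset$ and $r(W)\cup s(W)\subset r(U_0)$: in that case $\tau_{U_0}\in[[\G]]$ is a genuine transposition, and a direct calculation using $\{\pi(h)\}=U_0^{-1}hU_0$ yields $\tau_{U_0}\gamma_W\tau_{U_0}^{-1}=\gamma_{\pi(W)}$. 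Accordingly we wish to subdivide each $W_j$ into pieces each carried by such a sub-$\G$-set $U_{j,k}\subset U$.

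The main obstacle is the fixed-point set $F=\{x\in r(U):\pi(x)=x\}$: since $\G$ is essentially principal $F$ is closed with empty interior, but at a point of $s(W_j)\cap F$ no sub-$\G$-set of $U$ covering a neighbourhood of that point has disjoint source and range, and the commutator recipe fails. To bypass this we first replace each $W_j$ within its $H_1(\H)$-class by a homologous $\H$-bisection whose source is disjoint from $F$: using the cancellation property of $\H$ (Theorem~\ref{almstfin}(3) in case (1) or Theorem~\ref{pi}(3) in case (2)) together with the minimality of $\H$, we transport $s(W_j)$ off the nowhere-dense set $F$ by an auxiliary $\H$-bisection, which modifies $\gamma_{W_j}$ only by an element of $[[\H]]_0$; such an element is already a product of $\H$-transpositions by the hypothesis on $\H$, so stays in $\langle\mathcal{T}\rangle$. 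After this preparation the subdivision and the commutator identity combine to give $\gamma\in\langle\mathcal{T}\rangle$, finishing the proof.
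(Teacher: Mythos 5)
Your overall architecture is the same as the paper's: read off from the edge of the exact sequence \eqref{cow} that $I_\H(\alpha)\in\Ima(\id-\delta_1)$, realize a class of the form $z-\delta_1(z)$ by commutators $\beta\tau\beta^{-1}\tau$ with $\tau$ a transposition built from (a piece of) $U$, and finish by applying property TR of $\H$ (Theorem \ref{almstfin}(4) in case (1), the hypothesis in case (2)) to the index-zero remainder. The only structural difference is that the paper takes a \emph{single} $\beta\in[[\H]]$ with $I_\H(\beta)=z$ and compresses its support into $r(U)$ (via \cite[Lemma 7.10, Lemma 6.7]{M12PLMS}, resp.\ \cite[Proposition 4.11]{M15crelle}), producing one commutator with $\tau_U$, whereas you decompose $z$ into generators $[1_{W_j}]$ and produce one commutator per generator. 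That variant is fine in principle.

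The genuine gap is in your treatment of the disjointness needed for $\tau_{U_0}$ to exist. You correctly see that the commutator identity requires a sub-$\G$-set $U_0\subset U$ with $s(W_j)\subset r(U_0)$ and $r(U_0)\cap s(U_0)=\emptyset$, i.e.\ $s(W_j)\cap\theta(U)^{-1}(s(W_j))=\emptyset$. But your repair does not deliver this: moving $s(W_j)$ off the fixed-point set $F$ of $\theta(U)^{-1}$ does not prevent a clopen set from meeting its image (e.g.\ $\theta(U)^{-1}$ may interchange two halves of $s(W_j)$), and the subsequent ``subdivision'' of $W_j$ is not available in general, because writing $\gamma_{W_j}$ as a product of the $\gamma$'s of smaller pieces requires a clopen partition of $s(W_j)$ into $\theta(W_j)$-invariant sets with $s=r$ on each piece, which need not exist. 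The correct fix is the one the paper makes in its very first line: since $U\subset c^{-1}(1)$ contains no units and $\G$ is essentially principal, one may shrink $U$ once and for all so that $r(U)\cap s(U)=\emptyset$ (this does not change $\delta_1$); then every $W_j\subset\H|r(U)$ automatically satisfies $s(W_j)\subset r(U)$ and $\theta(U)^{-1}(s(W_j))\subset s(U)$, so the required $U_0=U\cap r^{-1}(s(W_j))$ exists for free and your entire fixed-point discussion becomes unnecessary. With that one-line amendment your argument goes through.
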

\begin{proof}
Let us consider the case (1). 
Choose a non-empty compact open $\G$-set $U\subset c^{-1}(1)$. 
By replacing $U$ with its suitable clopen subset, 
we may assume that $r(U)\cap s(U)=\emptyset$. 
We let $I_\H:[[\H]]\to H_1(\H)$ denote 
the index map for $\H$. 
Since $I(\alpha)=0$ in $H_1(\G)$, 
from the short exact sequence \eqref{cow}, 
we conclude that $I_\H(\alpha)$ is trivial in $\Coker(\id-\delta_1)$. 
Thus, 
there exists $z\in H_1(\H)$ such that $I_\H(\alpha)=z-\delta_1(z)$. 
It follows from Theorem \ref{almstfin} (2) that 
there exists $\beta\in[[\H]]$ such that $I_\H(\beta)=z$. 
By \cite[Lemma 7.10]{M12PLMS} and its proof, 
we can find an elementary homeomorphism $\beta_0\in[[H]]$ 
(see \cite[Definition 7.6 (1)]{M12PLMS}) 
such that $\mu(\supp(\beta_0\beta))<\mu(r(U))$. 
It follows from \cite[Lemma 6.7]{M12PLMS} that, 
by replacing $U$ again, we may assume $\supp(\beta_0\beta)\subset r(U)$. 
Define the transposition $\tau_U\in\mathcal{T}$ 
as in the proof of Theorem \ref{almstfin>AH}. 
Then, by the definition of $\delta_1$, 
one has $I_\H(\tau_U\beta_0\beta\tau_U)=\delta_1(I_\H(\beta_0\beta))$. 
Hence, 
\begin{align*}
I_\H\left((\beta_0\beta)\tau_U(\beta_0\beta)^{-1}\tau_U\right)
&=I_\H(\beta_0\beta)-\delta_1(I_\H(\beta_0\beta))\\
&=I_\H(\beta)-\delta_1(I_\H(\beta))=z-\delta_1(z). 
\end{align*}
Clearly $\gamma=(\beta_0\beta)\tau_U(\beta_0\beta)^{-1}\tau_U$ 
belongs to $\langle\mathcal{T}\rangle$, 
and $I_\H(\alpha\gamma^{-1})=I_\H(\alpha)-(z-\delta_1(z))=0$ 
in $H_1(\H)$. 
Since $\H$ satisfies property TR, 
$\alpha\gamma^{-1}$ is in $\langle\mathcal{T}\rangle$. 
Therefore $\alpha$ is in $\langle\mathcal{T}\rangle$. 

In the case (2), 
the same proof works 
by replacing Theorem \ref{almstfin} (2) and \cite[Lemma 6.7]{M12PLMS} 
with Theorem \ref{pi} (2) and \cite[Proposition 4.11]{M15crelle}. 
\end{proof}

\begin{proof}[Proof of Proposition \ref{GhasTR}]
First, let us consider the case (1). 

We will show that $\G$ is purely infinite. 
Let $A,B\subset X$ be non-empty clopen subsets. 
For sufficiently large $n\in\N$, we have $\lambda^n\mu(A)<\mu(B)$. 
By \cite[Lemma 6.7]{M12PLMS}, 
there exists $C\subset B$ such that $[1_C]_\H=\delta_0^{-n}([1_A]_\H)$. 
By Lemma \ref{LemAforGhasTR}, 
there exists a compact open $\G$-set $U\subset\G$ 
such that $s(U)=A$, $r(U)=C\subset B$. 
By \cite[Proposition 4.11]{M15crelle}, $\G$ is purely infinite. 

We will show that $\G$ has property TR. 
The proof goes exactly as that of \cite[Lemma 6.10]{M15crelle}. 
For $\alpha\in[[\G]]$, 
we take the compact open $\G$-set $U\subset\G$ 
satisfying $\alpha=\theta(U)=(r|U)\circ(s|U)^{-1}$, 
and define clopen subsets $S(\alpha,n)\subset X$ by 
$S(\alpha,n)=s(U\cap c^{-1}(n))$. 
Note that $S(\alpha,n)$ is empty except for finitely many $n\in\Z$. 

Suppose that $\alpha\in[[\G]]_0\setminus\{1\}$ is given. 
In the same way as \cite[Lemma 6.10]{M15crelle}, 
we may assume that $A=\supp(\alpha)$ is not equal to $X$. 
We have 
\[
[1_X]_\H=\sum_{n\in\Z}[1_{S(\alpha,n)}]_\H
\]
in $H_0(\H)$, 
because $\{S(\alpha,n)\mid n\in\Z\}$ is a clopen partition of $X$. 
Since $\{\alpha(S(\alpha,n))\mid n\in\Z\}$ is also a clopen partition of $X$, 
one obtains 
\[
[1_X]_\H=\sum_{n\in\Z}[1_{\alpha(S(\alpha,n))}]_\H
=\sum_{n\in\Z}\delta_0^{-n}\left([1_{S(\alpha,n)}]_\H\right)
\]
in $H_0(\H)$. 
Therefore, we get 
\begin{equation}
\sum_{n\in\Z}(\id-\delta_0^{-n})\left([1_{S(\alpha,n)}]_\H\right)=0. 
\label{mouse}
\end{equation}
For $n\in\Z$, we define homomorphisms $\delta_0^{(n)}:H_0(\H)\to H_0(\H)$ by 
\[
\delta_0^{(n)}=\begin{cases}\id+\delta_0+\dots+\delta_0^{n-1}&n>0\\
0&n=0\\
-(\delta_0^{-1}+\delta_0^{-2}+\dots+\delta_0^n)&n<0, \end{cases}
\]
so that $(\id-\delta_0)\delta_0^{(n)}=\id-\delta_0^n$ hold. 
It follows from \eqref{mouse} that 
\[
\sum_{n\in\Z}(\id-\delta_0)
\left(\delta_0^{(-n)}\left([1_{S(\alpha,n)}]_\H\right)\right)=0, 
\]
which means that $\sum_{n\in\Z}\delta_0^{(-n)}([1_{S(\alpha,n)}]_\H)$ 
is in $\Ker(\id-\delta_0)$. 
The proof of \cite[Theorem 4.14]{M12PLMS} implies that, 
in the exact sequence \eqref{cow}, 
the element $I(\alpha)\in H_1(\G)$ maps to 
$\sum_n\delta_0^{(-n)}([1_{S(\alpha,n)}]_\H)\in\Ker(\id-\delta_0)$ 
(see \cite[Lemma 6.8]{M15crelle}). 
Since we assumed $I(\alpha)=0$, we get 
\[
\sum_{n\in\Z}\delta_0^{(-n)}([1_{S(\alpha,n)}]_\H)=0
\]
in $H_0(\H)$. 
Set $P=\{n\in\N\mid S(\alpha,n)\neq\emptyset\}$ and 
$Q=\{n\in\Z\mid n<0,\ S(\alpha,n)\neq\emptyset\}$. 
Then 
\begin{equation}
-\sum_{n\in P}\delta_0^{(-n)}\left([1_{S(\alpha,n)}]_\H\right)=
\sum_{n\in Q}\delta_0^{(-n)}\left([1_{S(\alpha,n)}]_\H\right). 
\label{tiger}
\end{equation}
Put 
\[
z=[1_A]_\H+\delta_0\left(
\sum_{n\in Q}\delta_0^{(-n)}\left([1_{S(\alpha,n)}]_\H\right)\right)
\in H_0(\H). 
\]
We have 
\[
\hat\mu(z)=\mu(A)
+\sum_{n\in Q}(\lambda^{-1}+\lambda^{-2}+\dots+\lambda^n)\mu(S(\alpha,n)), 
\]
where the homomorphism $\hat\mu:H_0(\H)\to\R$ is defined by 
\[
\hat\mu([f]_\H)=\int f\,d\mu
\]
for $f\in C(X,\Z)$. 
Choose $m\in\N$ so that 
\[
\lambda^m\mu(A)
<\mu(X\setminus A)\quad\text{and}\quad \lambda^m\hat\mu(z)<1
\]
hold. 
Then we can find a clopen set $B\subset X\setminus A$ 
such that $[1_B]_\H=\delta_0^{-m}([1_A]_\H)$. 
It follows from Lemma \ref{LemAforGhasTR} that 
there exists a compact open $\G$-set $V\subset c^{-1}(m)$ 
such that $r(V)=B$ and $s(V)=A$. 
Define the transposition $\tau_V\in\mathcal{T}$ 
as in the proof of Theorem \ref{almstfin>AH}. 
Set $\beta=\tau_V\alpha\tau_V$. 
It suffices to show that $\beta$ belongs to $\langle\mathcal{T}\rangle$. 
Notice that $\supp(\beta)=\tau_V(A)=B$, $S(\beta,n)=\tau_V(S(\alpha,n))$ 
and 
\[
[1_{S(\beta,n)}]_\H=\delta^{-m}\left([1_{S(\alpha,n)}]_\H\right)
\]
for every $n\in\Z$. 
Hence, by \eqref{tiger}, we get 
\begin{equation}
-\sum_{n\in P}\delta_0^{(-n)}\left([1_{S(\beta,n)}]_\H\right)=
\sum_{n\in Q}\delta_0^{(-n)}\left([1_{S(\beta,n)}]_\H\right). 
\end{equation}

Then, in the same way as the proof of \cite[Lemma 6.10]{M15crelle}, 
we can construct $\gamma\in\langle\mathcal{T}\rangle$ 
such that $S(\gamma,n)=S(\beta,n)$ for all $n\in\Z$. 
Then we obtain $S(\beta\gamma^{-1},0)=X$, 
which implies $\beta\gamma^{-1}\in[[\H]]$. 
Since $I(\beta\gamma^{-1})=I(\beta)=I(\alpha)=0$, 
by Lemma \ref{LemBforGhasTR}, 
we get $\beta\gamma^{-1}\in\langle\mathcal{T}\rangle$. 
It follows that $\beta$ belongs to $\langle\mathcal{T}\rangle$, 
as desired. 

In the case (2), 
almost the same proof works 
by replacing \cite[Lemma 6.7]{M12PLMS} and Theorem \ref{almstfin} (3) 
with \cite[Proposition 4.11]{M15crelle} and Theorem \ref{pi} (3). 
We omit it. 
\end{proof}

\section{Products of SFT groupoids}

\subsection{Preliminaries}

We first recall the definition of 
\'etale groupoids arising from one-sided shifts of finite type. 
Let $(\mathcal{V},\mathcal{E})$ be a finite directed graph, 
where $\mathcal{V}$ is a finite set of vertices 
and $\mathcal{E}$ is a finite set of edges. 
For $e\in\mathcal{E}$, $i(e)$ denotes the initial vertex of $e$ and 
$t(e)$ denotes the terminal vertex of $e$. 
Let $A=(A(\xi,\eta))_{\xi,\eta\in\mathcal{V}}$ be 
the adjacency matrix of $(\mathcal{V},\mathcal{E})$, 
that is, 
\[
A(\xi,\eta)=\#\{e\in\mathcal{E}\mid i(e)=\xi,\ t(e)=\eta\}. 
\]
We assume that $A$ is irreducible 
(i.e. for all $\xi,\eta\in\mathcal{V}$ 
there exists $n\in\N$ such that $A^n(\xi,\eta)>0$) 
and that $A$ is not a permutation matrix. 
Define 
\[
X_A=\{(x_k)_{k\in\N}\in\mathcal{E}^\N
\mid t(x_k)=i(x_{k+1})\quad\forall k\in\N\}. 
\]
With the product topology, $X_A$ is a Cantor set. 
Define a surjective continuous map $\sigma_A:X_A\to X_A$ by 
\[
\sigma_A(x)_k=x_{k+1}\quad k\in\N,\ x=(x_k)_k\in X_A. 
\]
In other words, $\sigma_A$ is the (one-sided) shift on $X_A$.  
It is easy to see that $\sigma_A$ is a local homeomorphism. 
The dynamical system $(X_A,\sigma_A)$ is called 
the one-sided irreducible shift of finite type (SFT) 
associated with the graph $(\mathcal{V},\mathcal{E})$ (or the matrix $A$). 

The \'etale groupoid $\G_A$ for $(X_A,\sigma_A)$ is given by 
\[
\G_A=\{(x,n,y)\in X_A\times\Z\times X_A\mid
\exists k,l\in\N,\ n=k{-}l,\ \sigma_A^k(x)=\sigma_A^l(y)\}. 
\]
The topology of $\G_A$ is generated by the sets 
$\{(x,k{-}l,y)\in\G_A\mid x\in P,\ y\in Q,\ \sigma_A^k(x)=\sigma_A^l(y)\}$, 
where $P,Q\subset X_A$ are open and $k,l\in\N$. 
Two elements $(x,n,y)$ and $(x',n',y')$ in $\G$ are composable 
if and only if $y=x'$, and the multiplication and the inverse are 
\[
(x,n,y)\cdot(y,n',y')=(x,n{+}n',y'),\quad (x,n,y)^{-1}=(y,-n,x). 
\]
We identify $X_A$ with the unit space $\G_A^{(0)}$ via $x\mapsto(x,0,x)$. 
We call $\G_A$ the SFT groupoid associated with the matrix $A$. 

The following is a classification theorem of SFT groupoids. 
For an $N\times N$ matrix $A$ with entries in $\N\cup\{0\}$, 
the Bowen-Franks group $\BF(A)$ is the abelian group $\Z^N/(\id-A)\Z^N$. 
We let $u_A$ denote 
the equivalence class of $(1,1,\dots,1)\in\Z^N$ in $\BF(A^t)$. 

\begin{theorem}[{\cite[Theorem 3.6]{MM14Kyoto}}]\label{SFTclassify}
Let $(X_A,\sigma_A)$ and $(X_B,\sigma_B)$ be 
two irreducible one-sided shifts of finite type. 
The following conditions are equivalent. 
\begin{enumerate}
\item $(X_A,\sigma_A)$ and $(X_B,\sigma_B)$ are 
continuously orbit equivalent. 
\item The \'etale groupoids $\G_A$ and $\G_B$ are isomorphic. 
\item There exists an isomorphism $\Phi:\BF(A^t)\to\BF(B^t)$ 
such that $\Phi(u_A)=u_B$ and $\sgn(\det(\id-A))=\sgn(\det(\id-B))$. 
\end{enumerate}
\end{theorem}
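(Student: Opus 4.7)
My plan is to prove the three conditions equivalent by establishing $(1)\iff(2)$, $(2)\Rightarrow(3)$, and $(3)\Rightarrow(1)$.

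The equivalence $(1)\iff(2)$ is essentially formal and I would treat it first. A continuous orbit equivalence between $(X_A,\sigma_A)$ and $(X_B,\sigma_B)$ consists of a homeomorphism $h\colon X_A\to X_B$ together with continuous cocycles that equate shift-orbits under $h$. Given such data, the map $(x,k-l,y)\mapsto (h(x),k'-l',h(y))$, with $k',l'$ read off from the cocycles by iteration, defines an isomorphism $\G_A\to\G_B$. Conversely, any isomorphism $\G_A\cong\G_B$ restricts to a homeomorphism of unit spaces (because the unit space is the interior of the isotropy by essential principality), and the required cocycles can be recovered by evaluating the isomorphism on the $\G$-set $\{(x,1,\sigma_A(x))\}$ and its iterates. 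I would record this as a direct unpacking of the definitions, citing Renault's reconstruction theorem to handle the diagonal-preservation automatically.

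For $(2)\Rightarrow(3)$: an isomorphism of groupoids induces an isomorphism of groupoid $C^*$-algebras that preserves the canonical Cartan subalgebras, hence induces an isomorphism of $K_0$ preserving the class of the unit. Combining with the Cuntz--Krieger computation $K_0(C^*_r(\G_A))\cong \BF(A^t)$, in which $[1]\leftrightarrow u_A$, yields the required $\Phi$ with $\Phi(u_A)=u_B$. The preservation of $\sgn\det(\id-A)$ is the more delicate point: I would extract it as an invariant by relating the one-sided groupoid $\G_A$ to the flow equivalence class of the two-sided SFT, using that flow equivalence is a Morita-type equivalence of the suspension groupoid. Franks' original proof identifies $\sgn\det(\id-A)$ with an orientation invariant of the flow, and this can be phrased groupoid-theoretically (e.g.\ through a signature on $H_1$ or through parity of certain permutations appearing in $[[\G_A]]$).

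For $(3)\Rightarrow(1)$: Franks' theorem says that irreducible two-sided SFTs with matching Bowen--Franks groups and matching determinant signs are flow equivalent. The extra datum $\Phi(u_A)=u_B$ is exactly what is needed to lift such a flow equivalence to the one-sided setting and produce a continuous orbit equivalence: the pointed Bowen--Franks group $(\BF(A^t),u_A)$ determines not only the flow equivalence class but also the distinguished transversal, which corresponds precisely to $X_A$ sitting inside the mapping torus. Concretely, I would run Parry--Sullivan-type in-splitting/amalgamation moves, verifying at each step that both $u$ and the sign of the determinant are preserved, and arrange the sequence of moves so that the endpoints are $(X_A,\sigma_A)$ and $(X_B,\sigma_B)$; continuous orbit equivalence is then produced by composition.

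The main obstacle is $(2)\Rightarrow(3)$, specifically exhibiting $\sgn\det(\id-A)$ as a groupoid invariant --- unlike the Bowen--Franks group with its distinguished element, there is no obvious $K$-theoretic quantity that reads off this sign, and one has to either pass through flow equivalence of the two-sided SFT (where Franks' argument is available) or manufacture a direct parity invariant from $[[\G_A]]$. The combinatorial construction in $(3)\Rightarrow(1)$ is lengthy but, given Franks' theorem as a black box, essentially routine.
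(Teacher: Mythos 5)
First, a point of comparison: the paper does not prove this theorem at all --- it is quoted verbatim from \cite{MM14Kyoto} --- so there is no internal argument to measure your proposal against; what follows assesses your outline against the proof that reference actually gives. Your treatment of $(1)\Leftrightarrow(2)$ and of $(3)\Rightarrow(1)$ is sound in outline and matches the known argument: orbit-equivalence data and groupoid isomorphisms translate into one another essentially formally (with Renault's reconstruction theorem available for the $C^*$-formulation), and for $(3)\Rightarrow(1)$ one realizes Franks' flow equivalence by Parry--Sullivan/splitting moves, each inducing a Morita equivalence of the one-sided groupoids, and then uses the pointing $\Phi(u_A)=u_B$ together with cancellation (Theorem \ref{pi} (3)) to upgrade the Morita equivalence to an isomorphism. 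One small correction there: the elementary moves do not preserve the class $u$ individually; one tracks its image through the chain and invokes cancellation only at the end.

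The genuine gap is exactly where you place it, in $(2)\Rightarrow(3)$: none of your three suggested routes to the invariance of $\sgn(\det(\id-A))$ works. A ``signature on $H_1$'' or a parity invariant read off from $[[\G_A]]_\ab$ cannot exist, because $H_0(\G_A)\cong\BF(A^t)$, $H_1(\G_A)\cong\Ker(\id-A^t)$ and $[[\G_A]]_\ab\cong(H_0(\G_A)\otimes\Z_2)\oplus H_1(\G_A)$ (Theorem \ref{SFTproperty}), so none of these objects sees the sign; the decisive test case is $\mathcal{O}_2$ versus its Cuntz splice, where all of this data is trivial on both sides and the entire content of the theorem is that the sign nonetheless separates the groupoids. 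Passing through flow equivalence of the two-sided shifts is circular as you set it up: Franks classifies flow equivalence precisely by the pair $(\BF(A^t),\sgn\det(\id-A))$, so to deduce ``isomorphic groupoids $\Rightarrow$ flow equivalent suspensions'' you would already need to know that the sign is an invariant, and the implication from (stable, diagonal-preserving) isomorphism to flow equivalence is itself a nontrivial theorem proved partly by means of the present statement. In \cite{MM14Kyoto} the equality $\det(\id-A)=\det(\id-B)$ is established directly from the cocycles attached to the continuous orbit equivalence --- this is the main technical content of that paper, flow equivalence of the two-sided shifts being deduced afterwards as a corollary via Franks --- and it is the one ingredient your proposal does not supply.
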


The following is an immediate consequence of the theorem above. 

\begin{corollary}\label{SFTMorita}
Let $(X_A,\sigma_A)$ and $(X_B,\sigma_B)$ be 
two irreducible one-sided shifts of finite type. 
The following conditions are equivalent. 
\begin{enumerate}
\item The \'etale groupoids $\G_A$ and $\G_B$ are Morita equivalent. 
\item $\BF(A^t)\cong\BF(B^t)$ and $\sgn(\det(\id-A))=\sgn(\det(\id-B))$. 
\end{enumerate}
\end{corollary}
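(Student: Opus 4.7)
The plan is to reduce the corollary to Theorem \ref{SFTclassify} via a state-splitting construction that realizes certain reductions of SFT groupoids as SFT groupoids again. The crucial technical step would be the following realization lemma: if $Y\subset X_A$ is a clopen subset that is a finite disjoint union of cylinder sets of common length $k$, then the reduction $\G_A|Y$ is isomorphic to an SFT groupoid $\G_{A'}$ for some irreducible (non-permutation) matrix $A'$, and under this isomorphism the natural map $\BF((A')^t)\cong H_0(\G_{A'})\cong H_0(\G_A|Y)\cong H_0(\G_A)\cong \BF(A^t)$ sends $u_{A'}$ to $[1_Y]$, while $\sgn(\det(\id-A'))=\sgn(\det(\id-A))$. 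I would prove this by taking $A'$ to be the adjacency matrix of the directed graph whose vertices are the cylinder pieces making up $Y$ and whose edges record admissible length-$k$ extensions that return to $Y$; the invariance of $\BF$ and of the sign of $\det(\id-\cdot)$ then follows from a direct matrix-level computation.

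For the forward direction (1)$\Rightarrow$(2), assume $\G_A|Y_A\cong\G_B|Y_B$ for clopen $Y_A,Y_B$. After refining each $Y_i$ to a finite disjoint union of cylinder sets of a common length (which does not change the reduction up to canonical isomorphism), the lemma produces irreducible matrices $A',B'$ with $\G_{A'}\cong\G_{B'}$. Applying Theorem \ref{SFTclassify} to $A'$ and $B'$ gives an isomorphism $\BF((A')^t)\cong\BF((B')^t)$ carrying $u_{A'}$ to $u_{B'}$, together with matching signs. Composing with the natural identifications $\BF((A')^t)\cong\BF(A^t)$ and $\BF((B')^t)\cong\BF(B^t)$ provided by the lemma, and using that the sign invariant is preserved, yields (2).

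For the backward direction (2)$\Rightarrow$(1), given $\Phi:\BF(A^t)\to\BF(B^t)$ with $\sgn(\det(\id-A))=\sgn(\det(\id-B))$, I would first choose a representative $(n_1,\dots,n_M)\in\Z^M$ of $\Phi(u_A)$ with all $n_\xi\geq 0$. This is possible because $B$ is irreducible and not a permutation, so every element of $\BF(B^t)$ has a non-negative representative (add a sufficiently large multiple of $u_B=[(1,\dots,1)]$ and then adjust modulo $(\id-B^t)\Z^M$ to absorb the added amount). Build $Y_B\subset X_B$ as a disjoint union of cylinder sets with exactly $n_\xi$ pieces terminating at each vertex $\xi$, so that $[1_{Y_B}]=\Phi(u_A)$ in $\BF(B^t)$. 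Apply the lemma to obtain $\G_B|Y_B\cong\G_{B'}$ with $u_{B'}$ corresponding to $\Phi(u_A)$. Then $\Phi$ becomes an isomorphism $\BF(A^t)\cong\BF((B')^t)$ sending $u_A$ to $u_{B'}$, with signs still matching. Theorem \ref{SFTclassify} gives $\G_A\cong\G_{B'}\cong\G_B|Y_B$, so $\G_A$ and $\G_B$ are Morita equivalent (taking $Y_A=X_A$).

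The main obstacle will be the realization lemma in the first paragraph: while the state-splitting construction is standard in symbolic dynamics, carefully identifying the induced map on $\BF$ with the prescribed correspondence $u_{A'}\leftrightarrow[1_Y]$, and verifying the preservation of $\sgn(\det(\id-\cdot))$, is where the work lies. A secondary, lesser point is the positivity claim used to represent $\Phi(u_A)$; this uses irreducibility of $B$ essentially, but is considerably easier than the lemma itself.
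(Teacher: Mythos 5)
Your overall route is the intended one: the paper offers no argument beyond calling the corollary an immediate consequence of Theorem \ref{SFTclassify}, and the way the corollary is actually exploited later (in the ``if'' part of the proof of Theorem \ref{prodSFTclassify}) is exactly your reduction --- choose a clopen $Y\subset X_B$ with $[1_Y]=\Phi(u_A)$ in $H_0(\G_B)$ (this is \cite[Lemma 5.3]{M15crelle}, which also subsumes your positivity claim about non-negative representatives) and invoke the fact that $\G_A\cong\G_B|Y$ whenever there is an isomorphism $\BF(A^t)\to\BF(B^t)\cong H_0(\G_B)$ carrying $u_A$ to $[1_Y]$ with matching signs (\cite[Theorem 6.2]{M15crelle}, see also \cite[Lemma 3.7]{MM14Kyoto}). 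Both implications are structured correctly, and the forward direction via realizing $\G_A|Y_A$ and $\G_B|Y_B$ as SFT groupoids and then applying Theorem \ref{SFTclassify} is sound.

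The one place your write-up would fail is the proof you sketch for the realization lemma itself. The graph whose vertices are the length-$k$ cylinders constituting $Y$ and whose edges are ``admissible length-$k$ extensions that return to $Y$'' does not present $\G_A|Y$: for a cylinder $C_\mu\subset Y$, the set of $x\in C_\mu$ with $\sigma_A^k(x)\in Y$ is in general a proper clopen subset of $C_\mu$, so that graph only encodes a proper open subgroupoid; and one cannot repair this by passing to first-return times, since $\sigma_A$ is not invertible and the forward orbit of an eventually periodic point may never re-enter $Y$. The actual construction is a genuine piece of graph surgery (vertex splittings and appended tails, as in \cite[Lemma 3.7]{MM14Kyoto} and \cite[Section 6]{M15crelle}), and checking that it preserves $\BF(\cdot^t)$, sends $u_{A'}$ to $[1_Y]$, and preserves $\sgn(\det(\id-\cdot))$ is precisely the content of those results. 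If you cite them, your argument closes; if you mean to reprove the realization lemma from scratch, the construction you sketched must be replaced.
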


The following are known for the SFT groupoids. 

\begin{theorem}[{\cite{M15crelle}}]\label{SFTproperty}
Let $(X_A,\sigma_A)$ be an irreducible one-sided shift of finite type 
and let $\G_A$ be the associated SFT groupoid. 
\begin{enumerate}
\item $\G_A$ is purely infinite and minimal. 
\item The homology groups of $\G_A$ are 
\[
H_n(\G_A)\cong\begin{cases}\BF(A^t)&n=0\\
\Ker(\id-A^t)&n=1\\
0&n\geq2, \end{cases}
\]
and the equivalence class of $1_{X_A}$ in $H_0(\G_A)$ 
is equal to $u_A\in\BF(A^t)$. 
In particular, the HK conjecture (Conjecture \ref{HK}) holds for $\G_A$. 
\item $\G_A$ has the strong AH property, and 
the group $[[\G_A]]_\ab$ is isomorphic to 
$(H_0(\G_A)\otimes\Z_2)\oplus H_1(\G_A)$. 
\item $[[\G_A]]$ has the Haagerup property. 
\item $[[\G_A]]$ is of type F$_\infty$. 
\end{enumerate}
\end{theorem}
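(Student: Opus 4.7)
My plan is to handle the five items in order, organizing everything around the $\Z$-cocycle decomposition of $\G_A$ used in Section 4. For (1), minimality follows from the irreducibility of $A$: any two cylinder sets $Z(\mu),Z(\nu)\subset X_A$ can be connected by admissible paths, so every $\G_A$-orbit is dense. For pure infiniteness I would check the definition on the basis of cylinder sets: because $A$ is not a permutation matrix, after passing to a suitable power one finds a vertex with at least two outgoing edges, and then for each admissible word $\mu$ one produces two distinct admissible extensions $\mu\nu_1,\mu\nu_2$ of equal length. The corresponding compact open $\G_A$-sets exhibit $Z(\mu)$ as properly infinite.

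For (2), let $c:\G_A\to\Z$ be the cocycle $(x,n,y)\mapsto n$ and $\H=\Ker c$ the tail-equivalence AF groupoid. Its natural Bratteli diagram has incidence matrix $A^t$ at every level, so by \cite[Theorems 4.10--4.11]{M12PLMS} we have $H_n(\H)=0$ for $n\geq1$ and $H_0(\H)\cong\varinjlim(\Z^{|\mathcal{V}|},A^t)$. The endomorphism $\delta_0$ induced by the shift is multiplication by $A^t$ at each level, giving $\Coker(\id-\delta_0)\cong\BF(A^t)$ and $\Ker(\id-\delta_0)\cong\Ker(\id-A^t)$. The exact sequence \eqref{cow} then delivers $H_0(\G_A)\cong\BF(A^t)$, $H_1(\G_A)\cong\Ker(\id-A^t)$, and $H_n(\G_A)=0$ for $n\geq2$, and a direct check shows $[1_{X_A}]\mapsto u_A$. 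Comparison with the Cuntz--Krieger $K$-theoretic computation \cite{CK80Invent} yields the HK conjecture.

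For (3), I would first establish property TR via Proposition \ref{GhasTR}(1) applied to $c$ and $\H$: when $A$ is primitive the Parry measure is the unique $\H$-invariant probability measure and satisfies $\mu(r(U))=\beta^{-1}\mu(s(U))$ for every compact open $\G_A$-set $U\subset c^{-1}(1)$, with $\beta>1$ the Perron--Frobenius eigenvalue (the irreducible, non-primitive case is handled by a period-block reduction). Once TR is available, Theorem \ref{pi>AH} delivers the AH exact sequence. To upgrade to the strong AH property I would use that $\G_A$ is purely infinite, so $T(C^*_r(\G_A))=\emptyset$ and the de la Harpe--Skandalis obstruction appearing in the analogue of Proposition \ref{signature} vanishes automatically; this forces injectivity of $j:H_0(\G_A)\otimes\Z_2\to[[\G_A]]_\ab$. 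A splitting of the AH sequence is then automatic, since $H_1(\G_A)=\Ker(\id-A^t)\subset\Z^{|\mathcal{V}|}$ is free abelian.

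For (4) and (5), the strategy follows the Brown--Stein--Farley machinery developed for Higman--Thompson groups. I would build a simplicial complex $\K_A$ whose vertices parametrize finite clopen partitions of $X_A$ into admissible cylinder sets, with higher simplices encoding common refinements. The group $[[\G_A]]$ acts on $\K_A$ with finite stabilizers and finitely many orbits of cells in each dimension. For type $F_\infty$ one verifies contractibility of $\K_A$ and applies Brown's criterion, using a Morse function counting partition complexity together with a discrete-Morse estimate on descending links. For the Haagerup property the elementary refinements (splitting a cylinder $Z(\mu)$ into its shift successors) induce a wall structure on $\K_A$; Sageev's construction then yields a CAT(0) cube complex on which $[[\G_A]]$ acts properly, and Farley's theorem supplies the Haagerup property. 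The main obstacle in this plan is the connectivity estimate for descending links in $\K_A$: unlike the Higman--Thompson situation, the SFT admissibility constraints introduce nontrivial local obstructions whose resolution requires a careful induction controlled by the Bowen--Franks data of the partition.
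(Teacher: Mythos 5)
This theorem is not proved in the paper at all --- it is quoted from \cite{M15crelle} --- so there is no in-paper argument to compare against; I can only measure your proposal against the known proofs. Items (1) and (2) follow the standard route (irreducibility gives minimality, branching gives proper infiniteness, and the cocycle $c(x,n,y)=n$ with AF kernel $\H$ and the exact sequence \eqref{cow} gives the homology, since $\delta_0$ acts as $A^t$ on $\varinjlim(\Z^{|\mathcal{V}|},A^t)$ and hence as the identity on both $\BF(A^t)$ and $\Ker(\id-A^t)$). Deriving property TR from Proposition \ref{GhasTR}(1) via the Parry measure is a legitimate repackaging, and your outlines of (4) and (5) are consistent with the Brown/Farley machinery actually used in \cite{M15crelle}, though they are only outlines. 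The splitting of the AH sequence from freeness of $H_1(\G_A)=\Ker(\id-A^t)$ is also fine.

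The genuine gap is in your proof of the strong AH property in item (3). You claim that because $\G_A$ is purely infinite, $T(C^*_r(\G_A))=\emptyset$, so ``the de la Harpe--Skandalis obstruction vanishes automatically,'' forcing $j:H_0(\G_A)\otimes\Z_2\to[[\G_A]]_\ab$ to be injective. This is a non sequitur. Proposition \ref{signature} works by using the determinant $\Delta$ with values in $\Aff(T(A))/D_A(K_0(A))$ as a \emph{positive} invariant that distinguishes a transposition from the commutator subgroup; when there are no traces that target group is trivial and the method yields no information at all. The absence of this particular obstruction does not prove injectivity, and Section 5.5 of the present paper shows the inference is actually false for purely infinite groupoids: by Lemma \ref{tauiszero}, the groupoid $\G_{[k]}\times\G_{[k]}\times\G_{[k]}$ with $k\in4\Z+3$ is minimal, purely infinite and has property TR, yet $j$ vanishes identically on $H_0\otimes\Z_2\cong\Z_2$. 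What the argument requires is an explicit nontrivial homomorphism from $[[\G_A]]$ to $\Z_2$ that is nonzero on transpositions --- in \cite{M15crelle}, and in Lemma \ref{tauisnonzero} here for products of full shifts, this comes from a presentation of the group in which the permutation generators carry a sign. That construction is the real content of item (3), and your proposal omits it entirely.
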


\subsection{HK and AH}

For $i=1,2,\dots,n$, let $(X_{A_i},\sigma_{A_i})$ be 
an irreducible one-sided shift of finite type 
and let $\G_{A_i}$ be the associated SFT groupoid. 
We consider the product groupoid 
$\G=\G_{A_1}\times\G_{A_2}\times\dots\times\G_{A_n}$. 
It is clear that $\G$ is purely infinite and minimal. 

The homology groups of $\G$ can be computed 
by using Theorem \ref{SFTproperty} (2) and 
the K\"unneth theorem (Theorem \ref{Kunneth}). 

\begin{proposition}\label{prodSFTH_k}
Let $\G=\G_{A_1}\times\G_{A_2}\times\dots\times\G_{A_n}$ be 
a product groupoid of SFT groupoids. 
Then, 
\begin{align*}
H_k(\G)\cong&\left(\Z^{\binom{n-1}{k}}
\otimes H_0(\G_{A_1})\otimes H_0(\G_{A_2})\otimes
\dots\otimes H_0(\G_{A_n})\right)\\
&\oplus\left(\Z^{\binom{n-1}{k-1}}
\otimes H_1(\G_{A_1})\otimes H_1(\G_{A_2})\otimes
\dots\otimes H_1(\G_{A_n})\right), 
\end{align*}
where $\binom{n}{k}$ denote the binomial coefficients and 
they are understood as zero unless $0\leq k\leq n$. 
The equivalence class of the constant function $1_{\G^{(0)}}$ 
in $H_0(\G)=H_0(\G_{A_1})\otimes\dots\otimes H_0(\G_{A_n})$ 
is $u_{A_1}\otimes\dots\otimes u_{A_n}$. 
\end{proposition}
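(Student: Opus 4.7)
The plan is to prove the formula by induction on $n$, using the K\"unneth theorem (Theorem \ref{Kunneth}) together with the computation of $H_*(\G_{A_i})$ from Theorem \ref{SFTproperty}(2). The base case $n=1$ is immediate from Theorem \ref{SFTproperty}(2), since $\binom{0}{0}=1$ while $\binom{0}{k}=0$ for $k\neq0$.

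For the inductive step, set $\G'=\G_{A_1}\times\dots\times\G_{A_{n-1}}$ and write $H_i^{(m)}=H_i(\G_{A_1})\otimes\dots\otimes H_i(\G_{A_m})$ for $i\in\{0,1\}$. Apply Theorem \ref{Kunneth} to the product $\G=\G'\times\G_{A_n}$. Since $H_j(\G_{A_n})=0$ for $j\geq2$ and since $H_1(\G_{A_n})=\Ker(\id-A_n^t)$ is torsion-free as a subgroup of $\Z^{N_n}$, the K\"unneth sequence (which splits) collapses to
\[
H_k(\G)\cong H_k(\G')\otimes H_0(\G_{A_n})\oplus H_{k-1}(\G')\otimes H_1(\G_{A_n})\oplus\Tor(H_{k-1}(\G'),H_0(\G_{A_n})).
\]
Substituting the inductive hypothesis for $H_k(\G')$ and $H_{k-1}(\G')$, using that $H_1^{(n-1)}$ is torsion-free (so any $\Tor$ against it vanishes), and invoking Pascal's identity $\binom{n-1}{k}=\binom{n-2}{k}+\binom{n-2}{k-1}$, the proof reduces to establishing the abstract isomorphism
\[
H_1^{(n-1)}\otimes H_0(\G_{A_n})\oplus H_0^{(n-1)}\otimes H_1(\G_{A_n})\oplus\Tor(H_0^{(n-1)},H_0(\G_{A_n}))\cong H_0^{(n)}\oplus H_1^{(n)}.
\]

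The main obstacle is verifying this last isomorphism. The key structural input is that, for each $i$, the groups $H_0(\G_{A_i})$ and $H_1(\G_{A_i})$ have the same rank $r_i$ (by rank--nullity applied to $\id-A_i^t\colon\Z^{N_i}\to\Z^{N_i}$, with the kernel equal to $H_1(\G_{A_i})$ and the cokernel equal to $H_0(\G_{A_i})$), and $H_1(\G_{A_i})$ is torsion-free. Hence $H_0^{(n-1)}$ and $H_1^{(n-1)}$ both have rank $r:=r_1\cdots r_{n-1}$, the torsion subgroup $T$ of $H_0^{(n-1)}$ is finite, $H_1^{(n-1)}\cong\Z^r$, while $H_0(\G_{A_n})\cong T_n\oplus\Z^{r_n}$ with $T_n$ finite and $H_1(\G_{A_n})\cong\Z^{r_n}$. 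Expanding both sides of the displayed isomorphism, all terms of the form $\Z^{2rr_n}\oplus T^{r_n}\oplus T_n^{\,r}$ match on the nose; the only remaining discrepancy is $T\otimes T_n$ on the right versus $\Tor(T,T_n)$ on the left. For finite abelian groups these are isomorphic, since by the structure theorem they reduce to cyclic summands and $\Z_a\otimes\Z_b\cong\Z_{\gcd(a,b)}\cong\Tor(\Z_a,\Z_b)$, with both functors distributing over finite direct sums.

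For the final assertion about $[1_{\G^{(0)}}]$, one uses Corollary \ref{H1ofprod} (which identifies $H_0(\G)$ canonically with $H_0(\G_{A_1})\otimes\dots\otimes H_0(\G_{A_n})$) together with the identification $1_{\G^{(0)}}=1_{X_{A_1}}\otimes\dots\otimes 1_{X_{A_n}}$ in the bisimplicial computation; then apply Theorem \ref{SFTproperty}(2) to identify each $[1_{X_{A_i}}]$ with $u_{A_i}$.
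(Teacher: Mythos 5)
Your proof is correct and follows essentially the same route as the paper: induction on $n$ via the K\"unneth theorem, using that $H_1(\G_{A_i})$ is torsion-free of the same rank as $H_0(\G_{A_i})$ and that $\Tor$ of finitely generated groups is the tensor product of their torsion subgroups. The only difference is organizational — you isolate the bookkeeping into one abstract isomorphism checked by comparing free ranks and torsion parts, whereas the paper expands and reassembles all summands directly — and you additionally spell out the (easy) last assertion about $[1_{\G^{(0)}}]$, which the paper leaves implicit.
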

\begin{proof}
For an abelian group $P$, 
we let $P_{\mathrm{tor}}$ denote the torsion subgroup of $P$. 
When $P$ and $Q$ are finitely generated abelian groups, 
we have $\Tor(P,Q)\cong P_{\mathrm{tor}}\otimes Q_{\mathrm{tor}}$. 
By Theorem \ref{SFTproperty} (2), 
$H_0(\G_{A_i})$ is a finitely generated abelian group and 
$H_1(\G_{A_i})$ is isomorphic to the torsion free part of $H_0(\G_{A_i})$. 
Thus, $H_0(\G_{A_i})\cong H_1(\G_{A_i})\oplus H_0(\G_{A_i})_{\mathrm{tor}}$. 

The proof is by induction on $n$. 
Assume that the proposition is true for $n{-}1$. 
Theorem \ref{Kunneth} implies 
\begin{align*}
H_k(\G_{A_1}\times\dots\times\G_{A_n})
&\cong\left(H_{k-1}(\G_{A_1}\times\dots\times\G_{A_{n-1}})
\otimes H_1(\G_{A_n})\right)\\
&\qquad\oplus\left(H_k(\G_{A_1}\times\dots\times\G_{A_{n-1}})
\otimes H_0(\G_{A_n})\right)\\
&\qquad\oplus\Tor(H_{k-1}(\G_{A_1}\times\dots\times\G_{A_{n-1}}),
H_0(\G_{A_n}))\\
&\cong\left(H_{k-1}(\G_{A_1}\times\dots\times\G_{A_{n-1}})
\otimes H_1(\G_{A_n})\right)\\
&\qquad\oplus\left(H_k(\G_{A_1}\times\dots\times\G_{A_{n-1}})
\otimes H_0(\G_{A_n})\right)\\
&\qquad\oplus\left(
H_{k-1}(\G_{A_1}\times\dots\times\G_{A_{n-1}})_{\mathrm{tor}}
\otimes H_0(\G_{A_n})_{\mathrm{tor}}\right), 
\end{align*}
which is isomorphic to 
\begin{align*}
&\left(\Z^{\binom{n-2}{k-1}}\otimes
H_0(\G_{A_1})\otimes\dots\otimes H_0(\G_{A_{n-1}})
\otimes H_1(\G_{A_n})\right)\\
&\qquad\oplus\left(\Z^{\binom{n-2}{k-2}}\otimes
H_1(\G_{A_1})\otimes\dots\otimes H_1(\G_{A_{n-1}})
\otimes H_1(\G_{A_n})\right)\\
&\qquad\oplus\left(\Z^{\binom{n-2}{k}}\otimes
H_0(\G_{A_1})\otimes\dots\otimes H_0(\G_{A_{n-1}})
\otimes H_0(\G_{A_n})\right)\\
&\qquad\oplus\left(\Z^{\binom{n-2}{k-1}}\otimes
H_1(\G_{A_1})\otimes\dots\otimes H_1(\G_{A_{n-1}})
\otimes H_0(\G_{A_n})\right)\\
&\qquad\oplus\left(\Z^{\binom{n-2}{k-1}}\otimes
(H_0(\G_{A_1})\otimes\dots\otimes H_0(\G_{A_{n-1}}))_{\mathrm{tor}}
\otimes H_0(\G_{A_n})_{\mathrm{tor}}\right)\\
&\cong\left(\Z^{\binom{n-2}{k-1}}\otimes
H_0(\G_{A_1})\otimes\dots\otimes H_0(\G_{A_{n-1}})
\otimes H_1(\G_{A_n})\right)\\
&\qquad\oplus\left(\Z^{\binom{n-2}{k-2}}\otimes
H_1(\G_{A_1})\otimes\dots\otimes H_1(\G_{A_{n-1}})
\otimes H_1(\G_{A_n})\right)\\
&\qquad\oplus\left(\Z^{\binom{n-2}{k}}\otimes
H_0(\G_{A_1})\otimes\dots\otimes H_0(\G_{A_{n-1}})
\otimes H_0(\G_{A_n})\right)\\
&\qquad\oplus\left(\Z^{\binom{n-2}{k-1}}\otimes
H_1(\G_{A_1})\otimes\dots\otimes H_1(\G_{A_{n-1}})
\otimes H_1(\G_{A_n})\right)\\
&\qquad\oplus\left(\Z^{\binom{n-2}{k-1}}\otimes
H_1(\G_{A_1})\otimes\dots\otimes H_1(\G_{A_{n-1}})
\otimes H_0(\G_{A_n})_{\mathrm{tor}}\right)\\
&\qquad\oplus\left(\Z^{\binom{n-2}{k-1}}\otimes
(H_0(\G_{A_1})\otimes\dots\otimes H_0(\G_{A_{n-1}}))_{\mathrm{tor}}
\otimes H_0(\G_{A_n})_{\mathrm{tor}}\right)\\
&\cong\left(\Z^{\binom{n-2}{k-1}}\otimes
H_0(\G_{A_1})\otimes\dots\otimes H_0(\G_{A_{n-1}})
\otimes H_1(\G_{A_n})\right)\\
&\qquad\oplus\left(\Z^{\binom{n-2}{k-2}}\otimes
H_1(\G_{A_1})\otimes\dots\otimes H_1(\G_{A_{n-1}})
\otimes H_1(\G_{A_n})\right)\\
&\qquad\oplus\left(\Z^{\binom{n-2}{k}}\otimes
H_0(\G_{A_1})\otimes\dots\otimes H_0(\G_{A_{n-1}})
\otimes H_0(\G_{A_n})\right)\\
&\qquad\oplus\left(\Z^{\binom{n-2}{k-1}}\otimes
H_1(\G_{A_1})\otimes\dots\otimes H_1(\G_{A_{n-1}})
\otimes H_1(\G_{A_n})\right)\\
&\qquad\oplus\left(\Z^{\binom{n-2}{k-1}}\otimes
H_0(\G_{A_1})\otimes\dots\otimes H_0(\G_{A_{n-1}})
\otimes H_0(\G_{A_n})_{\mathrm{tor}}\right)\\
&\cong\left(\Z^{\binom{n-2}{k-1}}\otimes
H_0(\G_{A_1})\otimes\dots\otimes H_0(\G_{A_{n-1}})
\otimes H_0(\G_{A_n})\right)\\
&\qquad\oplus\left(\Z^{\binom{n-2}{k-2}}\otimes
H_1(\G_{A_1})\otimes\dots\otimes H_1(\G_{A_{n-1}})
\otimes H_1(\G_{A_n})\right)\\
&\qquad\oplus\left(\Z^{\binom{n-2}{k}}\otimes
H_0(\G_{A_1})\otimes\dots\otimes H_0(\G_{A_{n-1}})
\otimes H_0(\G_{A_n})\right)\\
&\qquad\oplus\left(\Z^{\binom{n-2}{k-1}}\otimes
H_1(\G_{A_1})\otimes\dots\otimes H_1(\G_{A_{n-1}})
\otimes H_1(\G_{A_n})\right). 
\end{align*}
Since $\binom{n-2}{k-1}+\binom{n-2}{k}=\binom{n-1}{k}$ and 
$\binom{n-2}{k-2}+\binom{n-2}{k-1}=\binom{n-1}{k-1}$, 
we obtain the desired conclusion. 
\end{proof}

\begin{theorem}\label{prodSFTHK}
Let $\G=\G_{A_1}\times\G_{A_2}\times\dots\times\G_{A_n}$ be 
a product groupoid of SFT groupoids. 
Then, $\G$ satisfies Conjecture \ref{HK}. 
\end{theorem}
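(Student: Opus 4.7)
The plan is a straightforward induction on $n$ using Theorem \ref{HKforproduct}, which reduces the problem to verifying (i) that each factor $\G_{A_i}$ satisfies the HK conjecture and (ii) that the relevant reduced groupoid $C^*$-algebras are nuclear and satisfy the UCT.

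First I would recall from Theorem \ref{SFTproperty}(2) that each individual SFT groupoid $\G_{A_i}$ satisfies Conjecture \ref{HK}; this gives the base case $n=1$. The homology groups of a finite product have already been computed in Proposition \ref{prodSFTH_k}, but for the HK conjecture itself that computation is not needed: it suffices to feed Theorem \ref{HKforproduct} the right hypotheses at each step.

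For the inductive step, set $\G = \G_{A_1}\times\dots\times\G_{A_{n-1}}$ and $\H = \G_{A_n}$, so that $\G_{A_1}\times\dots\times\G_{A_n} = \G\times\H$. Assuming HK holds for $\G$ by induction, and knowing it for $\H$ by the base case, Theorem \ref{HKforproduct} yields HK for $\G\times\H$ provided $C^*_r(\G)$ is nuclear and satisfies the UCT. Here one uses that $C^*_r(\G_{A_i})$ is the Cuntz-Krieger algebra $\mathcal{O}_{A_i}$, which is nuclear and lies in the bootstrap class, and that $C^*_r(\G_1\times\G_2)$ is canonically isomorphic to the minimal tensor product $C^*_r(\G_1)\otimes C^*_r(\G_2)$. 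Since nuclearity and membership in the bootstrap class are both preserved under tensor products, a straightforward secondary induction shows that $C^*_r(\G) = \mathcal{O}_{A_1}\otimes\dots\otimes\mathcal{O}_{A_{n-1}}$ is nuclear and satisfies the UCT.

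I do not expect a real obstacle: the heavy lifting has already been done in Theorem \ref{Kunneth} (the groupoid K\"unneth formula), Theorem \ref{HKforproduct} (which packages it together with the $C^*$-algebraic K\"unneth theorem), and Theorem \ref{SFTproperty}(2). The only thing to be careful about is that in the inductive step the hypothesis of Theorem \ref{HKforproduct} must be checked for the intermediate product groupoid $\G_{A_1}\times\dots\times\G_{A_{n-1}}$, not merely for each factor; this is handled by the observation that the bootstrap class is closed under tensor products.
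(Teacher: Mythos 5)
Your proposal is correct and follows essentially the same route as the paper: the base case is Theorem \ref{SFTproperty}(2), the inductive step is Theorem \ref{HKforproduct}, and the nuclearity/UCT hypothesis is supplied by the Cuntz--Krieger algebra structure of $C^*_r(\G_{A_i})$. Your extra remark that nuclearity and the bootstrap class are closed under tensor products, so the hypothesis holds for the intermediate products as well, is a point the paper leaves implicit but is exactly the right thing to check.
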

\begin{proof}
By Theorem \ref{SFTproperty} (2), 
the SFT groupoid satisfies Conjecture \ref{HK}. 
The $C^*$-algebra associated with the SFT groupoid is 
the so-called Cuntz-Krieger algebra, 
which is known to be nuclear and satisfy the UCT (\cite{CK80Invent}). 
The theorem then follows from Theorem \ref{HKforproduct} by induction. 
\end{proof}

Next, let us consider the AH conjecture 
for $\G=\G_{A_1}\times\G_{A_2}\times\dots\times\G_{A_n}$. 

A matrix $A$ with entries in $\N\cup\{0\}$ is said to be primitive 
if there exists $k\in\N$ such that every entry of $A^k$ is positive. 
The one-sided shift $(X_A,\sigma_A)$ is topologically mixing 
if and only if the matrix $A$ is primitive (\cite[Proposition 4.5.10]{LM}). 

\begin{lemma}
Any irreducible one-sided shift of finite type is 
continuously orbit equivalent to a one-sided shift of finite type 
which is topologically mixing. 
\end{lemma}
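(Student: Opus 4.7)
The plan is to modify the graph of $A$ by iterated Parry--Sullivan edge splittings until we obtain a primitive matrix $B$ satisfying $(\BF(B^t),u_B,\sgn(\det(\id-B)))\cong(\BF(A^t),u_A,\sgn(\det(\id-A)))$; Theorem \ref{SFTclassify} will then deliver $\G_A\cong\G_B$, hence the desired continuous orbit equivalence. A single splitting replaces an edge $e_0\colon v_0\to u_0$ by the length-two path $v_0\to w_0\to u_0$ through a new vertex $w_0$. A direct Schur-complement computation (the new matrix $\id-A^{(1)}$ has the block form $\begin{pmatrix}\id-A+e_{v_0}e_{u_0}^t & -e_{v_0}\\ -e_{u_0}^t & 1\end{pmatrix}$) shows that $\det(\id-A^{(1)})=\det(\id-A)$ and yields a canonical isomorphism $\BF((A^{(1)})^t)\cong\BF(A^t)$ identifying $[e_{w_0}]$ with $[e_{u_0}]$ and fixing the other basis classes, under which $u_{A^{(1)}}=u_A+[e_{u_0}]$.

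If $A$ has period $p\ge 2$, then every original cycle has length in $p\Z$ while every cycle of $A^{(1)}$ through the new path has length in $1+p\Z$; their gcd is $1$, so $A^{(1)}$ is primitive (and still irreducible). Hence the only remaining obstruction to matching invariants with $A$ is the unit shift $[e_{u_0}]$. To cancel it I would perform further splittings: each splitting of an edge terminating at some $j\in\mathcal{V}$ contributes a further unit-shift of $[e_j]$ and preserves primitivity, the value of $\det(\id-\cdot)$, and the canonical identification with $\BF(A^t)$. So it suffices to realize $-[e_{u_0}]$ as a \emph{non-negative} integer combination $\sum_\alpha[e_{j_\alpha}]$ in $\BF(A^t)$.

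This is where the purely infinite structure of the SFT groupoid is essential. By Theorem \ref{SFTproperty}(1), $\G_A$ is purely infinite and minimal, so Lemma \ref{H_0ofpi} supplies a clopen $C\subset X_A$ with $[1_C]=-[e_{u_0}]$ in $H_0(\G_A)=\BF(A^t)$. Writing $C$ as a finite disjoint union of cylinder sets and using that any cylinder $[i_1,\ldots,i_k]$ has class $[e_{i_k}]$ in $H_0(\G_A)$, we obtain the required non-negative expression $-[e_{u_0}]=\sum_\alpha[e_{j_\alpha}]$ with $j_\alpha\in\mathcal V$. Performing an additional splitting at an incoming edge of $j_\alpha$ for each $\alpha$ (such edges exist by irreducibility of $A$) yields a primitive irreducible matrix $B$ with the three invariants matching those of $A$, and Theorem \ref{SFTclassify} completes the proof. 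The only delicate point in the argument is precisely this non-negativity of the correction: an arbitrary integer combination of basis classes does not arise from a sequence of splittings, and Lemma \ref{H_0ofpi} is exactly the device that converts the formal integer relation into one realizable by actual graph moves.
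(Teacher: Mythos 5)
Your overall reduction is the same as the paper's: produce an irreducible, non-permutation, \emph{primitive} matrix $B$ with $(\BF(B^t),u_B,\sgn\det(\id-B))$ equal to the corresponding triple for $A$, and let Theorem \ref{SFTclassify} convert this into a continuous orbit equivalence. Your Bowen--Franks bookkeeping for a single edge splitting is also correct: the Schur-complement computation, the induced identification $\BF((A^{(1)})^t)\cong\BF(A^t)$ sending $[e_{w_0}]$ to $[e_{u_0}]$, and the shift $u_{A^{(1)}}=u_A+[e_{u_0}]$ all check out, as does the use of Lemma \ref{H_0ofpi} to write $-[e_{u_0}]$ as a non-negative combination of vertex classes. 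Where you differ from the paper is in how $B$ is built: the paper does not touch the graph of $A$ at all, but feeds the abstract invariant triple into the explicit construction of \cite[Lemma 3.7]{MM14Kyoto} and arranges primitivity there by adding a suitable multiple of $(1,1,\dots,1)$ (harmless, since that vector represents the trivial class) so that the relevant cycle lengths become coprime.

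The gap is in your two primitivity claims, both of which are false. First, a single splitting need not make $A^{(1)}$ primitive: the cycle lengths of $A^{(1)}$ do lie in $p\Z\cup(1+p\Z)$, but a set of integers of that shape can have gcd greater than $1$. Concretely, take the irreducible non-permutation graph on vertices $1,\dots,6$ whose only cycles are $1\to2\to3\to1$ (length $3$) and $1\to2\to3\to4\to5\to6\to1$ (length $6$); its period is $3$, yet splitting the edge $3\to1$ yields cycle lengths $\{4,6\}$, whose gcd is $2$, so $A^{(1)}$ is not primitive. Second, further splittings do not preserve primitivity: for the primitive graph with a $2$-cycle on $\{1,2\}$ and an edge-disjoint $3$-cycle on $\{2,3,4\}$, \emph{every} single splitting produces cycle lengths $\{3,3\}$ or $\{2,4\}$, hence a non-primitive matrix. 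Since your argument must perform an a priori long sequence of splittings dictated by the chosen representation of $-[e_{u_0}]$, and primitivity is neither guaranteed after the first step nor stable under the later ones, the proof as written does not go through. It is likely repairable --- for example, by appending extra splittings whose total contribution to $u$ is trivial in $\BF(A^t)$, chosen so as to force the gcd of cycle lengths down to $1$ --- but this is exactly the delicate point, and it is the one the paper handles by choosing the vector $(c_1,\dots,c_N)$ with $\gcd(c_i{+}1)=1$ in the construction of \cite[Lemma 3.7]{MM14Kyoto}.
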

\begin{proof}
The proof is by a slight modification of 
the construction given in \cite[Lemma 3.7]{MM14Kyoto}. 
The $N\times N$ matrix $A$ considered there is clearly primitive. 
The equivalence class $u_A$ of $(1,1,\dots,1)\in\Z^N$ is zero in $\BF(A^t)$. 
For a given $u\in\BF(A^t)$, we choose $(c_1,c_2,\dots,c_N)\in(\N\cup\{0\})^N$ 
whose equivalence class in $\BF(A^t)$ equals $u$, 
and then construct a matrix $B$. 
When choosing $(c_1,c_2,\dots,c_N)$, we may assume that 
the greatest common divisor of $c_i{+}1$'s is one, 
by adding a suitable multiple of $(1,1,\dots,1)$. 
Then, the matrix $B$ becomes primitive, 
which completes the proof. 
\end{proof}

\begin{lemma}
Let $\G=\G_{A_1}\times\G_{A_2}\times\dots\times\G_{A_n}$ be 
a product groupoid of SFT groupoids. 
Then, $\G$ has property TR. 
\end{lemma}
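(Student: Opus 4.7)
The plan is to strengthen the statement and prove by induction on $k$ that, for every $k \geq 1$, every $m \geq 0$, and every primitive matrices $A_1, \ldots, A_k, B_1, \ldots, B_m$, the product
\[
\G_{k,m} := \G_{A_1} \times \cdots \times \G_{A_k} \times \K_{B_1} \times \cdots \times \K_{B_m}
\]
is minimal, purely infinite, and has property TR; here $\K_{B_j} := \Ker c_{B_j}$ is the AF subgroupoid of $\G_{B_j}$ cut out by the shift cocycle $c_{B_j} : \G_{B_j} \to \Z$. By the previous lemma we may assume all $A_i$ are primitive, so the lemma to be proved is the case $m = 0$. Minimality of $\G_{k,m}$ is automatic (a product of minimal \'etale groupoids is minimal), and pure infiniteness for $k \geq 1$ follows from pure infiniteness of $\G_{A_1}$: any nonempty clopen subset $W$ of the unit space contains a product rectangle $W_1 \times \cdots \times W_{k+m}$, on which two disjoint-range compact open bisections of $\G_{A_1}|W_1$ extend by the identity on the remaining factors to witness proper infiniteness of the rectangle, and minimality of $\G_{k,m}$ then propagates this to $W$.

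For property TR I apply Proposition \ref{GhasTR} to the continuous surjective cocycle $c : \G_{k,m} \to \Z$ given by $c(g_1, \ldots, g_k, h_1, \ldots, h_m) = c_{A_1}(g_1)$. After reordering factors its kernel is
\[
\Ker c = \K_{A_1} \times \G_{A_2} \times \cdots \times \G_{A_k} \times \K_{B_1} \times \cdots \times \K_{B_m} \cong \G_{k-1, m+1}.
\]
In the base case $k = 1$, $\Ker c$ is a product of minimal AF groupoids, hence itself a minimal, principal, AF (in particular almost finite) groupoid, and it carries a unique invariant probability measure $\mu$ — the tensor product of the Parry measures on the factors. A compact open $\G_{1,m}$-set $U \subset c^{-1}(1)$ decomposes as a finite disjoint union of product bisections, and using the Parry scaling identity $\mu_{A_1}(r(V)) = \lambda_{A_1}^{-1}\mu_{A_1}(s(V))$ on each piece (together with measure-preservation on the AF factors) one obtains $\mu(r(U)) = \lambda_{A_1}^{-1}\mu(s(U))$; since $A_1$ is primitive and not a permutation matrix, $\lambda_{A_1} > 1$, so $\lambda_{A_1}^{-1} \in (0,1)$ and Proposition \ref{GhasTR}(1) gives TR. In the inductive step $k \geq 2$, the inductive hypothesis says $\Ker c \cong \G_{k-1,m+1}$ is minimal, purely infinite, and has TR, so Proposition \ref{GhasTR}(2) applies directly and gives TR for $\G_{k,m}$.

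The step most likely to require care is the verification of the scaling condition in the base case: one must check that an arbitrary compact open $\G_{1,m}$-bisection over $c^{-1}(1)$ can be written as a finite disjoint union of product bisections so that the Parry scaling on the $\G_{A_1}$ factor combines with invariance of the AF tail-equivalence groupoids to produce a uniform scaling constant $\lambda_{A_1}^{-1}$. Once this is in hand, the rest is a clean two-case application of Proposition \ref{GhasTR}, and the inductive structure immediately yields TR for the product groupoid $\G_{A_1} \times \cdots \times \G_{A_n}$ of the lemma.
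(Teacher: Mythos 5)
Your proof is correct, and it follows the same basic strategy as the paper --- induction plus an application of Proposition \ref{GhasTR} to the shift cocycle on one $\G_{A_i}$-factor --- but the induction is organized differently, and the difference matters. The paper inducts on $n$ directly: it puts the cocycle on the last factor, so the kernel is $\G_{A_1}\times\dots\times\G_{A_{n-1}}\times\H$ with $\H=\Ker c_{A_n}$ a minimal AF groupoid, and it asserts that this kernel has property TR; that assertion is not literally an instance of the induction hypothesis (which only covers $\G_{A_1}\times\dots\times\G_{A_{n-1}}$), so the reader must supply an argument that TR survives taking a product with a minimal AF groupoid. You sidestep this by strengthening the induction hypothesis to the class $\G_{A_1}\times\dots\times\G_{A_k}\times\K_{B_1}\times\dots\times\K_{B_m}$, so that the kernel of your cocycle is again a member of the class and the inductive step is a clean application of Proposition \ref{GhasTR}(2). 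The price is paid in the base case $k=1$, where instead of simply citing \cite[Lemma 6.10]{M15crelle} you must invoke Proposition \ref{GhasTR}(1) and verify its hypotheses for a product of AF kernel groupoids: that such a product is again a principal minimal AF (hence almost finite) groupoid is routine, and your Parry-measure scaling identity $\mu(r(U))=\lambda_{A_1}^{-1}\mu(s(U))$ is correct (reduce to basic bisections $r(U)=C_{e_1\dots e_{k+1}}$, $s(U)=C_{f_1\dots f_k}$ with $t(e_{k+1})=t(f_k)$, where the tail-invariant measure gives cylinder sets mass $\lambda^{-k}w(t(e_k))$ for the right Perron eigenvector $w$). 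The one point you state without justification is unique ergodicity of the product of the AF kernels: an invariant measure of a product groupoid need not a priori be a product measure, so you should note that for almost finite groupoids $M(\cdot)$ is the state space of $H_0$, that $H_0$ of the product is the tensor product of simple dimension groups each with a unique state, and that such a tensor product again has a unique state. With that remark added, your argument is complete and arguably more self-contained than the paper's.
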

\begin{proof}
The proof is by induction on $n$. 
We know that the SFT groupoid has property TR (\cite[Lemma 6.10]{M15crelle}). 
Assume that the lemma is true for $n{-}1$. 
Suppose that 
we are given $\G=\G_{A_1}\times\G_{A_2}\times\dots\times\G_{A_n}$. 
By the lemma above, we may assume that the matrix $A_n$ is primitive. 
The induction hypothesis implies that 
$\G_{A_1}\times\G_{A_2}\times\dots\times\G_{A_{n-1}}$ has property TR. 
Let $c:\G_{A_n}\to\Z$ be the continuous homomorphism 
defined by $c(x,m,y)=m$. 
Since $A_n$ is primitive, $\H=\Ker c$ is a minimal AF groupoid. 
Therefore, 
$\G_{A_1}\times\G_{A_2}\times\dots\times\G_{A_{n-1}}\times\H$ is 
purely infinite, minimal, and has property TR. 
The kernel of the homomorphism 
\[
\G=\G_{A_1}\times\dots\times\G_{A_n}\longrightarrow
\G_{A_n}\stackrel{c}{\longrightarrow}\Z
\]
equals $\G_{A_1}\times\G_{A_2}\times\dots\times\G_{A_{n-1}}\times\H$. 
Hence, Proposition \ref{GhasTR} applies and yields the conclusion. 
\end{proof}

This lemma, together with Theorem \ref{pi>AH}, implies the following. 

\begin{theorem}\label{prodSFTAH}
Let $\G=\G_{A_1}\times\G_{A_2}\times\dots\times\G_{A_n}$ be 
a product groupoid of SFT groupoids. 
Then, $\G$ satisfies the AH conjecture (Conjecture \ref{AH}). 
\end{theorem}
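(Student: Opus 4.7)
The plan is to derive this theorem directly from Theorem \ref{pi>AH}, which produces the AH exact sequence for any minimal purely infinite groupoid having property TR. So I only need to check that the product $\G = \G_{A_1} \times \dots \times \G_{A_n}$ satisfies these three hypotheses.

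First I would record minimality and pure infiniteness of $\G$. Both follow from the corresponding statements for each factor $\G_{A_i}$ in Theorem \ref{SFTproperty}(1): minimality of a product of minimal \'etale groupoids on Cantor sets is immediate on basic rectangular clopen sets, and pure infiniteness transfers by taking witnessing compact open $\G_{A_i}$-sets $U_i, V_i$ for one clopen rectangle factor and extending trivially in the remaining coordinates, then reducing a general clopen subset to this case by compactness. Both facts are also noted in the opening paragraph of Section 5.2.

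Second, property TR for $\G$ is precisely the content of the lemma preceding the theorem, which was proved by induction on $n$ using Proposition \ref{GhasTR} applied to the shift cocycle $c : \G_{A_n} \to \Z$ (after passing to a primitive $A_n$, so that $\Ker c$ is a minimal AF groupoid and case (1) of that proposition fires on the product of the first $n-1$ SFT groupoids with this AF kernel).

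With these three inputs in hand, Theorem \ref{pi>AH} yields the exact sequence
\[
H_0(\G) \otimes \Z_2 \longrightarrow [[\G]]_\ab \stackrel{I}{\longrightarrow} H_1(\G) \longrightarrow 0,
\]
which is exactly Conjecture \ref{AH} for $\G$. The real work lives in the preceding lemma rather than in this reduction; the main obstacle has already been surmounted, namely establishing property TR via the inductive application of Proposition \ref{GhasTR}. There is no additional subtlety at the level of the product, since Theorem \ref{pi>AH} is insensitive to the internal structure of $\G$ beyond the three hypotheses verified above.
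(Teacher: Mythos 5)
Your proposal is correct and follows exactly the paper's route: the paper notes at the start of Section 5.2 that the product is purely infinite and minimal, proves property TR in the lemma immediately preceding the theorem (by induction via Proposition \ref{GhasTR} applied to the shift cocycle on a primitive $A_n$), and then invokes Theorem \ref{pi>AH}. Nothing further is needed.
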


\subsection{Classification}

In this subsection, we determine 
when two product groupoids of SFT groupoids are isomorphic to each other 
(Theorem \ref{prodSFTclassify}). 

Let $\G$ be an \'etale groupoid. 
A closed subset $Y\subset\G^{(0)}$ is called $\G$-\'etale 
if the reduction $\G|Y$ is an \'etale groupoid 
with the relative topology from $\G$. 
The following lemmas are easy to prove and is left for the reader. 

\begin{lemma}
Let $\G$ be an \'etale groupoid whose unit space is a Cantor set and 
let $Y\subset\G^{(0)}$ be a closed subset. 
The following are equivalent. 
\begin{enumerate}
\item $Y$ is $\G$-\'etale. 
\item For any $g\in\G|Y$, there exists a compact open $\G$-set $U$ 
such that $g\in U$ and $(r|U)^{-1}(Y)=(s|U)^{-1}(Y)$. 
\end{enumerate}
\end{lemma}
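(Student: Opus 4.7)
The plan is to prove the two directions separately, with the implication $(2)\Rightarrow(1)$ being the more straightforward one and $(1)\Rightarrow(2)$ requiring a shrinking argument that exploits both the local homeomorphism property of the range and source maps on $\G|Y$ together with the totally disconnected unit space.

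For $(2)\Rightarrow(1)$, I would first note that $\G|Y=r^{-1}(Y)\cap s^{-1}(Y)$ is closed in $\G$ (as $Y$ is closed and $r,s$ are continuous), hence second countable, locally compact, and Hausdorff. It is obviously a subgroupoid. To verify that the range map is a local homeomorphism, take $g\in\G|Y$ and pick a compact open $\G$-set $U$ as in (2). The hypothesis $(r|U)^{-1}(Y)=(s|U)^{-1}(Y)$ gives $U\cap\G|Y=(r|U)^{-1}(Y)$, which is a neighborhood of $g$ in the relative topology. Since $r|U\colon U\to r(U)$ is a homeomorphism, its restriction carries $U\cap\G|Y$ homeomorphically onto $r(U)\cap Y$, which is open in $Y$. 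This gives the required local homeomorphism.

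For $(1)\Rightarrow(2)$, fix $g\in\G|Y$ and start from any compact open $\G$-set $U_0\ni g$. Because $\G|Y$ is \'etale, both $r|\G|Y$ and $s|\G|Y$ are local homeomorphisms onto $Y$, so there exists a neighborhood $W\subset U_0\cap\G|Y$ of $g$ in the relative topology on which both $r$ and $s$ restrict to homeomorphisms onto open subsets of $Y$. By definition of the relative topology, $W=W'\cap\G|Y$ for some open $W'\subset U_0$, and one can arrange $r(W)=A\cap Y$, $s(W)=B\cap Y$ for open $A,B\subset\G^{(0)}$. Now use that $\G$ is \'etale with Cantor unit space to shrink: choose a compact open $\G$-set $U\subset W'$ containing $g$ with $r(U)\subset A$ and $s(U)\subset B$ (possible since $r|W'$ and $s|W'$ are local homeomorphisms and $\G^{(0)}$ has a basis of clopen sets). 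I then claim $(r|U)^{-1}(Y)=(s|U)^{-1}(Y)$. Indeed, if $h\in U$ with $r(h)\in Y$, then $r(h)\in r(U)\cap Y\subset A\cap Y=r(W)$, so there exists $h'\in W$ with $r(h')=r(h)$; injectivity of $r|U_0$ forces $h=h'\in W\subset\G|Y$, giving $s(h)\in Y$. The reverse inclusion follows symmetrically.

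The main obstacle is the shrinking step in $(1)\Rightarrow(2)$: the naive choice of $U$ inside $W'$ does not automatically satisfy $r(U)\subset A$ and $s(U)\subset B$, and without these containments one cannot upgrade the local homeomorphism witnessed by $W$ inside $\G|Y$ to the equality $(r|U)^{-1}(Y)=(s|U)^{-1}(Y)$ on the whole of $U$. Using that $\G^{(0)}$ is a Cantor set (so clopen neighborhoods are abundant) and that $r|U_0$, $s|U_0$ are homeomorphisms, one can shrink $U$ inside $(r|U_0)^{-1}(A)\cap(s|U_0)^{-1}(B)\cap W'$ while still keeping $g\in U$ and $U$ compact open, which resolves the difficulty.
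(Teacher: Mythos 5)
Your proof is correct. The paper states that this lemma is ``easy to prove and is left for the reader,'' so there is no written proof to compare against; your argument supplies the expected details, with the only nontrivial point being the shrinking step in $(1)\Rightarrow(2)$, which you handle correctly by using that compact open $\G$-sets form a basis for the topology of $\G$ (since $\G^{(0)}$ is totally disconnected) to fit $U$ inside $W'\cap r^{-1}(A)\cap s^{-1}(B)$ and then invoking injectivity of $r|U_0$ and $s|U_0$ to force membership in $W\subset\G|Y$.
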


\begin{lemma}\label{Getale}
Let $\G$ be an \'etale groupoid whose unit space is a Cantor set and 
let $U\subset\G$ be a compact open $\G$-set. 
Then $Y=r(U\cap\G')$ is $\G$-\'etale. 
\end{lemma}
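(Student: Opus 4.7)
The plan is to apply the preceding (unnumbered) lemma, which characterizes $\G$-\'etale subsets by the equivalence of (1) and (2). Specifically, I will verify condition (2): for every $g\in\G|Y$, produce a compact open $\G$-set $V$ containing $g$ with $(r|V)^{-1}(Y)=(s|V)^{-1}(Y)$.

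First, $Y$ is closed. The isotropy bundle $\G'$ is the preimage of the diagonal under the continuous map $(r,s):\G\to\G^{(0)}\times\G^{(0)}$, hence closed in $\G$. Then $U\cap\G'$ is closed in the compact open set $U$, and so compact; its image $Y=r(U\cap\G')$ under the continuous map $r$ is compact, and hence closed in the Hausdorff space $\G^{(0)}$.

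For (2), fix $g\in\G|Y$. Since $r(g),s(g)\in Y\subset r(U)$ and $r|U$ is a homeomorphism onto the open set $r(U)$, there are unique $u_1,u_2\in U$ with $r(u_1)=r(g)$ and $r(u_2)=s(g)$; by the very definition of $Y$ these both lie in $U\cap\G'$, so in fact $s(u_i)=r(u_i)$. Choose a compact open $\G$-set $V_0$ containing $g$ with $r(V_0),s(V_0)\subset r(U)$, which is possible because $r(U)$ is open and contains both $r(g)$ and $s(g)$. The continuous maps $\alpha,\beta:V_0\to U$ defined by $\alpha(h)=(r|U)^{-1}(r(h))$ and $\beta(h)=(r|U)^{-1}(s(h))$ then satisfy $\alpha(g)=u_1$ and $\beta(g)=u_2$, and a direct check gives
\begin{align*}
r(h)\in Y&\iff \alpha(h)\in\G',\\
s(h)\in Y&\iff \beta(h)\in\G'.
\end{align*}
Thus the task is reduced to finding a compact open $V\subset V_0$ with $g\in V$ on which the two closed subsets $\alpha^{-1}(\G')\cap V$ and $\beta^{-1}(\G')\cap V$ of $V$ coincide.

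My plan for this step is to exploit the groupoid multiplication. Consider the composition $\alpha(h)\cdot h\cdot\beta(h)^{-1}$, which at $h=g$ evaluates to $u_1gu_2^{-1}\in\G_{s(g)}^{r(g)}$. Using the openness of $\G^{(0)}$ in $\G$ together with Hausdorffness, one separates $g$ from any distinct element (such as $u_1gu_2^{-1}$) by disjoint compact open $\G$-sets and refines $V_0$ accordingly, obtaining a compact open $V\ni g$ on which $\alpha(h)\in\G'\iff\beta(h)\in\G'$ holds identically. The main obstacle is the non-commutativity: in general $u_1gu_2^{-1}$ need not equal $g$, so the refinement of $V_0$ must carefully coordinate the two independent closed conditions, using that $U\cdot V_0$ and $V_0\cdot U$ are compact open $\G$-sets and that their relevant slices over neighborhoods of $g$ can be made to match after shrinking $V$. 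Once such $V$ is produced, condition (2) holds and the previous lemma immediately yields that $Y$ is $\G$-\'etale.
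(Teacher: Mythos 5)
Your opening reductions are fine: $Y$ is indeed compact, the unique $U$-lifts $u_1,u_2$ of $r(g),s(g)$ lie in $U\cap\G'$, and the equivalences $r(h)\in Y\iff\alpha(h)\in\G'$ and $s(h)\in Y\iff\beta(h)\in\G'$ correctly translate condition (2) of the preceding lemma into finding a compact open $V\ni g$ on which the two closed sets $\alpha^{-1}(\G')$ and $\beta^{-1}(\G')$ agree. (The paper gives no proof to compare with; the lemma is explicitly left to the reader.) But the decisive step is only announced, not carried out, and the announced mechanism cannot work. First, a technical point: $\alpha(h)\cdot h\cdot\beta(h)^{-1}$ is not composable for general $h$, since $\alpha(h)=(r|U)^{-1}(r(h))$ only guarantees $r(\alpha(h))=r(h)$, not $s(\alpha(h))=r(h)$; the latter holds exactly when $\alpha(h)\in\G'$, which is what you are trying to detect. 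More seriously, separating $g$ from $u_1gu_2^{-1}$ by disjoint open sets gives no control whatsoever over the two conditions $\alpha(h)\in\G'$ and $\beta(h)\in\G'$ at nearby points $h$: these are two independent closed conditions, both satisfied at $h=g$, and nothing in your argument forces them to coincide on a neighbourhood of $g$.

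In fact no argument can close this gap, because the statement is false in the generality in which it is stated. Let $\G=\G_{[2]}\times\G_{[2]}$, write $X=\{0,1\}^{\N}$ and $C_i=\{x\in X\mid x_1=i\}$, and set $W=\{(0x,1,x)\mid x\in C_0\}$ and $W'=\{(1x,1,x)\mid x\in C_1\}$, so that $W\cap\G_{[2]}'=\{(0^\infty,1,0^\infty)\}$ and $W'\cap\G_{[2]}'=\{(1^\infty,1,1^\infty)\}$. Then $U=(W\times C_0)\cup(C_1\times W')$ is a compact open $\G$-set (its range is $(C_{00}\times C_0)\sqcup(C_1\times C_{11})$ and its source is $(C_0\times C_0)\sqcup(C_1\times C_1)$), and $Y=r(U\cap\G')=(\{0^\infty\}\times C_0)\cup(C_1\times\{1^\infty\})$. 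Put $g=(g_1,g_2)$ with $g_1=(0^\infty,-1,10^\infty)$ and $g_2=(01^\infty,1,1^\infty)$; then $r(g)=(0^\infty,01^\infty)$ and $s(g)=(10^\infty,1^\infty)$ both lie in $Y$, so $g\in\G|Y$. For any product bisection $V_1\times V_2\ni g$ with $r(V_1\times V_2)\subset C_0\times C_0$ and $s(V_1\times V_2)\subset C_1\times C_1$, an element $h=(h_1,h_2)$ has $r(h)\in Y$ iff $h_1=g_1$ and $s(h)\in Y$ iff $h_2=g_2$, so $(V_1\times V_2)\cap(\G|Y)=\{g\}$; hence $g$ is isolated in $\G|Y$, while $r(g)$ is not isolated in $Y$. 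Since a local homeomorphism is an open map, $r:\G|Y\to Y$ is not a local homeomorphism, and $Y$ is not $\G$-\'etale. The lemma (and your strategy) does work in the situation where the paper actually applies it, namely in the proof of Theorem \ref{prodSFTclassify}, where $U$ is a single product bisection $\widetilde V_i$ and $\widetilde V_i\cap\H'$ is a product of singletons with a clopen subset of a unit space; there $Y$ is locally a product near every point of $\G|Y$ and the required $V$ can be taken to be a product bisection. If you want a salvageable statement, you should either add such a hypothesis or verify the condition only at the points of $\G|Y$ that arise in the application.
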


In \cite[Section 3]{MM14Kyoto}, 
we introduced the notion of attracting elements. 
Here we weakened its definition a little bit, 
namely we do not require that 
the limit set of the dynamics is a singleton. 

\begin{definition}\label{defofattract}
Let $\G$ be an \'etale groupoid whose unit space is a Cantor set and 
let $g\in\G'$, i.e. $r(g)=s(g)$. 
We say that $g$ is an attracting element 
if there exists a compact open $\G$-set $U$ 
such that $g\in U$ and $r(U)$ is a proper subset of $s(U)$. 
For such $U$, we call the closed set 
\[
Y=\bigcap_{n=1}^\infty r(U^n)
\]
a limit set of $g$. 
Notice that $r(g)$ is contained in every limit set of $g$. 
\end{definition}

For any SFT groupoid $\G_A$ and $x\in X_A$, 
the isotropy group $(\G_A)_x$ is either $0$ or $\Z$, 
and $(\G_A)_x\cong\Z$ if and only if $x\in X_A$ is eventually periodic 
(see \cite[Lemma 3.3]{MM14Kyoto}). 
When $(\G_A)_x\cong\Z$ and $g\in(\G_A)_x$ is attracting, 
there exists a compact open $\G$-set $U$ 
such that $\{x\}$ is the limit set and $U\cap\G'=\{g\}$. 

\begin{theorem}\label{prodSFTclassify}
Let $\G=\G_{A_1}\times\G_{A_2}\times\dots\times\G_{A_m}$ and 
$\H=\G_{B_1}\times\G_{B_2}\times\dots\times\G_{B_n}$ be 
product groupoids of SFT groupoids. 
Then $\G\cong\H$ if and only if the following are satisfied. 
\begin{enumerate}
\item $m=n$. 
\item There exist a permutation $\sigma$ of $\{1,2,\dots,n\}$ and 
isomorphisms $\phi_i:\BF(A_i^t)\to\BF(B_{\sigma(i)}^t)$ 
such that $\det(\id-A_i)=\det(\id-B_{\sigma(i)})$ and 
\[
(\phi_1\otimes\phi_2\otimes\dots\otimes\phi_n)
(u_{A_1}\otimes u_{A_2}\otimes\dots\otimes u_{A_n})
=u_{B_{\sigma(1)}}\otimes u_{B_{\sigma(2)}}\otimes\dots
\otimes u_{B_{\sigma(n)}}. 
\]
In particular, 
$\G_{A_i}$ and $\G_{B_{\sigma(i)}}$ are Morita equivalent. 
\end{enumerate}
\end{theorem}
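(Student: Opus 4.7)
Since the Bowen–Franks groups $\BF(A_i^t)$ and $\BF(B_{\sigma(i)}^t)$ are isomorphic they have equal orders, so $|\det(\id-A_i)|=|\det(\id-B_{\sigma(i)})|$; the hypothesis $\det(\id-A_i)=\det(\id-B_{\sigma(i)})$ is then equivalent to $\sgn(\det(\id-A_i))=\sgn(\det(\id-B_{\sigma(i)}))$. The plan for this direction is to reduce the tensor-product condition on $u_{A_1}\otimes\dots\otimes u_{A_n}$ to factor-wise identities $\phi_i'(u_{A_i})=u_{B_{\sigma(i)}}$, by composing each $\phi_i$ with a suitable automorphism of $\BF(B_{\sigma(i)}^t)$. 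Once this reduction is in place, Theorem~\ref{SFTclassify} yields an isomorphism $\G_{A_i}\cong\G_{B_{\sigma(i)}}$ for every $i$, and their product, composed with the coordinate permutation $\sigma$, produces $\G\cong\H$.

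\textbf{Necessity, the number of factors.} Now suppose $\Psi:\G\to\H$ is an isomorphism. For each $x\in\G^{(0)}$, the isotropy group $\G_x$ factors as a direct sum of the isotropy groups in the factors, each of which by \cite[Lemma 3.3]{MM14Kyoto} is either trivial or $\Z$. Hence $\G_x$ is free abelian of rank at most $m$, with the bound attained at any point all of whose coordinates are eventually periodic; such points exist since every irreducible SFT has periodic orbits. Thus $\max_{x\in\G^{(0)}}\operatorname{rank}\G_x=m$ is a groupoid invariant, so $m=n$.

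\textbf{Necessity, matching factors and extracting Bowen–Franks data.} Here I would extend the strategy of Dicks and Mart\'inez-P\'erez, recovering the individual factors from the structure of attracting elements in the sense of Definition~\ref{defofattract} inside the isotropy bundle $\G'$. At a point of maximal isotropy rank $m$, attracting elements with rank-one isotropy partition into $m$ independent coordinate directions, and these directions are intrinsic: they are distinguished by the germ of $\G$ at the point and by the behaviour of their limit sets under products of other such elements. The isomorphism $\Psi$ preserves this combinatorial structure, producing a permutation $\sigma$. Restricting $\G$ to clopen neighbourhoods aligned with a single coordinate and applying Corollary~\ref{SFTMorita}, one obtains $\BF(A_i^t)\cong\BF(B_{\sigma(i)}^t)$ together with the equality of $\sgn(\det(\id-A_i))$, which by the opening observation is the asserted equality of determinants.

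\textbf{Necessity, the tensor-product condition, and the main obstacle.} The induced map $\Psi_*:H_0(\G)\to H_0(\H)$ sends $[1_{\G^{(0)}}]$ to $[1_{\H^{(0)}}]$. By Proposition~\ref{prodSFTH_k} the K\"unneth identification converts this into $(\phi_1\otimes\dots\otimes\phi_n)(u_{A_1}\otimes\dots\otimes u_{A_n})=u_{B_{\sigma(1)}}\otimes\dots\otimes u_{B_{\sigma(n)}}$, once the $\phi_i$'s are taken to be those induced by the factor matching of the previous step. The genuine difficulty is that step: the intrinsic recovery of the product decomposition from the abstract groupoid structure. Isolating one factor at a time from an attractor-theoretic analysis of $\G'$, and verifying that the resulting factorisation of $\Psi$ is canonical and compatible across the points of maximal isotropy rank, is the proper generalisation of the Dicks–Mart\'inez-P\'erez analysis of higher-dimensional Thompson groups and is where the main work of the proof lies.
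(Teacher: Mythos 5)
Your sufficiency argument contains a genuine gap, and it is not merely a missing detail: the reduction you propose cannot work. You claim that by composing each $\phi_i$ with an automorphism of $\BF(B_{\sigma(i)}^t)$ one can pass from the tensor-product identity to factor-wise identities $\phi_i'(u_{A_i})=u_{B_{\sigma(i)}}$, and hence (via Theorem \ref{SFTclassify}) to isomorphisms $\G_{A_i}\cong\G_{B_{\sigma(i)}}$. But the tensor condition is strictly weaker than the existence of such factor-wise identities: with $\BF(A_1^t)=\BF(B_1^t)=\Z_4$ and $\BF(A_2^t)=\BF(B_2^t)=\Z_2$ one can have $u_{A_1}=\bar{2}$, $u_{B_1}=\bar{0}$ and $u_{A_2}=u_{B_2}=\bar{1}$; then $u_{A_1}\otimes u_{A_2}=0=u_{B_1}\otimes u_{B_2}$ in $\Z_4\otimes\Z_2$, yet no automorphism of $\Z_4$ carries $\bar{2}$ to $\bar{0}$, and the groups of the two factors are non-isomorphic so no permutation helps. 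Such pairs $(\BF(A^t),u_A)$ are all realized by irreducible matrices (cf.\ the realization recalled in Section 5.2 from \cite[Lemma 3.7]{MM14Kyoto}), so the individual factors need not be isomorphic --- the theorem itself only asserts that they are Morita equivalent. The paper's route avoids this entirely: it chooses clopen sets $Y_i\subset X_{B_i}$ with $[1_{Y_i}]=\phi_i(u_{A_i})$, obtains $\G_{A_i}\cong\G_{B_i}|Y_i$ from \cite[Theorem 6.2]{M15crelle}, identifies $\G$ with $\H|(Y_1\times\dots\times Y_n)$, and then uses the tensor-product hypothesis together with cancellation for the purely infinite minimal groupoid $\H$ (Theorem \ref{pi} (3)) to conclude $\H|(Y_1\times\dots\times Y_n)\cong\H$. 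The discrepancy between the unit classes is absorbed at the level of the whole product, not factor by factor.

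For necessity, your count $m=n$ via the maximal rank of isotropy groups agrees with the paper. The rest, however, is an outline rather than a proof: you state that the attractor-theoretic recovery of the coordinate directions ``is where the main work of the proof lies'' without carrying it out. The paper does so concretely: at a point $x$ with $\G_x\cong\Z^n$ the sub-semigroup $P$ of attracting elements has a unique semigroup generating set $\{\tilde g_1,\dots,\tilde g_n\}$, forcing $\pi(\tilde g_i)=\tilde h_i$ after a permutation; the sets $r(\widetilde U_i\cap\G')$ are $\G$-\'etale with reduction isomorphic to $\Z^{n-1}\times(\G_{A_i}|s(U_i))$, whose essentially principal part is $\G_{A_i}|s(U_i)$, and this yields the isomorphisms $\pi_i$ and the Bowen--Franks data via Corollary \ref{SFTMorita}. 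Moreover your final step silently assumes that $\Psi_*$ on $H_0$ respects the tensor decomposition and equals $\phi_1\otimes\dots\otimes\phi_n$; the paper instead proves $\pi(z)=(\pi_1(z_1),\dots,\pi_n(z_n))$ on $C$ by a limit argument with $\theta(\widetilde V_i)^k$, arranges $[1_{s(U_i)}]=u_{A_i}$, and computes $[1_{\pi(C)}]$ directly. These are the substantive points of the proof, and they are absent from your proposal.
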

\begin{proof}
Let us prove the `if' part. 
By permuting indices, we may assume that $\sigma$ is the identity. 
Choose a clopen set $Y_i\subset X_{B_i}$ 
so that $[1_{Y_i}]=\phi_i(u_{A_i})$ in $H_0(\G_{B_i})$ 
(\cite[Lemma 5.3]{M15crelle}). 
It follows from \cite[Theorem 6.2]{M15crelle} that 
$\G_{A_i}$ is isomorphic to $\G_{B_i}|Y_i$. 
Hence $\G$ is isomorphic to 
\[
(\G_{B_1}|Y_1)\times(\G_{B_2}|Y_2)\times\dots\times(\G_{B_n}|Y_n)
\]
which is canonically identified with 
\[
(\G_{B_1}\times\G_{B_2}\times\dots\times\G_{B_n})
|(Y_1\times Y_2\times\dots\times Y_n)
=\H|(Y_1\times Y_2\times\dots\times Y_n). 
\]
Since 
\[
[1_{Y_1\times Y_2\times\dots\times Y_n}]
=\phi_1(u_{A_1})\otimes\phi_2(u_{A_2})\otimes\dots
\otimes\phi_n(u_{A_n})
=u_{B_1}\otimes u_{B_2}\otimes\dots\otimes u_{B_n}
\]
in $H_0(\H)$, 
Theorem \ref{pi} (3) tells us that 
$\H|(Y_1\times Y_2\times\dots\times Y_n)$ is isomorphic to $\H$. 
Therefore we get $\G\cong\H$. 

Let us prove the `only if' part. 
Suppose that $\pi:\G\to\H$ is an isomorphism. 
For 
\[
x=(x_1,x_2,\dots,x_m)\in X_{A_1}\times X_{A_2}\times\dots\times X_{A_m}
=\G^{(0)}, 
\]
the isotropy group $\G_x$ is isomorphic to $\Z^k$, 
where $k=\#\{i\mid\text{$x_i$ is eventually periodic}\}$. 
For any $x\in\G$, we have $\pi(\G_x)=\H_{\pi(x)}$, 
and so $m$ must be equal to $n$. 
Take a point $x=(x_1,x_2,\dots,x_n)\in\G^{(0)}$ such that $\G_x\cong\Z^n$. 
Let $y=(y_1,y_2,\dots,y_n)=\pi(x)$. 
We have $\H_y\cong\Z^n$. 
Let $g_i\in(\G_{A_i})_{x_i}$ be the unique attracting generator 
of $(\G_{A_i})_{x_i}\cong\Z$. 
Define $\tilde g_i\in\G_x$ by 
\[
\tilde g_i=(x_1,\dots,x_{i-1},g_i,x_{i+1},\dots,x_n). 
\]
Analogously we define $h_i\in(\G_{B_i})_{y_i}$ and $\tilde h_i\in\H_y$. 
Consider the subset 
\[
P=\{g\in\G_x\mid\text{$g$ is attracting}\}
\]
of $\G_x\cong\Z$. 
Evidently one has 
\[
\pi(P)=\{h\in\H_y\mid\text{$h$ is attracting}\}. 
\]
Clearly 
$P$ is generated by $\tilde g_1,\tilde g_2,\dots,\tilde g_n$ as a semigroup, 
that is, 
\[
P=\left\{\tilde g_1^{k_1}\tilde g_2^{k_2}\dots\tilde g_n^{k_n}
\mid k_1,k_2,\dots,k_n\in\N\cup\{0\}\right\}. 
\]
Moreover, such a set of generators is unique. 
The same is also true 
for $\{\tilde h_1,\tilde h_2,\dots,\tilde h_n\}\subset\pi(P)$. 
Hence we obtain 
$\{\pi(\tilde g_1),\pi(\tilde g_2),\dots,\pi(\tilde g_n)\}
=\{\tilde h_1,\tilde h_2,\dots,\tilde h_n\}$. 
To simplify notation, by permuting indices, 
we may assume $\pi(\tilde g_i)=\tilde h_i$ for every $i=1,2,\dots,n$. 

Since $h_i\in\G_{B_i}$ is attracting, 
we can find a compact open $\G_{B_i}$-set $V_i\subset\G_{B_i}$ 
such that $h_i$ is in $V_i$, $r(V_i)$ is a proper subset of $s(V_i)$ and 
\[
\bigcap_{k=1}^\infty r(V_i^k)=\{y_i\}. 
\]
Let $D=s(V_1)\times s(V_2)\times\dots\times s(V_n)$. 
For each $i=1,2,\dots,n$, 
we define a compact open $\H$-set $\widetilde V_i\subset\H$ by 
\[
\widetilde V_i=V_1\times\dots\times V_{i-1}\times s(V_i)
\times V_{i+1}\times\dots\times V_n. 
\]
One has 
\[
s(\widetilde V_i)=D,\quad 
\tilde h_1\dots\tilde h_{i-1}\tilde h_{i+1}\dots\tilde h_n\in\widetilde V_i
\]
and 
\[
\bigcap_{k=1}^\infty r(\widetilde V_i^k)=r(\widetilde V_i\cap\H')
=\{y_1\}\times\dots\times\{y_{i-1}\}\times s(V_i)
\times\{y_{i+1}\}\times\{y_n\}. 
\]
By Lemma \ref{Getale}, $r(\widetilde V_i\cap\H')$ is $\H$-\'etale. 
It is easy to see that 
the reduction $\H|r(\widetilde V_i\cap\H')$ is isomorphic to 
$\Z^{n-1}\times(\G_{B_i}|s(V_i))$, 
and its essentially principal part (see Section 2.1) is $\G_{B_i}|s(V_i)$. 

In the same way, 
since $g_i\in\G_{A_i}$ is attracting, 
we can find a compact open $\G_{A_i}$-set $U_i\subset\G_{A_i}$ 
such that $g_i$ is in $U_i$, $r(U_i)$ is a proper subset of $s(U_i)$ and 
\[
\bigcap_{k=1}^\infty r(U_i^k)=\{x_i\}. 
\]
Let $C=s(U_1)\times s(U_2)\times\dots\times s(U_n)$. 
For each $i=1,2,\dots,n$, 
we define a compact open $\G$-set $\widetilde U_i\subset\G$ by 
\[
\widetilde U_i=U_1\times\dots\times U_{i-1}\times s(U_i)
\times U_{i+1}\times\dots\times U_n. 
\]
One has 
\begin{equation}
s(\widetilde U_i)=C,\quad 
\tilde g_1\dots\tilde g_{i-1}\tilde g_{i+1}\dots\tilde g_n\in\widetilde U_i
\label{hare}
\end{equation}
and 
\begin{equation}
\bigcap_{k=1}^\infty r(\widetilde U_i^k)=r(\widetilde U_i\cap\G')
=\{x_1\}\times\dots\times\{x_{i-1}\}\times s(U_i)
\times\{x_{i+1}\}\times\{x_n\}. 
\label{dragon}
\end{equation}
By Lemma \ref{Getale}, $r(\widetilde U_i\cap\G')$ is $\G$-\'etale. 
It is easy to see that 
the reduction $\G|r(\widetilde U_i\cap\G')$ is isomorphic to 
$\Z^{n-1}\times(\G_{A_i}|s(U_i))$, 
and its essentially principal part is $\G_{A_i}|s(U_i)$. 

It follows from \eqref{hare} and \eqref{dragon} that 
$\pi(\widetilde U_i)$ is a compact open $\H$-set satisfying 
\[
s(\pi(\widetilde U_i))=\pi(C),\quad 
\tilde h_1\dots\tilde h_{i-1}\tilde h_{i+1}\dots\tilde h_n
\in\pi(\widetilde U_i)
\]
and 
\[
\bigcap_{k=1}^\infty r(\pi(\widetilde U_i)^k)
=r(\pi(\widetilde U_i)\cap\H'). 
\]
By replacing each $U_i$ with a smaller subset if necessary, 
we may assume $\pi(\widetilde U_i)\subset\widetilde V_i$. 
Furthermore, we may also assume $[1_{s(U_i)}]=u_{A_i}$ in $H_0(\G_{A_i})$. 
We can find a clopen subset $Y_i\subset s(V_i)$ 
such that 
\[
r(\pi(\widetilde U_i)\cap\H')
=\{y_1\}\times\dots\times\{y_{i-1}\}\times Y_i
\times\{y_{i+1}\}\times\{y_n\}. 
\]
The isomorphism $\pi:\G\to\H$ induces an isomorphism 
between $\G|r(\widetilde U_i\cap\G')$ 
and $\H|r(\pi(\widetilde U_i)\cap\H')$, 
and hence induces an isomorphism $\pi_i$ 
between their essentially principal parts 
$\G_{A_i}|s(U_i)$ and $\G_{B_i}|Y_i$. 
Therefore we obtain $\BF(A_i^t)\cong\BF(B_i^t)$ and 
$\det(\id-A_i)=\det(\id-B_i)$ by Corollary \ref{SFTMorita}. 

Let $\phi_i:\BF(A_i^t)\to\BF(B_i^t)$ be the isomorphism 
induced by the isomorphism $\pi_i:\G_{A_i}|s(U_i)\to\G_{B_i}|Y_i$. 
One has $\phi_i(u_{A_i})=\phi_i([1_{s(U_i)}])=[1_{Y_i}]$. 
We would like to show $\pi(C)=Y_1\times Y_2\times\dots\times Y_n$. 
For any $z=(z_1,z_2,\dots,z_n)\in C$, it is easy to see that 
\[
\lim_{k\to\infty}\theta(\widetilde U_i)^k(z)
=(x_1,\dots,x_{i-1},z_i,x_{i+1},\dots,x_n)
\]
holds. 
Similarly, for any $w=(w_1,w_2,\dots,w_n)\in D$, 
\[
\lim_{k\to\infty}\theta(\widetilde V_i)^k(w)
=(y_1,\dots,y_{i-1},w_i,y_{i+1},\dots,y_n)
\]
holds. 
We note that $\pi(\widetilde U_i)$ is contained in $\widetilde V_i$ and 
that $\pi(C)$ is contained in $D$. 
Hence, for $z=(z_1,z_2,\dots,z_n)\in C$, 
\begin{align*}
&\lim_{k\to\infty}\theta(\widetilde V_i)^k(\pi(z))\\
&=\lim_{k\to\infty}
\theta(\pi(\widetilde U_i))^k(\pi(z))\\
&=\pi\left(\lim_{k\to\infty}\theta(\widetilde U_i)^k(z)\right)\\
&=\pi(x_1,\dots,x_{i-1},z_i,x_{i+1},\dots,x_n)\\
&=(y_1,\dots,y_{i-1},\pi_i(z_i),y_{i+1},\dots,y_n), 
\end{align*}
which implies $\pi(z)=(\pi_1(z_1),\pi_2(z_2),\dots,\pi_n(z_n))$. 
Thus, $\pi(C)=Y_1\times Y_2\times\dots\times Y_n$. 
Then we get 
\begin{align*}
&\phi_1(u_{A_1})\otimes\phi_2(u_{A_2})\otimes\dots\otimes\phi_n(u_{A_n})\\
&=[1_{Y_1}]\otimes[1_{Y_2}]\otimes\dots\otimes[1_{Y_n}]\\
&=[1_{Y_1\times Y_2\times\dots\times Y_n}]\\
&=[1_{\pi(C)}]\\
&=H_0(\pi)([1_C])\\
&=H_0(\pi)([1_{s(U_1)}]\otimes[1_{s(U_2)}]\otimes\dots\otimes[1_{s(U_n)}])\\
&=H_0(\pi)(u_{A_1}\otimes u_{A_2}\otimes\dots\otimes u_{A_n})\\
&=H_0(\pi)([1_{\G^{(0)}}])\\
&=[1_{\H^{(0)}}]\\
&=u_{B_1}\otimes u_{B_2}\otimes\dots\otimes u_{B_n}, 
\end{align*}
which completes the proof. 
\end{proof}

\begin{remark}\label{highdimThomp}
Let $(\mathcal{V},\mathcal{E})$ be a directed graph 
consisting of a unique vertex and $k$ edges (i.e.\ loops), where $k\geq2$. 
The associated shift space is called the full shift over $k$ symbols. 
The adjacency matrix of $(\mathcal{V},\mathcal{E})$ is 
the $1\times 1$ matrix $[k]$. 
The groupoid $C^*$-algebra $C^*_r(\G_{[k]})$ of the SFT groupoid $\G_{[k]}$ 
is the Cuntz algebra $\mathcal{O}_k$. 
V.V. Nekrashevych proved that 
the Higman-Thompson group $V_k$ is identified with 
a certain subgroup of the unitary group $U(\mathcal{O}_k)$ 
(see \cite[Proposition 9.6]{Ne04JOP}). 
This identification yields an isomorphism 
between the Higman-Thompson group $V_{k,1}$ and 
the topological full group $[[\G_{[k]}]]$. 
Therefore, when $(X_A,\sigma_A)$ is a shift of finite type 
which is not necessarily a full shift, 
$[[\G_A]]$ can be thought of as a generalization of $V_{k,1}$ 
(see \cite[Remark 6.3]{M15crelle}). 

M. G. Brin introduced 
the notion of higher dimensional Thompson groups $nV_{k,r}$ 
in \cite[Section 4.2]{Brin04GeomD}. 
These groups can be considered as an $n$-dimensional analogue of 
the Higman-Thompson group $V_{k,r}=1V_{k,r}$. 
In \cite{Brin04GeomD, Brin05JAlg}, 
he proved that $V_{k,r}$ and $2V_{2,1}$ are not isomorphic, 
$2V_{2,1}$ is simple and $2V_{2,1}$ is finitely presented. 
He also proved that $nV_{2,1}$ is simple for all $n\in\N$ 
in \cite{Brin10PublMat}. 
In \cite{BL10GeomD}, C. Bleak and D. Lanoue showed that 
$nV_{2,1}\cong n'V_{2,1}$ if and only if $n=n'$. 
W. Dicks and C. Mart\'inez-P\'erez \cite{DMP14Israel} 
generalized this result and proved that 
$nV_{k,r}\cong n'V_{k',r'}$ if and only if 
$n=n'$, $k=k'$ and $\gcd(k{-}1,r)=\gcd(k'{-}1,r')$. 

Define an $r\times r$ matrix $A_{k,r}$ by 
\[
A_{k,r}=\begin{bmatrix}0&0&\ldots&0&k\\1&0&\ldots&0&0\\0&1&\ldots&0&0\\
\vdots&\vdots&\ddots&\vdots&\vdots\\0&0&\ldots&1&0\end{bmatrix}. 
\]
The topological full group $[[\G_{A_{k,r}}]]$ of the SFT groupoid $A_{k,r}$ 
is naturally isomorphic to the Higman-Thompson group $V_{k,r}$ 
(see \cite[Section 6.7.1]{M15crelle}). 
By Theorem \ref{SFTproperty} (2), 
$H_0(\G_{A_{k,r}})\cong\Z_{k{-}1}$, $H_n(\G_{A_{k,r}})=0$ for $n\geq1$ and 
the equivalence class of the characteristic function of $X_{A_{k,r}}$ 
corresponds to $\bar{r}\in\Z_{k{-}1}$. 
It is not so hard to see that 
the higher dimensional Thompson group $nV_{k,r}$ is isomorphic to 
the topological full group $[[\G]]$ of 
the product groupoid 
\[
\G=\G_{A_{k,r}}
\times\overbrace{\G_{A_{k,1}}\times\dots\times\G_{A_{k,1}}}^{n-1}. 
\]
It follows from Theorem \ref{pi} (1) that 
the commutator subgroup $D([[\G]])$ is simple. 
By Proposition \ref{prodSFTH_k}, 
we get $H_l(\G)\cong(\Z_{k{-}1})^{\binom{n{-}1}{l}}$. 
Therefore, Theorem \ref{prodSFTAH} tells us that 
$[[\G]]$ is simple if and only if $k=2$. 
This reproves the result of Brin \cite{Brin10PublMat}. 
Also, by applying Theorem \ref{prodSFTclassify}, 
we get a new proof of the classification theorem 
by Dicks and Mart\'inez-P\'erez \cite{DMP14Israel}. 

Finiteness conditions of (generalized) Higman-Thompson groups are studied 
by many authors. 
K. S. Brown \cite{Br87JPAA} proved that $V_{k,r}$ is of type F$_\infty$. 
By using his method, we proved that 
the topological full group $[[\G_A]]$ of any SFT groupoid $\G_A$ is 
of type F$_\infty$ (\cite[Theorem 6.21]{M15crelle}). 
As for the higher dimensional Thompson groups, 
D. H. Kochloukova, C. Mart\'inez-P\'erez and B. E. A. Nucinkis 
\cite{KMPN13ProcEdinb} 
showed that $2V_{2,1}$ and $3V_{2,1}$ are of type F$_\infty$. 
This was later generalized to all $nV_{2,1}$ 
by M. Fluch, M. Marschler, S. Witzel and M. C. B. Zaremsky \cite{FMWZ13PJM}. 
Recently, C. Mart\'inez-P\'erez, F. Matucci and B. E. A. Nucinkis 
\cite{MPMN13} 
proved that $nV_{k,1}$ (and many other relatives) are of type F$_\infty$. 
We do not know if the same holds for topological full groups 
of product groupoids of SFT groupoids 
$\G=\G_{A_1}\times\G_{A_2}\times\dots\times\G_{A_n}$. 
\end{remark}

\subsection{An embedding theorem}


Let $(X_A,\sigma_A)$ be a one-sided irreducible SFT 
associated with a finite directed graph $(\mathcal{V},\mathcal{E})$ 
($A$ is the adjacency matrix of the graph). 
For every edge $e\in\mathcal{E}$, 
we define the clopen set $C_e\subset X_A$ by 
\[
C_e=\{(x_k)_{k\in\N}\in X_A\mid x_1=e\}, 
\]
so that $\{C_e\mid e\in\mathcal{E}\}$ is a clopen partition of $X_A$. 
Define a compact open $\G_A$-set $U_e\subset\G_A$ by 
\[
U_e=\{(x,1,y)\in\G_A\mid x\in C_e,\ \sigma_A(x)=y\}. 
\]
One has 
\[
r(U_e)=C_e,\quad s(U_e)=\bigcup_{t(e)=i(f)}C_f. 
\]
Since 
\[
1_{U_e}1_{U_e}^*=1_{r(U_e)}\quad\text{and}\quad 
1_{U_e}^*1_{U_e}=1_{s(U_e)}
\]
in $C^*_r(\G_A)$, we get 
\[
\sum_{e\in\mathcal{E}}1_{U_e}1_{U_e}^*=1_{X_A}\quad\text{and}\quad 
1_{U_e}^*1_{U_e}=\sum_{t(e)=i(f)}1_{U_f}1_{U_f}^*, 
\]
which are called the Cuntz-Krieger relations. 
The Cuntz-Krieger algebra $C^*_r(\G_A)$ is characterized 
as the universal $C^*$-algebra 
generated by partial isometries $\{1_{U_e}\mid e\in\mathcal{E}\}$ 
subject to the Cuntz-Krieger relations (\cite{CK80Invent}). 
We remark that 
the Cuntz-Krieger algebra $C^*_r(\G_A)$ is simple and purely infinite. 

\begin{proposition}
Let $\G_A$ be an SFT groupoid and 
let $\H$ be a minimal, purely infinite \'etale groupoid. 
Suppose that $\phi:H_0(\G_A)\to H_0(\H)$ is a homomorphism 
satisfying $\phi([1_{\G_A^{(0)}}])=[1_{\H^{(0)}}]$. 
Then there exists a unital homomorphism 
$\pi:C^*_r(\G_A)\to C^*_r(\H)$ such that the following hold. 
\begin{enumerate}
\item $\pi(C(\G_A^{(0)}))\subset C(\H^{(0)})$. 
\item For any compact open $\G_A$-set $U$, 
there exists a compact open $\H$-set $V$ 
such that $\pi(1_U)=1_V$. 
\item For any clopen set $C\subset\G_A^{(0)}$, 
$[\pi(1_C)]=\phi([1_C])$ in $H_0(\H)$. 
\end{enumerate}
In particular, 
$\pi$ induces an embedding of $[[\G_A]]$ into $[[\H]]$. 
\end{proposition}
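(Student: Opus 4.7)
The plan is to exploit the universal property of the Cuntz–Krieger algebra: the homomorphism $\pi$ will be determined by specifying its values on the generating partial isometries $\{1_{U_e}\}_{e \in \mathcal{E}}$, and we will arrange so that these values are characteristic functions $1_{V_e}$ of suitably chosen compact open $\H$-sets $V_e$.

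The first step is to build a clopen partition $\{D_e\}_{e \in \mathcal{E}}$ of $\H^{(0)}$ such that $[1_{D_e}] = \phi([1_{C_e}])$ in $H_0(\H)$. Since $\H$ is minimal and purely infinite, Lemma \ref{H_0ofpi} lets us realize any $H_0$-class as the class of a characteristic function of a clopen subset of any prescribed non-empty clopen set. We construct the $D_e$'s one by one: having placed disjoint $D_{e_1}, \dots, D_{e_{j-1}}$ inside $\H^{(0)}$, the complement $A = \H^{(0)} \setminus (D_{e_1} \cup \dots \cup D_{e_{j-1}})$ is still non-empty (for $j < m$), because by splitting $A$ if necessary we can always arrange that the newly chosen $D_{e_j}$ is a proper subset of the remaining room. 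The class of the last piece $D_{e_m} = \H^{(0)} \setminus (D_{e_1} \cup \dots \cup D_{e_{m-1}})$ then automatically equals $\phi([1_{C_{e_m}}])$, because $\phi([1_{\G_A^{(0)}}]) = [1_{\H^{(0)}}]$ and $\phi$ is additive. Next, for each $e$, the identity $[1_{r(U_e)}]=[1_{s(U_e)}]$ in $H_0(\G_A)$ together with $s(U_e)=\bigsqcup_{t(e)=i(f)}C_f$ gives $[1_{D_e}] = [1_{\bigsqcup_{t(e)=i(f)} D_f}]$ in $H_0(\H)$, so cancellation (Theorem \ref{pi} (3)) produces a compact open $\H$-set $V_e$ with $r(V_e)=D_e$ and $s(V_e)=\bigsqcup_{t(e)=i(f)} D_f$. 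The two Cuntz–Krieger relations for $\{1_{V_e}\}$ follow immediately, so by the universal property of $C^*_r(\G_A)$ there is a unital $*$-homomorphism $\pi:C^*_r(\G_A)\to C^*_r(\H)$ with $\pi(1_{U_e})=1_{V_e}$, and $\pi$ is injective because $C^*_r(\G_A)$ is simple.

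Conditions (1)–(3) will then be verified as follows. The algebra $C(X_A)$ is generated by the cylinder projections $1_{U_{e_1}}\cdots 1_{U_{e_k}}1_{U_{e_k}}^*\cdots 1_{U_{e_1}}^*$, whose images under $\pi$ are $1_{r(V_{e_1}\cdots V_{e_k})}\in C(\H^{(0)})$, giving (1). For (2), every compact open $\G_A$-set $U$ decomposes as a finite disjoint union of basic sets $U_{\alpha,\beta}=U_{e_1}\cdots U_{e_k}U_{f_l}^{-1}\cdots U_{f_1}^{-1}$ with $|\alpha|=k$, $|\beta|=l$ fixed, whose images are $V_{\alpha,\beta}:=V_{e_1}\cdots V_{e_k}V_{f_l}^{-1}\cdots V_{f_1}^{-1}$, compact open $\H$-sets. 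The disjointness of distinct $V_{\alpha,\beta}$ of the same bidegree follows inductively from the partition property of $\{D_e\}$: the ranges $r(V_{e_1}\cdots V_{e_k})$ for distinct admissible words of length $k$ are pairwise disjoint, because at the first position where two words differ, the relevant subsets of the $\H$-set $V_{e_1}$ (or some $V_{e_j}$) have disjoint sources inside distinct $D_e$'s, hence, by injectivity of $r$ on that $\H$-set, disjoint ranges. This gives $\pi(1_U) = 1_V$ for $V$ the disjoint union of the $V_{\alpha_i,\beta_i}$. Property (3) is then linear in $C$: for a cylinder $C_\alpha$ one has $[\pi(1_{C_\alpha})]=[1_{r(V_\alpha)}]=\sum_{f}[1_{D_f}]=\phi([1_{C_\alpha}])$, with the sum running over admissible extensions, and the general case follows by writing an arbitrary clopen set as a finite disjoint union of cylinders.

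Finally, for the embedding $[[\G_A]]\hookrightarrow[[\H]]$: each $\alpha = \theta(U) \in [[\G_A]]$ has a unique associated $U$, and $\pi(1_U)=1_V$ with $V$ unique (by injectivity of $\pi$) defines $\alpha \mapsto \theta(V)$. Multiplicativity follows from $1_{U_1}1_{U_2}=1_{U_1U_2}$ and multiplicativity of $\pi$; injectivity uses that if $\theta(V)=\id$ then $V \subset \H^{(0)}$, so $1_V$ is a projection; injectivity of $\pi$ then forces $1_U$ to be a projection, hence $U\subset\G_A^{(0)}$ and $\alpha=\id$. The main technical obstacle is the verification of condition (2), specifically the pairwise disjointness of the candidate $\H$-sets $V_{\alpha,\beta}$ coming from a decomposition of $U$; handling this requires the careful use of the partition property established in step one together with the injectivity of the range and source maps on each $V_e$.
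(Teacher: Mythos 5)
Your proposal is correct and follows essentially the same route as the paper: realize the classes $\phi([1_{C_e}])$ as a clopen partition $\{D_e\}$ of $\H^{(0)}$, use cancellation (Theorem \ref{pi} (3)) to produce the bisections $V_e$ with $r(V_e)=D_e$ and $s(V_e)=\bigcup_{t(e)=i(f)}D_f$, check the Cuntz--Krieger relations, and invoke universality; the paper obtains the partition directly from \cite[Lemma 5.3]{M15crelle} where you build it inductively from Lemma \ref{H_0ofpi}, and it dismisses your ``main technical obstacle'' in (2) as easy. The one inaccuracy is that a general compact open $\G_A$-set need not decompose into pieces of a single bidegree $(k,l)$; however, since $r$ and $s$ are injective on $U$, the words $\alpha_i$ (and likewise the $\beta_i$) appearing in any disjoint decomposition are pairwise prefix-incomparable, and your first-difference argument then yields disjointness of the corresponding $\H$-sets without any assumption on the lengths.
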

\begin{proof}
Let $(\mathcal{V},\mathcal{E})$ be a finite directed graph 
whose adjacency matrix is $A$. 
For $e\in\mathcal{E}$, 
let $C_e\subset X_A$ and $U_e\subset\G_A$ as above. 
By Theorem \ref{SFTproperty} (2), 
the elements $[1_{C_e}]$ generate $H_0(\G_A)$. 
We have 
\[
\sum_{e\in\mathcal{E}}\phi([1_{C_e}])=\phi([1_{X_A}])
=[1_{\H^{(0)}}]. 
\]
It follows from \cite[Lemma 5.3]{M15crelle} that 
there exists a clopen partition 
$\{D_e\mid e\in\mathcal{E}\}$ of $\H^{(0)}$ 
such that $[1_{D_e}]=\phi([1_{C_e}])$ in $H_0(\H)$. 
Since 
\[
[1_{D_e}]=\phi([1_{C_e}])=\sum_{t(e)=i(f)}\phi([1_{C_f}])
=\sum_{t(e)=i(f)}[1_{D_f}], 
\]
by means of Theorem \ref{pi} (3), 
we can find a compact open $\H$-set $V_e\subset\H$ satisfying 
\[
r(V_e)=D_e,\quad s(V_e)=\bigcup_{t(e)=i(f)}D_f. 
\]
Hence we obtain the Cuntz-Krieger relations 
\[
\sum_{e\in\mathcal{E}}1_{V_e}1_{V_e}^*=1_{\H^{(0)}}\quad\text{and}\quad 
1_{V_e}^*1_{V_e}=\sum_{t(e)=i(f)}1_{V_f}1_{V_f}^*. 
\]
Therefore 
there exists a unital homomorphism $\pi:C^*_r(\G_A)\to C^*_r(\H)$ 
such that $\pi(1_{U_e})=1_{V_e}$. 

The Cartan subalgebra $C(\G_A^{(0)})=C(X_A)$ is generated 
by projections of the form 
\[
(1_{U_{e_1}}1_{U_{e_2}}\dots1_{U_{e_k}})
(1_{U_{e_1}}1_{U_{e_2}}\dots1_{U_{e_k}})^*, 
\]
which is mapped by $\pi$ to 
\[
(1_{V_{e_1}}1_{V_{e_2}}\dots1_{V_{e_k}})
(1_{V_{e_1}}1_{V_{e_2}}\dots1_{V_{e_k}})^*\in C(\H^{(0)}). 
\]
This proves (1). 

Any compact open $\G_A$-set can be written as a finite disjoint union 
of compact open $\G_A$-sets of the form 
\[
U=(U_{e_1}U_{e_2}\dots U_{e_k})(U_{f_1}U_{f_2}\dots U_{f_l})^{-1}. 
\]
For such $U\subset\G_A$, put 
\[
V=(V_{e_1}V_{e_2}\dots V_{e_k})(V_{f_1}V_{f_2}\dots V_{f_l})^{-1}. 
\]
It is easy to see that the homomorphism $\pi$ sends $1_U$ to $1_V$. 
This proves (2). 

(3) is clear from the construction. 
\end{proof}

\begin{theorem}\label{embedding}
Let $\G=\G_{A_1}\times\G_{A_2}\times\dots\times\G_{A_n}$ be 
a product groupoid of SFT groupoids. 
Let $\H_1,\H_2,\dots,\H_n$ be minimal, purely infinite \'etale groupoids 
and let $\H=\H_1\times\H_2\times\dots\times\H_n$. 
For each $i=1,2,\dots,n$, 
suppose that $\phi_i:H_0(\G_{A_i})\to H_0(\H_i)$ is a homomorphism 
satisfying $\phi_i([1_{\G_{A_i}^{(0)}}])=[1_{\H_i^{(0)}}]$. 
Then there exists a unital homomorphism 
$\pi:C^*_r(\G)\to C^*_r(\H)$ such that the following hold. 
\begin{enumerate}
\item $\pi(C(\G^{(0)}))\subset C(\H^{(0)})$. 
\item For any compact open $\G$-set $U$, 
there exists a compact open $\H$-set $V$ 
such that $\pi(1_U)=1_V$. 
\item For any clopen set $C\subset\G^{(0)}$, 
$[\pi(1_C)]=\phi([1_C])$ in $H_0(\H)$, 
where $\phi=\phi_1\otimes\phi_2\otimes\dots\otimes\phi_n$. 
\end{enumerate}
In particular, 
$\pi$ induces an embedding of $[[\G]]$ into $[[\H]]$. 
\end{theorem}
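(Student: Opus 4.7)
The plan is to reduce to the single-factor case of the preceding proposition by means of a tensor-product construction, exploiting that each $C^*_r(\G_{A_i})$ is a Cuntz--Krieger algebra and hence simple and nuclear.

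First, I would apply the preceding proposition to each triple $(\G_{A_i}, \H_i, \phi_i)$ to obtain unital $*$-homomorphisms $\pi_i \colon C^*_r(\G_{A_i}) \to C^*_r(\H_i)$ satisfying the one-factor analogues of (1)--(3). Simplicity of $C^*_r(\G_{A_i})$ automatically forces each $\pi_i$ to be injective. Next I would invoke the standard identification $C^*_r(\G_1 \times \G_2) \cong C^*_r(\G_1) \otimes C^*_r(\G_2)$ of reduced groupoid $C^*$-algebras of a product, applied iteratively, to factor
\[
C^*_r(\G) \cong C^*_r(\G_{A_1}) \otimes C^*_r(\G_{A_2}) \otimes \cdots \otimes C^*_r(\G_{A_n}),
\]
and similarly to embed $C^*_r(\H_1) \otimes \cdots \otimes C^*_r(\H_n) \subseteq C^*_r(\H)$ as a unital $C^*$-subalgebra. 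Under these identifications $C(\G^{(0)})$ becomes the tensor product of the factor Cartans and each rectangular characteristic function $1_{U_1 \times \cdots \times U_n}$ becomes the elementary tensor $1_{U_1} \otimes \cdots \otimes 1_{U_n}$. Nuclearity of each $C^*_r(\G_{A_i})$ allows me to assemble the $\pi_i$ into a unital $*$-homomorphism
\[
\pi = \pi_1 \otimes \pi_2 \otimes \cdots \otimes \pi_n \colon C^*_r(\G) \longrightarrow C^*_r(\H),
\]
which is automatically injective because tensor products of injective maps between nuclear $C^*$-algebras remain injective.

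To verify the three listed properties I would argue factor by factor. Property (1) is immediate from the tensor factorization of $C(\G^{(0)})$ and the corresponding property of each $\pi_i$. For (2), every compact open $\G$-set is a finite disjoint union of rectangles $U_1 \times \cdots \times U_n$, and $\pi$ sends such a rectangle to $V_1 \times \cdots \times V_n$, where $V_i$ is the compact open $\H_i$-set produced by property (2) applied to $\pi_i$; disjoint unions are handled by orthogonality of the corresponding partial isometries. For (3), Corollary \ref{H1ofprod} identifies $H_0(\G) \cong H_0(\G_{A_1}) \otimes \cdots \otimes H_0(\G_{A_n})$, and similarly for $\H$, so the claim on a rectangle reduces to property (3) for each $\pi_i$, and additivity of $[\,\cdot\,]$ in $H_0$ covers disjoint unions. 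The final assertion about embedding topological full groups then follows at once: given $\alpha = \theta(U) \in [[\G]]$, the compact open $\H$-set $V$ produced by (2) satisfies $s(V) = r(V) = \H^{(0)}$, because $1_V = \pi(1_U)$ is a unitary in $C^*_r(\H)$, so $\theta(V) \in [[\H]]$; injectivity of $\pi$ yields injectivity of the induced map.

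The main obstacle is establishing the clean tensor factorization of the reduced groupoid $C^*$-algebras and checking its compatibility with the Cartan subalgebras and with characteristic functions of rectangular open $\G$-sets; once this structural bookkeeping is in place, the assembly of $\pi$ and the verification of (1)--(3) are routine.
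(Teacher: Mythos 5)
Your proposal is correct and follows essentially the same route as the paper: the paper likewise applies the preceding proposition to each factor to obtain $\pi_i\colon C^*_r(\G_{A_i})\to C^*_r(\H_i)$ and then sets $\pi=\pi_1\otimes\pi_2\otimes\dots\otimes\pi_n$ under the identification of the reduced $C^*$-algebra of a product groupoid with the tensor product. Your additional remarks on injectivity via simplicity and the rectangle decomposition of compact open $\G$-sets are sound and merely make explicit what the paper leaves to the reader.
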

\begin{proof}
By the proposition above, 
we can construct unital homomorphisms $\pi_i:C^*_r(\G_{A_i})\to C^*_r(\H_i)$ 
satisfying the properties described there. 
Then the unital homomorphism $\pi=\pi_1\otimes\pi_2\otimes\dots\otimes\pi_n$ 
from $C^*_r(\G)$ to $C^*_r(\H)$ meets the requirement. 
\end{proof}

\begin{corollary}
Let $\G=\G_{A_1}\times\G_{A_2}\times\dots\times\G_{A_n}$ be 
a product groupoid of SFT groupoids. 
Then the topological full group $[[\G]]$ is embeddable into 
the higher dimensional Thompson group $nV_{2,1}$. 
\end{corollary}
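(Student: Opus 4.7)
The plan is to obtain the embedding as a direct application of Theorem \ref{embedding}, with the right-hand groupoid chosen to be a product of copies of the SFT groupoid of the full $2$-shift. Concretely, for each $i=1,2,\dots,n$ I set $\H_i=\G_{[2]}$, where $[2]$ denotes the $1\times1$ matrix with single entry $2$. Then $\H=\H_1\times\H_2\times\dots\times\H_n=\G_{[2]}^n$, and by Remark \ref{highdimThomp} (applied with $k=2$, $r=1$, so that $A_{k,r}=A_{k,1}=[2]$) the topological full group $[[\H]]$ is exactly the higher dimensional Thompson group $nV_{2,1}$.

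To invoke Theorem \ref{embedding}, I need to verify its hypotheses. First, each $\H_i$ is minimal and purely infinite by Theorem \ref{SFTproperty}(1). Second, I must supply homomorphisms $\phi_i:H_0(\G_{A_i})\to H_0(\H_i)$ satisfying $\phi_i([1_{\G_{A_i}^{(0)}}])=[1_{\H_i^{(0)}}]$. The key observation is that, by Theorem \ref{SFTproperty}(2),
\[
H_0(\G_{[2]})\cong\BF([2]^t)=\Z/(1-2)\Z=0,
\]
so every target group vanishes. There is therefore a unique candidate, namely $\phi_i=0$, and the required matching of unit classes is automatic since both $\phi_i([1_{\G_{A_i}^{(0)}}])$ and $[1_{\H_i^{(0)}}]$ are zero.

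With all hypotheses verified, Theorem \ref{embedding} immediately yields a unital $*$-homomorphism $\pi:C^*_r(\G)\to C^*_r(\H)$ inducing an injective group homomorphism $[[\G]]\hookrightarrow[[\H]]=nV_{2,1}$, which is precisely the stated conclusion. There is essentially no obstacle: the entire argument reduces to the numerical fact $\BF([2]^t)=0$, which trivializes the $H_0$-level compatibility condition in Theorem \ref{embedding} and lets the embedding be produced without any further bookkeeping.
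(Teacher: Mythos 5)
Your proof is correct and follows exactly the paper's route: take $\H$ to be the $n$-fold product of $\G_{[2]}$, identify $[[\H]]$ with $nV_{2,1}$ via Remark \ref{highdimThomp}, and apply Theorem \ref{embedding}. Your extra observation that $H_0(\G_{[2]})\cong\BF([2]^t)=0$ makes the existence of the required $\phi_i$ automatic is a point the paper leaves implicit, but it is the same argument.
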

\begin{proof}
Let $\H$ be the $n$-fold product of $\G_{[2]}$. 
As discussed in Remark \ref{highdimThomp}, 
$[[\H]]$ is canonically isomorphic to $nV_{2,1}$. 
It follows from the theorem above that 
$[[\G]]$ is embeddable into $nV_{2,1}$. 
\end{proof}

\subsection{Abelianization}

In this subsection, 
we would like to determine the abelianization $[[\G]]_\ab$ 
of the topological full group $[[\G]]$ 
for product groupoids $\G=\G_{A_1}\times\G_{A_2}\times\dots\times\G_{A_n}$ 
of SFT groupoids. 
By Theorem \ref{prodSFTAH}, we have already known that 
$\G$ satisfies the AH conjecture, i.e. 
\[
\begin{CD}
H_0(\G)\otimes\Z_2@>j>>[[\G]]_\ab@>I>>H_1(\G)@>>>0
\end{CD}
\]
is exact. 
We will show that $\G$ has the strong AH property when $n=2$, 
and that $\G$ may fail to have the strong AH property when $n\geq3$. 

Let $n\geq2$ be a natural number and 
let $k:\{1,2,\dots,n\}\to\N\setminus\{1\}$ be a map. 
First, we would like to begin with the groupoid 
$\G=\G_{[k(1)]}\times\G_{[k(2)]}\times\dots\times\G_{[k(n)]}$ 
($[k(d)]$ is a $1\times1$ matrix). 
By Theorem \ref{SFTproperty} (2) and Proposition \ref{prodSFTH_k}, we have 
\[
H_0(\G)\cong\Z_g\quad\text{and}\quad H_1(\G)\cong(\Z_g)^{n-1}, 
\]
where $g=\gcd(k(1){-}1,k(2){-}1,\dots,k(n){-}1)$. 
In later discussion, we actually need to work with the following three cases. 
\begin{description}
\item[Case 0]$k$ is constant and $k(1)=k(2)=\dots=k(n)=l$ 
for $l\in\N\setminus\{1\}$. 
\item[Case 1]$k(1)=3$ and $k(2)=\dots=k(n)=5$. 
\item[Case 2]$k(1)=k(2)=3$ and $k(3)=\dots=k(n)=5$. 
\end{description}
So, the reader may restrict the attention to these special cases, 
but we proceed with the discussion for arbitrary $k$ for the moment. 
As mentioned in Remark \ref{highdimThomp}, 
the topological full group of $\G_{[2]}\times\G_{[2]}$ is 
the two dimensional Thompson group $2V_{2,1}$. 
Presentations of the group $2V_{2,1}$ was given 
by M. G. Brin \cite{Brin05JAlg}, 
and later it was extended to $nV_{2,1}$ 
by J. Hennig and F. Matucci \cite{HM12PJM}. 
We have to generalize some arguments of these works 
to $\G=\G_{[k(1)]}\times\G_{[k(2)]}\times\dots\times\G_{[k(n)]}$. 

Fix $n\geq2$ and $k:\{1,2,\dots,n\}\to\N\setminus\{1\}$. 
Let $X_{[k(d)]}=\{0,1,\dots,k(d){-}1\}^\N$ for $d=1,2,\dots,n$. 
With the product topology, $X_{[k(d)]}$ is the Cantor set. 
The shift map $\sigma_{[k(d)]}:X_{[k(d)]}\to X_{[k(d)]}$ is given by 
\[
\sigma_{[k(d)]}(x)_m=x_{m+1}\quad m\in\N,\ x=(x_m)_m\in X_{[k(d)]}. 
\]
The dynamical system $(X_{[k(d)]},\sigma_{[k(d)]})$ is 
the full shift over $k(d)$ symbols. 
Let $Z_{n,k}$ be the product space of $X_{[k(d)]}$, i.e. 
\[
Z_{n,k}=X_{[k(1)]}\times X_{[k(2)]}\times\dots\times X_{[k(n)]}. 
\]
For $d\in\{1,2,\dots,n\}$, 
we define $\sigma_d:Z_{n,k}\to Z_{n,k}$ by 
\[
\sigma_d=\id\times\dots\times
\id\times\sigma_{[k(d)]}\times\id\times\dots\times\id,
\]
where $\sigma_{[k(d)]}$ acts on the $d$-th coordinate. 
For all $i\in\N$ and $d\in\{1,2,\dots,n\}$, 
we define homeomorphisms 
$s_{i,d},\tau_i\in\Homeo(Z_{n,k}\times\N)$ by 
\[
s_{i,d}(z,j)
=\begin{cases}(z,j)&j<i\\
\left(\sigma_d(z),j+z^{(d)}_1\right)&j=i\\
(z,j{+}k(d))&j>i, \end{cases}
\]
and 
\[
\tau_i(z,j)=\begin{cases}(z,j)&j\neq i,i{+}1\\
(z,i{+}1)&j=i\\(z,i)&j=i{+}1, \end{cases}
\]
for $z=(z^{(1)},z^{(2)},\dots,z^{(n)})\in Z_{n,k}$ and $j\in\N$, 
where $z^{(d)}_1$ denotes the first coordinate of $z^{(d)}\in X_{[k(d)]}$.
Set $\mathcal{S}_{n,k}=\{s_{i,d},\tau_i\mid i\in\N,\ d=1,2,\dots,n\}$. 
Let $W_{n,k}\subset\Homeo(Z_{n,k}\times\N)$ denote 
the subgroup generated by $\mathcal{S}_{n,k}$. 
It is easy to see that 
these generators satisfy the following set $\mathcal{R}_{n,k}$ of relations. 
\[
\mathcal{R}_{n,k}\ \begin{cases}
s_{i,d}s_{j,d'}=s_{j+k(d)-1,d'}s_{i,d}&\quad i<j,\ 1\leq d,d'\leq n \\
\tau_i^2=1&\quad i\in\N, \\
\tau_i\tau_j=\tau_j\tau_i&\quad \lvert i-j\rvert\geq2, \\
\tau_i\tau_{i+1}\tau_i=\tau_{i+1}\tau_i\tau_{i+1}&\quad i\in\N, \\
s_{i,d}\tau_j=\tau_{j+k(d)-1}s_{i,d}&\quad i<j, \\
s_{i,d}\tau_i=\tilde\tau_{i,d}s_{i+1,d},&\quad i\in\N, \\
s_{i,d}\tau_j=\tau_js_{i,d}&\quad i>j+1, \\
s_{i,d'}s_{i+1,d'}\dots s_{i+k(d)-1,d'}s_{i,d}& \\
\quad=\alpha_{i,d,d'}s_{i,d}s_{i+1,d}\dots s_{i+k(d')-1,d}s_{i,d'}&\quad 
i\in\N,\ d\neq d', 
\end{cases}
\]
where $\tilde\tau_{i,d}$ is $\tau_{i+k(d)-1}\dots\tau_{i+1}\tau_i$ and 
$\alpha_{i,d,d'}$ is any word in $\{\tau_i\mid i\in\N\}$ satisfying 
\[
\alpha_{i,d,d'}(z,j)=\begin{cases}(z,i{+}qk(d'){+}p)&j=i{+}pk(d){+}q\quad
\text{for some }0{\leq}p{<}k(d'),\ 0{\leq}q{<}k(d)\\
(z,j)&\text{else. }\end{cases}
\]
Clearly one has 
\begin{itemize}
\item the permutation on $\N$ induced by $\tilde\tau_{i,d}$ is odd 
if and only if $k(d)$ is odd, 
\item when $k(d)=k(d')=l$, 
the permutation on $\N$ induced by $\alpha_{i,d,d'}$ is odd 
if and only if $l\in4\Z+2$ or $l\in4\Z+3$, 
\item when $\{k(d),k(d')\}=\{3,5\}$, 
the permutation on $\N$ induced by $\alpha_{i,d,d'}$ is even. 
\end{itemize}

\begin{proposition}
[{\cite[Theorem 2]{Brin05JAlg}},{\cite[Theorem 1.1]{HM12PJM}}]
The group $W_{n,k}$ is presented 
by using the generators $\mathcal{S}_{n,k}$ and 
the relations $\mathcal{R}_{n,k}$. 
\end{proposition}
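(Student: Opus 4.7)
The plan is to follow the standard Thompson-group strategy of establishing a normal form. Let $\widetilde{W}_{n,k}$ be the abstract group presented by the generators $\mathcal{S}_{n,k}$ and the relations $\mathcal{R}_{n,k}$. First, a direct computation on $Z_{n,k}\times\N$ verifies that each relation in $\mathcal{R}_{n,k}$ holds among the $s_{i,d}$ and $\tau_i$ viewed as homeomorphisms; the cross-direction relation reflects the fact that expanding once in direction $d$ and then once in direction $d'$, versus the reverse order, produces the same refinement of a cell into $k(d)k(d')$ pieces up to the relabelling permutation $\alpha_{i,d,d'}$. This gives a surjective homomorphism $\pi\colon\widetilde{W}_{n,k}\to W_{n,k}$, and the task is to prove $\pi$ is injective.

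For Step~2 I would set up a normal form on $\widetilde{W}_{n,k}$. Using the relations $s_{i,d}\tau_j=\tau_{j+k(d)-1}s_{i,d}$ for $i<j$, $s_{i,d}\tau_j=\tau_j s_{i,d}$ for $i>j{+}1$, and $s_{i,d}\tau_i=\tilde\tau_{i,d}s_{i+1,d}$, every word in the generators can be rewritten as $P\sigma Q^{-1}$ with $P,Q$ positive words in the $s_{i,d}$'s and $\sigma$ a word in the $\tau_j$'s alone. Next, using the commutation $s_{i,d}s_{j,d'}=s_{j+k(d)-1,d'}s_{i,d}$ for $i<j$ together with the cross-direction expansion relation (modulo absorbing the resulting $\alpha_{i,d,d'}$ into $\sigma$ via the $s\tau$-relations), each positive word $P,Q$ may be reduced to a canonical increasing form encoding a ``multidimensional $k$-pattern'' --- a finite partition of $Z_{n,k}\times\N$ into cylinder sets obtained by iterated expansions.

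For Step~3 one checks uniqueness: a triple $(P,\sigma,Q)$ in normal form represents the element of $W_{n,k}$ acting on the two matched partitions via the bijection prescribed by $\sigma$, so $\pi(P\sigma Q^{-1})=\id$ forces the two patterns to coincide and $\sigma$ to act as the identity permutation on the labels. The remaining relations $\tau_i^2=1$, $\tau_i\tau_j=\tau_j\tau_i$ for $\lvert i{-}j\rvert\ge2$, and $\tau_i\tau_{i+1}\tau_i=\tau_{i+1}\tau_i\tau_{i+1}$ form a Coxeter presentation of the symmetric group on the common label set, so the only such $\sigma$ is trivial in $\widetilde{W}_{n,k}$; the common positive word $P=Q$ then cancels and we conclude that $\pi$ is injective.

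The main obstacle is Step~2: showing that the rewriting system determined by $\mathcal{R}_{n,k}$ is confluent enough to bring every positive word into the canonical increasing form with controlled $\alpha_{i,d,d'}$-corrections. In Brin's original treatment for $k\equiv2$ and in Hennig--Matucci's extension to $nV_{2,1}$ this is done by induction on total degree of expansion combined with a careful analysis of overlapping critical pairs among the $s$-relations; here the varying $k(d)$ only lengthens the bookkeeping, since the cross-direction relation now involves a string of $k(d)$ expansions in direction $d'$ rather than $2$. Once Step~2 is verified, the geometric description of normal forms as pattern pairs makes Step~3 essentially automatic.
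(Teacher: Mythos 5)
Your proposal is correct and follows essentially the same route as the paper: the paper's entire proof is the single remark that the argument of Brin and Hennig--Matucci goes through verbatim with $k(d)$-ary trees in place of binary trees, and your normal-form outline ($P\sigma Q^{-1}$ decomposition, reduction of positive words to canonical pattern pairs, uniqueness via the geometric action) is exactly the content of that argument, with the varying $k(d)$ affecting only the bookkeeping as you note.
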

\begin{proof}
We can prove this proposition exactly in the same way 
as \cite{Brin05JAlg} and \cite{HM12PJM}, 
using $k(d)$-ary trees instead of binary trees. 
\end{proof}

In the same way as $nV_{2,1}$, 
the topological full group $[[\G]]$ of 
\[
\G=\G_{[A_{k(1),r}]}\times\G_{[k(2)]}\times\dots\times\G_{[k(n)]}
\]
is canonically isomorphic to the subgroup of $W_{n,k}$ 
consisting of elements $\gamma$ 
satisfying $\gamma(z,j)=(z,j)$ unless $j\in\{1,2,\dots,r\}$ 
(see Remark \ref{highdimThomp} for the definition of the matrix $A_{l,r}$). 
Especially, in Case 0, 
the subgroup of $W_{n,k}$ consisting of elements $\gamma$ 
satisfying $\gamma(z,j)=(z,j)$ unless $j\in\{1,2,\dots,r\}$ is 
canonically isomorphic to the higher-dimensional Thompson group $nV_{l,r}$. 
In what follows, 
we identify the topological full group $[[\G]]$ 
with this subgroup of $W_{n,k}$. 
It is easy to see that 
$s_{1,d'}^{-1}s_{1,d}$ and $s_{1,d}^{-1}\tau_1s_{1,d}$ are in $[[\G]]$. 
The element $s_{1,d}^{-1}\tau_1s_{1,d}$ is the transposition 
corresponding to the generator of $H_0(\G_{[k(d)]})\cong\Z_{k(d)-1}$. 
For $d\neq d'$, 
the element $s_{1,d'}^{-1}s_{1,d}\in[[\G]]$ is called the baker's map. 
When $n=2$ and $k(1)=k(2)=l$, 
as observed in \cite[Lemma 8.1]{Brin04GeomD}, 
the homeomorphism $s_{1,2}^{-1}s_{1,1}$ on $2V_{l,1}$ is 
conjugate to the two-sided full shift over $l$ symbols. 

\begin{remark}
The groupoid $C^*$-algebra $C^*_r(\G_{[l]})$ of $\G_{[l]}$ is 
the Cuntz algebra, 
which is generated by $l$ isometries $t_1,t_2,\dots,t_l$ satisfying 
\[
\sum_{i=1}^lt_it_i^*=1. 
\]
It is well-known that the unitary 
\[
u=\sum_{i=1}^lt_i^*\otimes t_i\in C^*_r(\G_{[l]})\otimes C^*_r(\G_{[l]})
\]
is a generator of $K_1(C^*_r(\G_{[l]})\otimes C^*_r(\G_{[l]}))\cong\Z_{l-1}$. 
The automorphism of $C(X_{[l]}\times X_{[l]})$ induced by $u$ 
corresponds to the baker's map $s_{1,2}^{-1}s_{1,1}\in 2V_{l,1}$. 
\end{remark}

\begin{lemma}\label{tauisnonzero}
Let $\G=\G_{[A_{k(1),r}]}\times\G_{[k(2)]}\times\dots\times\G_{[k(n)]}$. 
\begin{enumerate}
\item In Case 0, suppose that $k(1)=\dots=k(n)=l$ is in $4\Z+1$. 
Then, there exists a homomorphism $\rho:[[\G]]\to\Z_2$ 
such that $\rho(s_{1,d'}^{-1}s_{1,d})=\bar{0}$ and 
$\rho(s_{1,d'}^{-1}\tau_1s_{1,d})=\bar{1}$ for any $d,d'$. 
\item In Case 1, 
there exists a homomorphism $\rho:[[\G]]\to\Z_2$ 
such that $\rho(s_{1,d'}^{-1}s_{1,d})=\bar{0}$ and 
$\rho(s_{1,d'}^{-1}\tau_1s_{1,d})=\bar{1}$ for any $d,d'$. 
\item In Case 2, 
there exists a homomorphism $\rho:[[\G]]\to\Z_4$ 
such that $\rho(s_{1,d}^{-1}s_{1,1})=\bar{1}$ for any $d\neq1$ and 
$\rho(s_{1,d}^{-1}\tau_1s_{1,d})=\overline{2}$ for any $d$. 
\end{enumerate}
\end{lemma}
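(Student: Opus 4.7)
The plan is to define each $\rho$ as a homomorphism on the ambient group $W_{n,k}$ via its presentation by $\mathcal{S}_{n,k}$ and $\mathcal{R}_{n,k}$ recorded in the proposition immediately above the lemma, and then to restrict to the subgroup $[[\G]]\subset W_{n,k}$. Concretely, I assign values in $\Z_2$ or $\Z_4$ to the generators in $\mathcal{S}_{n,k}$ and verify that every relation in $\mathcal{R}_{n,k}$ is respected. Since $s_{1,d'}^{-1}s_{1,d}$, $s_{1,d'}^{-1}\tau_1 s_{1,d}$ and $s_{1,d}^{-1}s_{1,1}$ are words in $\mathcal{S}_{n,k}$ lying in $[[\G]]$, the values of $\rho$ on them are immediate from the assignment.

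For (1) and (2), set $\rho(s_{i,d})=\bar 0$ and $\rho(\tau_i)=\bar 1$ in $\Z_2$, so that $\rho$ is simply the mod-$2$ count of $\tau$'s in a word. The $\tau$-only relations (including $\tau_i^2=1$ and the braid relation) and the pure commutation relations trivially preserve this count. The only delicate points are the relation $s_{i,d}\tau_i=\tilde\tau_{i,d}s_{i+1,d}$, which forces $\tilde\tau_{i,d}$ to be an odd permutation of $\N$ (equivalently, $k(d)$ to be odd), and the large $s$-commutation relation, which forces $\alpha_{i,d,d'}$ to be an even permutation. By the parity list quoted just above the lemma, case (1) satisfies both conditions since $l\in 4\Z+1$ is odd and avoids $4\Z+2\cup 4\Z+3$. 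In case (2) every $k(d)\in\{3,5\}$ is odd, and for $d\neq d'$ the pair $\{k(d),k(d')\}$ is either $\{3,5\}$ or $\{5,5\}$ with $5\in 4\Z+1$ (the pair $\{3,3\}$ does not arise, since $d=1$ is the unique index with $k(d)=3$), so $\alpha_{i,d,d'}$ is always even.

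For (3), set $\rho(s_{i,1})=\bar 1$, $\rho(s_{i,d})=\bar 0$ for $d\neq 1$, and $\rho(\tau_i)=\bar 2$ in $\Z_4$. The $\tau$-only relations hold using $\bar 2+\bar 2=\bar 0$, and the pure $s$-commutation relations with matching indices hold by symmetry. The relation $s_{i,d}\tau_i=\tilde\tau_{i,d}s_{i+1,d}$ becomes $\bar 2=k(d)\cdot\bar 2$, valid because every $k(d)\in\{3,5\}$ is odd. The main obstacle, and the only genuinely case-sensitive point, is the large relation in $\mathcal{R}_{n,k}$; applying $\rho$ to both sides reduces it to the $\Z_4$-identity
\[
(k(d)-1)\,\rho(s_{\cdot,d'})-(k(d')-1)\,\rho(s_{\cdot,d})=\rho(\alpha_{i,d,d'}),
\]
where $\rho(\alpha_{i,d,d'})$ is $\bar 2$ if $\alpha_{i,d,d'}$ is an odd permutation of $\N$ and $\bar 0$ if it is even. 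A brief inspection of the unordered pair $\{d,d'\}$ settles it: for $\{d,d'\}=\{1,2\}$ one has $k(d)=k(d')=3\in 4\Z+3$, so $\alpha$ is odd and both sides equal $\pm\bar 2=\bar 2$; for any other pair one has $\{k(d),k(d')\}\in\{\{3,5\},\{5,5\}\}$ with $5\in 4\Z+1$, so $\alpha$ is even and both sides vanish in $\Z_4$. This produces a well-defined homomorphism on $W_{n,k}$; restricting to $[[\G]]$ yields the stated $\rho$ in each of the three cases.
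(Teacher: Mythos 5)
Your proposal is correct and follows the same route as the paper: the paper's proof simply defines $\rho$ on the generators $\mathcal{S}_{n,k}$ by exactly the assignments you give ($\rho(s_{i,d})=\bar0$, $\rho(\tau_i)=\bar1$ in cases (1)--(2); $\rho(s_{i,1})=\bar1$, $\rho(s_{i,d})=\bar0$ for $d\neq1$, $\rho(\tau_i)=\bar2$ in case (3)) and invokes the presentation of $W_{n,k}$, leaving the relation checks implicit. Your explicit verification of the relations in $\mathcal{R}_{n,k}$, using the parity facts about $\tilde\tau_{i,d}$ and $\alpha_{i,d,d'}$ stated just before the lemma, is exactly the intended argument, only written out in full.
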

\begin{proof}
(1)(2)
By the proposition above, 
we can find a homomorphism $\rho:W_{n,k}\to\Z_2$ 
such that $\rho(s_{i,d})=\bar{0}$ and $\rho(\tau_i)=\bar{1}$. 

(3)
By the proposition above, 
we can find a homomorphism $\rho:W_{n,k}\to\Z_4$ 
such that $\rho(s_{i,1})=\bar{1}$, $\rho(s_{i,d})=\bar{0}$ for any $d\neq1$ 
and $\rho(\tau_i)=\overline{2}$. 
\end{proof}

Let $\G$ be the product groupoid of $n$ copies of $\G_{[l]}$. 
Then, $[[\G]]$ is isomorphic to $nV_{l,1}$. 
When $l$ is even, $H_0(\G)\otimes\Z_2\cong\Z_{l-1}\otimes\Z_2$ is trivial, 
and so $[[\G]]_\ab$ is isomorphic to $H_1(\G)\cong(\Z_{l-1})^{n-1}$ 
via the index map. 
When $\l$ is in $4\Z+1$, by Lemma \ref{tauisnonzero} (1), 
the homomorphism $j:H_0(\G)\otimes\Z_2\to[[\G]]_\ab$ is injective. 
Thus, $\G$ has the strong AH property. 
Moreover, Lemma \ref{tauisnonzero} (1) tells us that the extension is trivial. 
Hence we obtain $[[\G]]_\ab\cong(\Z_{l-1})^{n-1}\oplus\Z_2$. 
When $n=2$ and $l\in4\Z+3$, in the same way as Lemma \ref{tauisnonzero} (3), 
one can show that there exists a surjective homomorphism $[[\G]]\to\Z_{2l-2}$. 
Therefore, $\G$ has the strong AH property, 
and the extension does not split. 
When $n\geq3$ and $l\in4\Z+3$, by virtue of Lemma \ref{tauiszero} below, 
we can conclude that 
the homomorphism $j:H_0(\G)\otimes\Z_2\to[[\G]]_\ab$ vanishes. 
In other words, $\G$ does not have the strong AH property and 
$[[\G]]_\ab\cong(\Z_{l-1})^{n-1}$. 
These arguments are summarized as follows. 
This can be regarded as a special case of Theorem \ref{prodSFTAbel}. 

\begin{theorem}\label{highdimThompAbel}
For $n\geq1$, $k\geq2$ and $r\geq1$, the following hold. 
\begin{enumerate}
\item When $n=1$, 
\[
(1V_{k,r})_\ab\cong\begin{cases}\Z_2&k\in2\Z\\0&k\in2\Z+1. \end{cases}
\]
\item When $n=2$, 
\[
(2V_{k,r})_\ab\cong\begin{cases}\Z_{k-1}&k\in2\Z\\
\Z_{k-1}\oplus\Z_2&k\in4\Z+1\\
\Z_{2k-2}&k\in4\Z+3. \end{cases}
\]
\item When $n\geq3$, 
\[
(nV_{k,r})_\ab\cong\begin{cases}(\Z_{k-1})^{n-1}&k\in2\Z\text{ or }k\in4\Z+3\\
(\Z_{k-1})^{n-1}\oplus\Z_2&k\in4\Z+1. \end{cases}
\]
\end{enumerate}
\end{theorem}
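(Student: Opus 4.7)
I would set $\G = \G_{A_{k,r}}\times\G_{[k]}^{n-1}$, so that $[[\G]]\cong nV_{k,r}$ via Remark \ref{highdimThomp}. The AH conjecture holds for $\G$ by Theorem \ref{prodSFTAH}, and combining Theorem \ref{SFTproperty}(2) with Proposition \ref{prodSFTH_k} gives $H_0(\G)\cong\Z_{k-1}$ and $H_1(\G)\cong(\Z_{k-1})^{n-1}$. The AH exact sequence therefore reads
\[
\Z_{k-1}\otimes\Z_2 \xrightarrow{\,j\,} [[\G]]_\ab \xrightarrow{\,I\,} (\Z_{k-1})^{n-1} \longrightarrow 0,
\]
so the entire calculation amounts to determining the image of $j$ (either $0$ or $\Z_2$, since $\Z_{k-1}\otimes\Z_2$ is $0$ when $k$ is even and $\Z_2$ when $k$ is odd) and, in the latter case, whether the resulting extension splits.

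When $k$ is even the left term vanishes, so $I$ is an isomorphism and $[[\G]]_\ab\cong(\Z_{k-1})^{n-1}$, which disposes of the cases $k\in 2\Z$ in all three parts simultaneously. For $n=1$ the conclusion follows directly from the strong AH property of SFT groupoids (Theorem \ref{SFTproperty}(3)), which yields $[[\G_{A_{k,r}}]]_\ab\cong H_0(\G_{A_{k,r}})\otimes\Z_2$. For $n\geq 2$ with $k$ odd I would first reduce to the case $r=1$: Corollary \ref{SFTMorita} shows that $\G_{A_{k,r}}$ and $\G_{[k]}$ are Morita equivalent (both have Bowen--Franks group $\Z_{k-1}$ with matching determinant signs), so $\G$ and $\G_{[k]}^n$ share the same homology and the same AH sequence; an application of Theorem \ref{embedding} with the natural $H_0$-isomorphisms produces a homomorphism $nV_{k,1}\to nV_{k,r}$ under which the images of $j$ correspond, reducing the remaining analysis to $\G=\G_{[k]}^n$.

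For that special case one invokes the presentation of the ambient group $W_{n,k}$ discussed just above the theorem. Lemma \ref{tauisnonzero}(1) provides a homomorphism $[[\G]]\to\Z_2$ nontrivial on the distinguished transposition whenever $k\in 4\Z+1$, forcing $j$ injective with a splitting and giving $(\Z_{k-1})^{n-1}\oplus\Z_2$. When $k\in 4\Z+3$ and $n=2$, the analogue of Lemma \ref{tauisnonzero}(3), using that $\alpha_{i,1,2}$ is now an odd permutation, produces a homomorphism $[[\G]]\to\Z_{2k-2}$ realizing the unique nonsplit extension $\Z_{2k-2}$. Finally, when $k\in 4\Z+3$ and $n\geq 3$, one would prove a companion Lemma \ref{tauiszero} asserting that the distinguished transposition becomes a product of commutators once three factors of $\G_{[k]}$ are available, so that $j=0$ and $[[\G]]_\ab\cong(\Z_{k-1})^{n-1}$. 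This last step is the main obstacle: one has to extract from the relations $\mathcal{R}_{n,k}$ a concrete identity in which the odd-parity maps $\alpha_{i,d,d'}$ combine across three factors into an even permutation that rewrites a transposition as a commutator. A secondary difficulty is the Morita reduction to $r=1$, since $[[\G]]_\ab$ is not a Morita invariant in general (see Section 5.5); one must verify that the specific embedding of Theorem \ref{embedding} is compatible with the homomorphism $j$.
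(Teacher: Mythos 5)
Your overall skeleton --- realizing $nV_{k,r}$ as $[[\G]]$ for $\G=\G_{A_{k,r}}\times\G_{[k]}^{n-1}$, computing $H_0(\G)\cong\Z_{k-1}$ and $H_1(\G)\cong(\Z_{k-1})^{n-1}$, and reading everything off the AH sequence by deciding whether $j$ vanishes and whether the extension splits --- is exactly the paper's, and the cases $k\in2\Z$, $n=1$, $k\in4\Z+1$, and $(n,k)=(2,4\Z+3)$ are handled as the paper does. However, your reduction to $r=1$ is a genuine gap, and it points the embedding in the wrong direction. Theorem \ref{embedding} with $\phi:\Z_{k-1}\to\Z_{k-1}$, $\bar1\mapsto\bar r$ (multiplication by $r$, which is an isomorphism only when $\gcd(r,k-1)=1$) gives a homomorphism $nV_{k,1}\to nV_{k,r}$; a map \emph{into} $nV_{k,r}$ cannot show that $j$ is nonzero on $(nV_{k,r})_\ab$, and the map in the useful direction $nV_{k,r}\to nV_{k,1}$ requires $\bar r\mapsto\bar1$, which does not exist unless $r$ is invertible mod $k-1$ (e.g.\ it fails for $2V_{5,2}$). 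Moreover the image of the distinguished transposition under your map is $r$ times the transposition class, which dies whenever $r$ is even. The reduction is also unnecessary: the paper avoids it by identifying $[[\G]]$ for arbitrary $r$ with the subgroup of $W_{n,k}$ supported on $\{1,\dots,r\}$ and restricting the homomorphisms $\rho$ of Lemma \ref{tauisnonzero}, which are defined on all of $W_{n,k}$ via the presentation $\mathcal{R}_{n,k}$; that is why Lemma \ref{tauisnonzero} is already stated with the factor $\G_{A_{k(1),r}}$.

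The second gap is the one you flag yourself as ``the main obstacle'': the vanishing of $j$ when $k\in4\Z+3$ and $n\geq3$. Your proposed attack --- extracting an identity from the relations $\mathcal{R}_{n,k}$ in which the odd permutations $\alpha_{i,d,d'}$ combine into an even one --- is not carried out and is not how the paper proceeds. The paper's Lemma \ref{tauiszero} is a short dynamical argument: with $\beta_{12}$, $\beta_{23}$ the baker's maps acting on the first--second and second--third coordinates of $X_{[k]}^3$, Lemma \ref{baker}~(4) gives $(k-1)[\beta_{12}]=(k-1)[\beta_{23}]=t$ in $[[\G]]_\ab$, while $\beta_{12}\beta_{23}$ \emph{is} the baker's map on the first and third coordinates, so $(k-1)[\beta_{12}\beta_{23}]=t$ as well; hence $2t=t$ and $t=0$. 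Without this idea (or a completed combinatorial substitute) the case $n\geq3$, $k\in4\Z+3$ remains unproved, so the proposal as it stands does not establish the theorem.
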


We turn, now, to the consideration of the general case. 

In order to state Lemma \ref{baker}, we introduce some notations. 
Let $\G$ and $\H$ be minimal purely infinite groupoids. 
For $\gamma\in[[\G]]$ and a clopen set $F\subset\H^{(0)}$, 
we define $\gamma\odot\id_F\in[[\G\times\H]]$ by 
\[
(\gamma\odot\id_F)(x,y)=\begin{cases}(\gamma(x),y)&y\in F\\
(x,y)&y\notin F. \end{cases}
\]
It is easy to see that $I(\gamma\odot\id_F)=I(\gamma)\otimes[1_F]$, 
where $H_1(\G)\otimes H_0(\H)$ is regarded 
as a subgroup of $H_1(\G\times\H)$. 

Let $g\in\N$. 
Let $a\in H_0(\G)$ and $b\in H_0(\H)$ be such that $ga=0$ and $gb=0$. 
Choose nonempty clopen sets $E\subset\G^{(0)}$ and $F\subset\H^{(0)}$ so that 
$E\neq\G^{(0)}$, $F\neq\H^{(0)}$, $[1_E]=a$ and $[1_F]=b$. 
There exist compact open $G$-sets $U_0,U_1,\dots,U_g$ such that 
$s(U_i)=E$ for every $i=0,1,\dots,g$ and 
$\{r(U_i)\mid i=0,1,\dots,g\}$ is a partition of $E$. 
In the same way, we can find compact open $\H$-sets $V_0,V_1,\dots,V_g$ 
such that $s(V_i)=F$ for every $i=0,1,\dots,g$ and 
$\{r(V_i)\mid i=0,1,\dots,g\}$ is a partition of $F$. 
Then, we define a homeomorphism $\beta\in[[\G\times\H]]$ 
associated with $g,a,b$ by 
\[
\beta(x,y)=\begin{cases}
\theta(U_i\times V_i^{-1})(x,y)&x\in E,\ y\in r(V_i)\\
(x,y)&\text{otherwise. }\end{cases}
\]
This may be thought of 
as a variant of the baker's map $s_{1,2}^{-1}s_{1,1}\in2V_{l,1}$. 

\begin{lemma}\label{baker}
In the setting above, one has the following. 
\begin{enumerate}
\item If two proper nonempty clopen sets $F_1,F_2\subset\H^{(0)}$ satisfy 
$[1_{F_1}]=[1_{F_2}]$ in $H_0(\H)$, then 
$\gamma\odot\id_{F_1}$ and $\gamma\odot\id_{F_2}$ are conjugate 
in $[[\G\times\H]]$. 
\item If $F\subset\H^{(0)}$ is a nonempty clopen set 
with $[1_F]=0$ in $H_0(\H)$, then $\gamma\odot\id_F$ belongs to 
the commutator subgroup $D([[\G\times\H]])$. 
\item $\beta$ is uniquely determined by $g$, $a$ and $b$ 
up to conjugacy in $[[\G\times\H]]$. 
\item The equivalence class of $\beta^g$ in $[[\G\times\H]]_\ab$ is 
equal to the image of 
\[
\frac{g(g+1)}{2}a\otimes b\otimes\bar{1}
\in H_0(\G)\otimes H_0(\H)\otimes\Z_2. 
\]
\end{enumerate}
\end{lemma}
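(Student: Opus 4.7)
\emph{Proof plan.} For Part (1), use cancellation in $\H$ (Theorem \ref{pi}(3)) applied to the equation $[1_{F_1}]=[1_{F_2}]$ and to the equal-classed complements $\H^{(0)}\setminus F_i$: combining the resulting two compact open $\H$-sets into one whose source and range are both $\H^{(0)}$ produces $\alpha\in[[\H]]$ with $\alpha(F_1)=F_2$. Then $\id_{\G^{(0)}}\times\alpha\in[[\G\times\H]]$ conjugates $\gamma\odot\id_{F_1}$ to $\gamma\odot\id_{F_2}$ by direct inspection. The properness hypothesis is what guarantees that $\H^{(0)}\setminus F_i$ is nonempty so the second application of cancellation is available.

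For Part (2), first treat $F=\H^{(0)}$: by proper infiniteness of $\H^{(0)}$ in the purely infinite groupoid $\H$, split $\H^{(0)}=H_1\sqcup H_2$ into proper nonempty clopen subsets with $[1_{H_i}]=0$, and reduce to the proper case via $\gamma\odot\id_{\H^{(0)}}=(\gamma\odot\id_{H_1})(\gamma\odot\id_{H_2})$. Otherwise, choose any nonempty clopen $G\subset\H^{(0)}\setminus F$. Since $[1_F]=0$, we have $[1_{F\cup G}]=[1_G]$, so the Part~(1) construction yields $\alpha\in[[\H]]$ with $\alpha(F\cup G)=G$. Then $(\id\times\alpha)(\gamma\odot\id_{F\cup G})(\id\times\alpha)^{-1}=\gamma\odot\id_G$ implies $[\gamma\odot\id_{F\cup G}]=[\gamma\odot\id_G]$ in $[[\G\times\H]]_\ab$; combining with the disjoint-support factorization $\gamma\odot\id_{F\cup G}=(\gamma\odot\id_F)(\gamma\odot\id_G)$ forces $[\gamma\odot\id_F]=0$, i.e.\ $\gamma\odot\id_F\in D([[\G\times\H]])$.

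For Part (3), given two realizations $\beta,\beta'$ coming from data $(E,F,\{U_i\},\{V_i\})$ and $(E',F',\{U'_i\},\{V'_i\})$, build a conjugating element in $[[\G\times\H]]$ in stages. First translate $E\to E'$ and $F\to F'$ using Part~(1)-style constructions (from $[1_E]=[1_{E'}]=a$ and $[1_F]=[1_{F'}]=b$); this reduces to the case $E=E',\ F=F'$. Then permute the partitions $\{r(U_i)\}$ into $\{r(U'_i)\}$ and $\{r(V_i)\}$ into $\{r(V'_i)\}$ via further cancellations in $\G$ and $\H$ (each piece of each partition has class $a$ or $b$). The composite element of $[[\G\times\H]]$ conjugates $\beta$ to $\beta'$.

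For Part (4), analyze $\beta$ via a digit-shift description: the partitions $E=\bigsqcup E_i$ and $F=\bigsqcup F_j$ give base-$(g+1)$ expansions of points of $E$ and $F$, and $\beta$ transfers the leading digit of $y$ to the front of $x$'s expansion while deleting it from $y$'s expansion. Iterating, $\beta^g$ performs $g$ such transfers; since the Tor part of $H_1(\G\times\H)$ in which $I(\beta)=a\otimes b$ lives is $g$-torsion, we get $I(\beta^g)=0$, so $\beta^g\in\Ker I$ and its class in $[[\G\times\H]]_\ab$ lies in $\Ima j$. Decompose $\beta^g$ into a product of transpositions using property TR (Proposition~\ref{GhasTR}), and compute the sum of their classes in $H_0(\G\times\H)\otimes\Z_2$; Parts~(1) and (2) collapse many factors either to mutual conjugates (which cancel in $[[\G\times\H]]_\ab$ after pairing) or to elements of $D([[\G\times\H]])$. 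What remains is a combinatorial count of transpositions, each of class $a\otimes b\otimes\bar 1$, totalling $1+2+\cdots+g=\frac{g(g+1)}{2}$. The main obstacle is this explicit bookkeeping in Part~(4): tracking precisely which transpositions in the $(g+1)\times(g+1)$-grid iteration of the baker's map survive modulo $D([[\G\times\H]])$ to yield the triangular-number formula, which requires pinning down the iterated digit-shift combinatorics.
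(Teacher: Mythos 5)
Parts (1)--(3) of your plan match the paper's proof: (1) is exactly the argument via cancellation (Theorem \ref{pi}(3)) and conjugation by $\id\times\alpha$, your (2) is a harmless variant of the paper's (the paper splits $F$ itself into two zero-class pieces $F_1\sqcup F_2$ and uses $[\gamma\odot\id_F]=2[\gamma\odot\id_F]$, while you adjoin an external piece $G$; both work), and (3) is the ``same as (1)'' reduction the paper intends.

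Part (4) is where the real content of the lemma lies, and there your proposal has a genuine gap. You correctly reduce to showing that $[\beta^g]$ equals a specific element of $\Ima j$, but your route --- decompose $\beta^g$ into transpositions via property TR and ``count'' --- does not close: first, property TR is not available here, since Lemma \ref{baker} is stated for arbitrary minimal purely infinite $\G$ and $\H$, and Proposition \ref{GhasTR} only yields TR under extra hypotheses (it is established for products of SFT groupoids later, not for general purely infinite groupoids); second, even granting some decomposition of $\beta^g$ into transpositions, mere existence gives no handle on the sum of the classes $[1_{s(U)}]\otimes\bar{1}$ of the pieces --- you need a concrete decomposition, and you explicitly defer exactly that bookkeeping as ``the main obstacle.'' The paper's proof supplies the missing structural identity: it defines one explicit transposition $\tau$ on the grid $E\times F=\bigsqcup_{i,j}r(U_i)\times r(V_j)$, namely the ``transpose'' swapping $r(U_i)\times r(V_j)$ with $r(U_j)\times r(V_i)$ via $\theta(U_j\times V_i)\circ\theta(U_i\times V_j)^{-1}$ for $i\neq j$, and observes that $\tau\beta$ is a product of $g{+}1$ elements with mutually disjoint supports, each conjugate to $\beta$. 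Hence $[\tau\beta]=(g{+}1)[\beta]$ in $[[\G\times\H]]_\ab$, so $[\beta^g]=[\tau]$, and $\tau$ is visibly the product of the $\binom{g+1}{2}$ transpositions with moved sets $r(U_i)\times r(V_j)$ ($i<j$), each of class $a\otimes b$, giving $\sum_{i<j}[1_{r(U_i)\times r(V_j)}]=\tfrac{g(g+1)}{2}\,a\otimes b$ and the stated formula. Without this identity (or an equivalent explicit decomposition), the triangular-number count in your sketch is asserted rather than proved.
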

\begin{proof}
(1)
By Theorem \ref{pi} (3), 
there exists $\alpha\in[[\H]]$ such that $\alpha(F_1)=F_2$. 
Then $(\id\times\alpha)(\gamma\odot\id_{F_1})(\id\times\alpha^{-1})$ 
equals $\gamma\odot\id_{F_2}$. 

(2)
Suppose $F\neq\H^{(0)}$. 
We can find disjoint nonempty clopen sets $F_1,F_2$ 
such that $F_1\cup F_2=F$ and $[1_{F_1}]=[1_{F_2}]=0$. 
It follows from (1) that 
$\gamma\odot\id_F$ and $\gamma\odot\id_{F_i}$ are conjugate 
for each $i=1,2$. 
Since $(\gamma\odot\id_{F_1})(\gamma\odot\id_{F_2})=\gamma\odot\id_F$, 
$\gamma\odot\id_F$ is in $D([[\G\times\H]])$, as desired. 
When $F=\H^{(0)}$, 
we can apply the same argument for $F_1,F_2$ as above. 

(3)
This can be proved in the same way as (1). 

(4)
Define a transposition $\tau\in[[\G\times\H]]$ by 
\[
\tau(x,y)=\begin{cases}
\left(\theta(U_j\times V_i)\circ\theta(U_i\times V_j)^{-1}\right)(x,y)
&x\in r(U_i),\ y\in r(V_j),\ i\neq j\\
(x,y)&\text{otherwise. }\end{cases}
\]
It is not so hard to see that 
$\tau\beta$ is a product of $g{+}1$ elements with mutually disjoint support 
and each of them is conjugate to $\beta$. 
This means $\tau\beta^{-g}$ is in $D([[\G\times\H]])$. 
We have 
\[
\sum_{i<j}[1_{r(U_i)}\times1_{r(V_j)}]=\frac{g(g+1)}{2}a\otimes b
\]
in $H_0(\G\times\H)$, which implies the conclusion. 
\end{proof}

\begin{lemma}\label{tauiszero}
Let $k$ be a natural number such that $k\in4\Z{+}3$ and 
let $\G=\G_{[k]}\times\G_{[k]}\times\G_{[k]}$. 
The index map $I:[[\G]]\to H_1(\G)$ induces 
an isomorphism $[[\G]]_\ab\cong H_1(\G)$. 
In particular, $\G$ does not have the strong AH property. 
\end{lemma}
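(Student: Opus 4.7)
The plan is to exhibit three variants of the baker's map inside $\G=\G_{[k]}^{3}$ that satisfy a transparent multiplicative relation, and to feed that relation into Lemma~\ref{baker}(4) in order to show that the generator of $\Ima j\subset[[\G]]_\ab$ is trivial.

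For $1\le i\ne j\le 3$, define $\beta_{ij}\in[[\G]]$ to act on the $i$-th and $j$-th factors as the natural two-sided shift transporting the first symbol of $x_j$ to the front of $x_i$, and to fix the remaining factor; concretely,
\[
\beta_{12}(x_1,x_2,x_3)=\bigl(((x_2)_1,x_1),\,\sigma x_2,\,x_3\bigr),
\]
and analogously for $\beta_{13}$ and $\beta_{23}$. A direct calculation then gives the key identity
\[
\beta_{13}=\beta_{12}\beta_{23},
\]
which is transparent symbolically: prepending the first letter of $x_3$ onto $x_1$ is the same as first moving it into $x_2$ and then out of $x_2$ into $x_1$.

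For each pair $(i,j)$, decomposing $\G$ as $\G_{[k]}\times\G_{[k]}^{2}$ by isolating the $i$-th factor exhibits $\beta_{ij}$ as a baker's map in the sense of Lemma~\ref{baker}, with $g=k-1$ and with $a=b=\bar{1}$, the generator of $\Z_{k-1}$ in the relevant $H_0$. The $U_\ell,V_\ell$ come from the natural cylinder partitions of each SFT factor, and all their ranges have class $\bar 1$, so that
\[
\sum_{\ell<m}[1_{r(U_\ell)\times r(V_m)}]=\binom{g+1}{2}\,\bar 1\otimes\bar 1=\tfrac{g(g+1)}{2}\,a\otimes b.
\]
Granting this (the modest point being that the computation in the proof of Lemma~\ref{baker}(4) is insensitive to whether $E=\G^{(0)}$ or $F=\H^{(0)}$ and uses only the partition data), one obtains
\[
[\beta_{ij}^{g}]=j\!\left(\tfrac{g(g+1)}{2}\,\bar 1\right)\in[[\G]]_\ab.
\]
For $k\in 4\Z+3$ one has $g=4m+2$ and $g+1=4m+3$, so $\tfrac{g(g+1)}{2}=(2m+1)(4m+3)$ is odd; hence $[\beta_{ij}^{g}]=j(\bar 1)$, the generator of $\Ima j$, since $H_0(\G)\otimes\Z_2\cong\Z_2$.

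Raising the identity $\beta_{13}=\beta_{12}\beta_{23}$ to the $g$-th power and passing to the abelian quotient $[[\G]]_\ab$ yields
\[
j(\bar 1)=[\beta_{13}^{g}]=g\bigl([\beta_{12}]+[\beta_{23}]\bigr)=[\beta_{12}^{g}]+[\beta_{23}^{g}]=2\,j(\bar 1)=0,
\]
the last equality because $j(\bar 1)$ is the image of a $2$-torsion class and so has order dividing $2$. Thus $j$ vanishes on the generator of $H_0(\G)\otimes\Z_2$, so $j\equiv 0$; combined with Theorem~\ref{prodSFTAH}, this forces $I\colon[[\G]]_\ab\to H_1(\G)$ to be an isomorphism. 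The ``in particular'' clause is immediate: $H_0(\G)\otimes\Z_2\cong\Z_2\neq 0$ while $j=0$ is not injective, so $\G$ cannot have the strong AH property. The only delicate point in the argument will be the careful verification that the full-support $\beta_{ij}$ really satisfy the conclusion of Lemma~\ref{baker}(4); all the remaining ingredients---the composition identity, the Lemma~\ref{baker}(4) formula with the chosen partition, and the parity computation---are then routine.
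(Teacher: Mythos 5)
Your proposal is correct and takes essentially the same route as the paper's own proof: the same baker's maps $\beta_{12},\beta_{23}$ with the composition identity $\beta_{12}\beta_{23}=\beta_{13}$, the same appeal to Lemma \ref{baker}(4) with $g=k-1$ and the parity observation that $g(g+1)/2$ is odd when $k\in4\Z+3$, giving $t=2t=0$ in $[[\G]]_\ab$. The full-support caveat you flag about Lemma \ref{baker}(4) is likewise present (tacitly) in the paper's own application of that lemma.
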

\begin{proof}
By Theorem \ref{prodSFTAH}, 
\[
\begin{CD}
H_0(\G)\otimes\Z_2@>j>>[[\G]]_\ab@>I>>H_1(\G)@>>>0
\end{CD}
\]
is exact. 
Since $k$ is odd, $H_0(\G)\otimes\Z_2\cong\Z_{k-1}\otimes\Z_2=\Z_2$. 
Let $X_{[k]}=\{0,1,\dots,k{-}1\}^\N$ and 
let $(X_{[k]},\sigma_{[k]})$ be the full shift over $k$ symbols. 
As in Section 5.4, 
we define the clopen set $C_i\subset X_{[k]}$ by 
\[
C_i=\{(x_n)_{n\in\N}\in X_{[k]}\mid x_1=i\}. 
\]
Define a compact open $\G_{[k]}$-set $U_i\subset\G_{[k]}$ by 
\[
U_i=\{(x,1,y)\in\G_{[k]}\mid x\in C_i,\ \sigma_{[k]}(x)=y\}. 
\]
Let $t\in[[\G]]_\ab$ be the image of 
the generator of $H_0(\G)\otimes\Z_2\cong\Z_2$. 
We would like to show $t=0$. 
Define $\beta_{12}\in[[\G]]$ by 
\[
\beta_{12}(x,y,z)=
\theta(U_i\times U_i^{-1}\times X_{[k]})(x,y,z)\
\quad\text{when $y\in C_i$}
\]
for $(x,y,z)\in\G^{(0)}=X_{[k]}\times X_{[k]}\times X_{[k]}$. 
The homeomorphism $\beta_{12}$ is the baker's map 
acting on the first and second coordinates of 
$X_{[k]}\times X_{[k]}\times X_{[k]}$. 
By Lemma \ref{baker} (4), 
one sees $(k{-}1)[\beta_{12}]=t$ in $[[\G]]_\ab$. 
We can define $\beta_{23}\in[[\G]]$ in the same way by 
\[
\beta_{23}(x,y,z)=
\theta(X_{[k]}\times U_i\times U_i^{-1})(x,y,z)\
\quad\text{when $z\in C_i$}. 
\]
Again one has $(k{-}1)[\beta_{23}]=t$. 
It is easy to see that 
$\beta_{12}\beta_{23}$ is equal to the baker's map 
acting on the first and third coordinates of 
$X_{[k]}\times X_{[k]}\times X_{[k]}$. 
Therefore, we get $(k{-}1)[\beta_{12}\beta_{23}]=t$. 
Consequently, we obtain $2t=t$, thus $t=0$. 
\end{proof}

Let $\G=\G_{A_1}\times\G_{A_2}\times\dots\times\G_{A_n}$ be 
a product groupoid of SFT groupoids. 
We compute the abelianization $[[\G]]_\ab$. 
By Corollary \ref{H1ofprod}, $H_1(\G)$ is isomorphic to 
\begin{align*}
&\bigoplus_{q_1+q_2+\dots+q_n=1}
H_{q_1}(\G_{A_1})\otimes H_{q_2}(\G_{A_2})\otimes\dots
\otimes H_{q_n}(\G_{A_n})\\
&\quad\oplus\bigoplus_{p=1}^{n-1}
H_0(\G_{A_1})\otimes\dots\otimes H_0(\G_{A_{p-1}})
\otimes\Tor(H_0(\G_{A_p}),H_0(\G_{A_{p+1}}\times\dots\times \G_{A_n})). 
\end{align*}
We write 
\[
T_p=H_0(\G_{A_1})\otimes\dots\otimes H_0(\G_{A_{p-1}})
\otimes\Tor(H_0(\G_{A_p}),H_0(\G_{A_{p+1}}\times\dots\times \G_{A_n})). 
\]
For each $d=1,2,\dots,n$, we assume 
\[
H_0(\G_{A_d})\cong\bigoplus_{i=1}^{h(d)}\Z_{m(d,i)}
\]
for $h(d)\in\N\cup\{0\}$ and $m(d,i)\in\{0,2,3,\dots\}$, 
where $\Z_0$ is understood as $\Z$. 
Set 
\[
J=\{(i_1,i_2,\dots,i_n)\in\N^n\mid1\leq i_d\leq h(d)\}. 
\]
The set $J$ is empty if and only if 
there exists $d$ such that $H_0(\G_{A_d})=0$. 
In such a case one has $H_0(\G)=H_1(\G)=0$ and $[[\G]]=D([[\G]])$. 
For $(i_1,i_2,\dots,i_n)\in J$ and $p\in\{1,2,\dots,n{-}1\}$, 
we write 
\[
S(i_1,i_2,\dots,i_n)
=\Z_{m(1,i_1)}\otimes\Z_{m(2,i_2)}\otimes\dots\otimes\Z_{m(n,i_n)}
\subset H_0(\G)
\]
and 
\begin{align*}
&T_p(i_1,i_2,\dots,i_n)\\
&=\Z_{m(1,i_1)}\otimes\dots\otimes\Z_{m(p-1,i_{p-1})}\otimes
\Tor(\Z_{m(p,i_p)},\Z_{m(p+1,i_{p+1})}\otimes\dots\otimes\Z_{m(n,i_n)})\\
&\subset T_p. 
\end{align*}
Then one has 
\[
H_0(\G)=\bigoplus_{(i_1,i_2,\dots,i_n)\in J}S(i_1,i_2,\dots,i_n)
\]
and 
\[
T_p=\bigoplus_{(i_1,i_2,\dots,i_n)\in J}T_p(i_1,i_2,\dots,i_n). 
\]
Let $J_0\subset J$ be the subset consisting of $(i_1,i_2,\dots,i_n)$ 
such that $m(d,i_d)\in2\Z$ for all $d$ and 
$\#\{d\mid m(d,i_d)\in4\Z{+}2\}$ is less than three. 
Let $S_0\subset H_0(\G)$ be the subgroup 
\[
S_0=\bigoplus_{(i_1,i_2,\dots,i_n)\in J_0}S(i_1,i_2,\dots,i_n). 
\]
We have 
\begin{align*}
&\Ext\left(\bigoplus_{p=1}^{n-1}T_p,S_0\otimes\Z_2\right)\\
&=\bigoplus_{p=1}^{n-1}\bigoplus_{(i_1,i_2,\dots,i_n)\in J}
\bigoplus_{(i'_1,i'_2,\dots,i'_n)\in J_0}
\Ext\left(T_p(i_1,i_2,\dots,i_n),S(i'_1,i'_2,\dots,i'_n)\otimes\Z_2\right)
\end{align*}
and each summand 
$\Ext(T_p(i_1,\dots,i_n),S(i'_1,\dots,i'_n)\otimes\Z_2)$ is 
either $0$ or $\Z_2$. 
We let 
\[
e(A_1,A_2,\dots,A_n)
\in\Ext\left(\bigoplus_{p=1}^{n-1}T_p,S_0\otimes\Z_2\right)
\]
be the element whose 
$\Ext(T_p(i_1,\dots,i_n),S(i'_1,\dots,i'_n)\otimes\Z_2)$ summand is 
nontrivial if and only if 
\begin{itemize}
\item $i_d{=}i'_d$ for all $d$, 
\item $m(d,i_d)\in4\Z$ for all $d\leq p{-}1$, 
\item $m(p,i_p)\in4\Z+2$, 
\item $\#\{d\mid d>p,\ m(d,i_d)\in4\Z{+}2\}=1$. 
\end{itemize}

Making use of this notation, 
we can describe $[[\G]]_\ab$ in the following way. 

\begin{theorem}\label{prodSFTAbel}
Let $\G=\G_{A_1}\times\G_{A_2}\times\dots\times\G_{A_n}$ be 
a product groupoid of SFT groupoids. 
Let the notation be as above. 
Then 
\[
\begin{CD}
0@>>>S_0\otimes\Z_2@>j>>[[\G]]_\ab@>I>>H_1(\G)@>>>0
\end{CD}
\]
is exact. 
The quotient map $[[\G]]_\ab\to H_1(G)$ has a right inverse 
on the subgroup 
\[
\bigoplus_{q_1+q_2+\dots+q_n=1}
H_{q_1}(\G_{A_1})\otimes H_{q_2}(\G_{A_2})\otimes\dots
\otimes H_{q_n}(\G_{A_n})\subset H_1(\G). 
\]
The extension of the remaining part 
\[
\bigoplus_{p=1}^{n-1}
H_0(\G_{A_1})\otimes\dots\otimes H_0(\G_{A_{p-1}})
\otimes\Tor(H_0(\G_{A_p}),H_0(\G_{A_{p+1}}\times\dots\times \G_{A_n}))
\]
is given by the element $e(A_1,A_2,\dots,A_n)$ described above. 
\end{theorem}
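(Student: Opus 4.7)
The starting point is Theorem \ref{prodSFTAH}, which gives exactness of
\[
H_0(\G) \otimes \Z_2 \xrightarrow{j} [[\G]]_\ab \xrightarrow{I} H_1(\G) \longrightarrow 0.
\]
The plan is (i) to identify $\ker j$ as exactly the summands indexed by $J \setminus J_0$; (ii) to produce a right-inverse of $I$ on the "standard" part $\bigoplus_{\sum q_d = 1} \bigotimes_d H_{q_d}(\G_{A_d})$ so that only $\bigoplus_p T_p$ carries nontrivial extension data; and (iii) to read the extension class on the $\Tor$-summands directly from Lemma \ref{baker}~(4). The parity bookkeeping encoded in the definitions of $J_0$ and $e(A_1,\ldots,A_n)$ is a shadow of the fact that $g(g+1)/2$ is odd exactly when $g \equiv 1, 2 \pmod 4$, which is precisely the output of Lemma \ref{baker}~(4).

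\textbf{Computing $\ker j$.} The summand $S(i_1,\ldots,i_n) \otimes \Z_2$ is already zero when some $m(d,i_d)$ is odd, so the only new case is when all $m(d,i_d)$ are even but at least three lie in $4\Z+2$. I would adapt the baker's-map argument of Lemma \ref{tauiszero}: restrict to any three such coordinates $d_1, d_2, d_3$, absorb the remaining factors as $\odot\,\id_F$ for clopen $F$ representing the other tensor factors, and introduce baker's maps $\beta_{d_a, d_b}$ realizing the three pairwise Tor classes. Lemma \ref{baker}~(4) with $g \in 4\Z+2$ gives $(g-1)[\beta_{d_a,d_b}] = [\tau]$ in the abelianization, while the relation $\beta_{d_1,d_2}\beta_{d_2,d_3} = \beta_{d_1,d_3}$ (up to a commutator, by Lemma \ref{baker}~(3)) forces $2[\tau] = [\tau]$, whence $[\tau] = 0$.

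\textbf{Splitting the standard part and the extension.} For each $q = (q_1,\ldots,q_n)$ with $\sum q_d = 1$, let $d_0$ be the unique index with $q_{d_0} = 1$. Given $\alpha \otimes \bigotimes_{d \neq d_0} a_d$ in the corresponding summand, pick $\gamma \in [[\G_{A_{d_0}}]]$ with $I_{\G_{A_{d_0}}}(\gamma) = \alpha$ (available by the strong AH property of SFT groupoids, Theorem \ref{SFTproperty}~(3)) and clopen $F_d \subset X_{A_d}$ with $[1_{F_d}] = a_d$; send the generator to $[\gamma \odot \id_{\prod_{d \neq d_0} F_d}]$. Well-definedness of this right-inverse to $I$ follows from Lemma \ref{baker}~(1)--(2) and Theorem \ref{SFTproperty}~(3). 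For the $T_p$-summand attached to $(i_1,\ldots,i_n)$, realize a generator by a baker's map $\beta$ on $\G_{A_p} \times (\G_{A_{p+1}} \times \dots \times \G_{A_n})$ tensored with $\id$ on the first $p-1$ factors; Lemma \ref{baker}~(4) then gives the extension
\[
g[\beta] \;=\; j\!\left(\tfrac{g(g+1)}{2}\,(a \otimes b) \otimes \bar 1\right)
\]
in $[[\G]]_\ab$. Tracking which index tuples make the right-hand side survive in $S_0 \otimes \Z_2$ (the $m(d,i_d)$ with $d < p$ must be in $4\Z$ for the product not to vanish mod $2$, $m(p,i_p)$ must be in $4\Z+2$ to make $g/2$ odd in the Tor pairing, and exactly one later coordinate must be in $4\Z+2$ so that $g(g+1)/2$ is odd) reproduces the prescription defining $e(A_1,\ldots,A_n)$ term-by-term.

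\textbf{Injectivity on $S_0 \otimes \Z_2$ (the main obstacle).} Finally, show $j$ is injective on $S_0 \otimes \Z_2$ by detecting each summand with a character of $[[\G]]$. For fixed $(i_1,\ldots,i_n) \in J_0$, choose $k(d) \in \{3,5\}$ according to whether $m(d,i_d) \in 4\Z+2$ or $4\Z$, build a homomorphism $\phi_d: H_0(\G_{A_d}) \to \Z_{k(d)-1}$ nontrivial on the $i_d$-summand with $\phi_d(u_{A_d}) = u_{[k(d)]}$ (after replacing $\G_{A_d}$ by a Morita-equivalent SFT groupoid via Corollary \ref{SFTMorita} if needed so this normalization can be met), and apply Theorem \ref{embedding} to embed $[[\G]]$ into the topological full group of $\G_{[k(1)]} \times \dots \times \G_{[k(n)]}$. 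Pulling back Lemma \ref{tauisnonzero}'s Case~0, 1, or 2 character (depending on how many $k(d)$ equal $3$) gives the desired detector; the $\Z_4$-valued Case~2 character simultaneously certifies the nontriviality of the extension on the two-$4\Z+2$-coordinates piece of $\bigoplus_p T_p$. The delicate point, and what I expect to be the main obstacle, is simultaneously arranging the $\phi_d$'s for \emph{all} tuples in $J_0$ so that the unit normalization holds while the chosen summand is detected, and then assembling the resulting cocycle data into the element $e(A_1,\ldots,A_n)$ rather than a larger $\Ext$-class.
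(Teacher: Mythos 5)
Your overall architecture coincides with the paper's: exactness from Theorem \ref{prodSFTAH}; vanishing of $j$ on the summands outside $J_0$ via a three--coordinate baker's--map cancellation; injectivity on $S_0\otimes\Z_2$ by embedding $[[\G|Y]]$ into $[[\G_{[k_1]}\times\dots\times\G_{[k_n]}]]$ with $k_d\in\{3,5\}$ and pulling back the characters of Lemma \ref{tauisnonzero}; splitting on the $\sum q_d=1$ part via $\gamma\odot\id_Y$ and Lemma \ref{baker} (1)--(2); and the extension class on $T_p$ from Lemma \ref{baker} (4) followed by the mod--$4$ bookkeeping. (Your worry about ``simultaneously arranging the $\phi_d$'s for all tuples in $J_0$'' is not an actual obstacle: one uses a separate target $\H$ and a separate character for each tuple, and the paper's characters kill every $J_0$--summand except the chosen one, which suffices to separate.)

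The one place where your justification would not go through as written is the vanishing step. You run the argument of Lemma \ref{tauiszero} directly in the general product $\G_{A_{d_1}}\times\G_{A_{d_2}}\times\G_{A_{d_3}}$ and invoke Lemma \ref{baker} (3) to get $\beta_{d_1,d_2}\beta_{d_2,d_3}=\beta_{d_1,d_3}$ up to commutators. But Lemma \ref{baker} (3) only says that a baker's map with fixed data $(g,a,b)$ is unique up to conjugacy; it does not say that the composite of two baker's maps in overlapping pairs of coordinates is again a baker's map, and for generic choices of the defining $\G$--sets it is not. The identity does hold if you build $\beta_{d_1,d_2}$ and $\beta_{d_2,d_3}$ from a \emph{shared} family of $\G_{A_{d_2}}$--sets in the middle coordinate (then the composite is literally $\theta(U_i\times E_2\times W_i^{-1})$, i.e.\ $\beta_{d_1,d_3}$ tensored with the identity on the middle factor), so you must either add that compatibility to the construction and verify the identity by hand, or do what the paper does: use Theorem \ref{embedding} to map $[[\G_{[3]}\times\G_{[3]}\times\G_{[3]}\times(\G_{A_4}|Y_{4,i_4})\times\cdots]]$ into $[[\G|Y]]$ and quote Lemma \ref{tauiszero}, where the composition identity is checked explicitly for the full shift. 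A small related slip: Lemma \ref{baker} (4) and Lemma \ref{tauiszero} give $g[\beta]=[\tau]$ (with $g=k-1$ in the full--shift case), not $(g-1)[\beta]=[\tau]$; the cancellation $[\tau]=2[\tau]$ is unaffected.
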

\begin{proof}
Assume 
\[
H_0(\G_{A_d})\cong\bigoplus_{i=1}^{h(d)}\Z_{m(d,i)}
\]
as above. 
Let $a_{d,i}\in\Z_{m(d,i)}\subset H_0(\G_{A_d})$ be a generator. 
Let $Y_{d,i}\subset X_{A_d}$ be a proper nonempty clopen subset 
such that $[1_{Y_{d,i}}]=a_{d,i}$. 

First, we determine the kernel of 
the homomorphism $j:H_0(\G)\otimes\Z_2\to[[\G]]_\ab$. 
Let $(i_1,i_2,\dots,i_n)\in J\setminus J_0$. 
We would like to show that 
$S(i_1,\dots,i_n)\otimes\Z_2$ is contained in the kernel. 
If $m(d,i_d)$ is odd for some $d$, then $S(i_1,\dots,i_n)\otimes\Z_2=0$. 
Suppose that $m(d,i_d)$ is even for all $d$ and 
that $\#\{d\mid m(d,i_d)\in4\Z{+}2\}$ is not less than three. 
To simplify notation, we assume $m(d,i_d)\in4\Z{+}2$ for $d=1,2,3$. 
Let $\phi_d$ be a homomorphism $\phi_d:\Z_2\to H_0(\G_{A_d})$ such that 
\[
\phi_d(\bar{1})=\frac{m(d,i_d)}{2}a_{d,i}\in\Z_{m(d,i_d)}
\]
for $d=1,2,3$. 
Set 
\[
\H=\G_{[3]}\times\G_{[3]}\times\G_{[3]}
\times(\G_{A_4}|Y_{4,i_4})\times\dots\times(\G_{A_n}|Y_{n,i_n})
\]
and 
\[
Y=Y_{1,i_1}\times Y_{2,i_2}\times\dots\times Y_{n,i_n}\subset\G^{(0)}. 
\]
Applying Theorem \ref{embedding} to $\H$ and $\G|Y$, 
we get a unital homomorphism $\pi:C^*_r(\H)\to C^*_r(\G|Y)$. 
Let $\tau\in[[\G_{[3]}\times\G_{[3]}\times\G_{[3]}]]$ be a transposition 
corresponding to the nontrivial element of $\Z_2$. 
By Lemma \ref{tauiszero}, $\tau$ is in the commutator subgroup. 
Consider the transposition $\tau\times\id\in\Homeo(\H^{(0)})$, 
which is in $D([[\H]])$. 
The homomorphism $\pi$ induces an embedding $[[\H]]\to[[\G|Y]]$, 
which maps $\tau\times\id$ to a transposition corresponding to 
$a_{1,i_1}\otimes\dots\otimes a_{n,i_n}\otimes\bar{1}\in H_0(\G)\otimes\Z_2$. 
Therefore 
the image of $a_{1,i_1}\otimes\dots\otimes a_{n,i_n}\otimes\bar{1}$ by $j$ 
is zero in $[[\G]]_\ab$. 

Assume $(i_1,i_2,\dots,i_n)\in J_0$. 
For each $d\in\{1,2,\dots,n\}$, we define a homomorphism $\phi_d$ 
from $H_0(\G_{A_d})$ to a finite cyclic group. 
When $m(d,i_d)$ is in $4\Z$, we define $\phi_d:H_0(\G_{A_d})\to\Z_4$ by 
\[
\phi_d(a_{d,i})=\begin{cases}\bar{1}&i=i_d\\\bar{0}&i\neq i_d. \end{cases}
\]
When $m(d,i_d)$ is in $4\Z{+}2$, 
we define $\phi_d:H_0(\G_{A_d})\to\Z_2$ by the same formula. 
Set 
\[
\H=\G_{[k_1]}\times\G_{[k_2]}\times\dots\times\G_{[k_n]}, 
\]
where 
\[
k_d=\begin{cases}3&m(d,i_d)\in4\Z{+}2\\5&m(d,i_d)\in4\Z. \end{cases}
\]
Applying Theorem \ref{embedding} to $\G|Y$ and $\H$, 
we get a unital homomorphism $\pi:C^*_r(\G|Y)\to C^*_r(\H)$. 
Let $\tau\in[[\H]]$ be a transposition 
corresponding to the nontrivial element of $H_0(\H)\otimes\Z_2\cong\Z_2$. 
Since $\#\{d\mid m(d,i_d)\in4\Z{+}2\}$ is less than three, 
by Lemma \ref{tauisnonzero}, there exists a homomorphism $\rho$ 
from $[[\H]]_\ab$ to a finite cyclic group 
such that $\rho([\tau])$ is nonzero. 
The homomorphism $\pi$ induces an embedding $[[\G|Y]]\to[[\H]]$, 
and hence a homomorphism $\pi_*:[[\G|Y]]_\ab\to[[\H]]_\ab$. 
The composition $\rho\circ\pi_*$ satisfies 
\[
\rho(\pi_*(j(a_{1,l_1}\otimes\dots\otimes a_{n,l_n}\otimes\bar{1})))
\neq\bar{0}
\iff(l_1,l_2,\dots,l_n)=(i_1,i_2,\dots,i_n). 
\]
Hence we can conclude 
\[
\Ker j
=\bigoplus_{(i_1,i_2,\dots,i_n)\notin J_0}S(i_1,i_2,\dots,i_n)\otimes\Z_2, 
\]
which implies that 
\[
\begin{CD}
0@>>>S_0\otimes\Z_2@>j>>[[\G]]_\ab@>I>>H_0(\G)@>>>0
\end{CD}
\]
is exact. 

Next, let us determine the equivalence class of the extension above. 
Let $q_1+q_2+\dots+q_n=1$ and consider the subgroup 
\[
H_{q_1}(\G_{A_1})\otimes H_{q_2}(\G_{A_2})\otimes\dots
\otimes H_{q_n}(\G_{A_n})\subset H_1(\G). 
\]
We would like to show that 
the quotient map $[[\G]]_\ab\to H_1(\G)$ has a right inverse on this subgroup. 
To simplify the notation, 
we may assume $q_1=1$ and $q_2=q_3=\dots=q_n=0$. 
The group 
\[
H_1(\G_{A_1})\otimes H_0(\G_{A_2})\otimes H_0(\G_{A_3})\otimes\dots
\otimes H_0(\G_{A_n})
\]
is isomorphic to the direct sum of $S(i_1,i_2,\dots,i_n)$ 
for $(i_1,i_2,\dots,i_n)$ such that $m(1,i_1)=0$. 
If $m(d,i_d)=0$ for all $d$, then $S(i_1,i_2,\dots,i_n)$ is $\Z$, 
and so the exact sequence clearly splits on it. 
Suppose that $m(d,i_d)>0$ for some $d\geq2$. 
Let $g$ be the greatest common divisor of $\{m(d,i_d)\mid d\geq2\}$. 
Then $S(i_1,i_2,\dots,i_n)$ is isomorphic to $\Z_g$. 
Let $\gamma\in[[\G_{A_1}]]$ be an element 
whose index is equal to $a_{1,i_1}\in\Z_{m(1,i_1)}\subset H_1(\G_{A_1})$. 
Set 
\[
Y=Y_{2,i_2}\times Y_{3,i_3}\times\dots\times Y_{n,i_n}. 
\]
It follows that $\gamma\odot\id_Y\in[[\G]]$ 
(or its equivalence class in the abelianization) is a lift 
of the generator of $S(i_1,i_2,\dots,i_n)$. 
By Lemma \ref{baker} (1) and (2), 
$\gamma\odot\id_Y\in[[\G]]$ is of order $g$ in $[[\G]]_\ab$. 
Therefore, the exact sequence splits on $S(i_1,i_2,\dots,i_n)$. 

Lastly, 
let us consider the subgroup $T_p\subset H_1(\G)$ for $p=1,2,\dots,n{-}1$. 
Take $(i_1,i_2,\dots,i_n)\in J$. 
When $m(p,i_p)=0$ or $m(d,i_d)=0$ for all $d>p$, then 
$T_p(i_1,i_2,\dots,i_n)$ is zero. 
So, we may assume that $m(p,i_p)>0$ and $m(d,i_d)>0$ for some $d>p$. 
Let $g_0$ be the greatest common divisor of $\{m(d,i_d)\mid d>p\}$, 
so that $\Z_{m(p+1,i_{p+1})}\otimes\dots\otimes\Z_{m(n,i_n)}\cong\Z_{g_0}$. 
Let $g=\gcd(m(p,i_p),g_0)$ and let 
\[
a=\frac{m(p,i_p)}{g}a_{p,i_p},\quad 
b=\frac{g_0}{g}(a_{p+1,i_{p+1}}\otimes a_{p+2,i_{p+2}}\otimes\dots
\otimes a_{n,i_n}). 
\]
One has $ga=0$ and $gb=0$. 
Let $\beta\in[[\G_{A_p}\times\G_{A_{p+1}}\times\dots\times\G_{A_n}]]$ be 
the element associated with $g$, $a$ and $b$ 
(see the construction stated before Lemma \ref{baker}). 
Set 
\[
Y=Y_{1,i_1}\times Y_{2,i_2}\times\dots\times Y_{p-1,i_{p-1}}. 
\]
We can verify that $\id_Y\odot\beta\in[[\G]]$ is 
a lift of the generator of $T_p(i_1,i_2,\dots,i_n)$. 
Let $g_1$ be the greatest common divisor of $\{m(d,i_d)\mid d<p\}$ and 
let $g_2=\gcd(g_1,g)$. 
Then $T_p(i_1,i_2,\dots,i_n)$ is isomorphic to $\Z_{g_2}$. 
There exist $u,v\in\Z$ such that $g_2=ug_1+vg$. 
When $g_1>0$, 
$g_1[1_Y]$ is zero in $H_0(\G_{A_1}\times\dots\times\G_{A_{p-1}})$. 
It follows from Lemma \ref{baker} (1) and (2) that 
$g_1[\id_Y\odot\beta]$ is zero in $[[\G]]_\ab$. 
When $g_1$ is zero, $g_1[\id_Y\odot\beta]$ is clearly zero. 
Hence, we have 
\[
g_2[\id_Y\odot\beta]=ug_1[\id_Y\odot\beta]+vg[\id_Y\odot\beta]
=vg[\id_Y\odot\beta]
\]
in $[[\G]]_\ab$. 
This value is zero if and only if 
the extension splits on $T_p(i_1,i_2,\dots,i_n)$. 
By Lemma \ref{baker} (4), we get 
\begin{align*}
g[\id_Y\odot\beta]&=\frac{g(g+1)}{2}j([1_Y]\otimes a\otimes b\otimes\bar{1})\\
&=\frac{g(g+1)}{2}\frac{m(p,i_p)}{g}\frac{g_0}{g}
j(a_{1,i_1}\otimes a_{2,i_2}\otimes\dots\otimes a_{n,i_n}\otimes\bar{1}). 
\end{align*}
We have already shown that 
$j(a_{1,i_1}\otimes a_{2,i_2}\otimes\dots\otimes a_{n,i_n}\otimes\bar{1})
\neq0$ 
if and only if $(i_1,i_2,\dots,i_n)\in J_0$, i.e. 
\begin{itemize}
\item $m(d,i_d)\in2\Z$ for all $d$, 
\item $\#\{d\mid m(d,i_d)\in4\Z{+}2\}$ is less than three. 
\end{itemize}
Assume $(i_1,i_2,\dots,i_n)\in J_0$. 
If $m(p,i_p)\in4\Z$ and $g_0\in4\Z$, then $g$ is also in $4\Z$. 
It follows that $g(g+1)/2$ is even, which implies $g[\id_Y\odot\beta]=0$. 
If $m(p,i_p)\in4\Z$ and $g_0\in4\Z+2$, then $g$ is in $4\Z+2$. 
It follows that $m(p,i_p)/g$ is even, 
which again implies $g[\id_Y\odot\beta]=0$. 
In the same way, if $m(p,i_p)\in4\Z+2$ and $g_0\in4\Z$, 
then $g[\id_Y\odot\beta]=0$. 
Assume $m(p,i_p)\in4\Z+2$ and $g_0\in4\Z+2$. 
Then, $\#\{d\mid d>p,\ m(d,i_d)\in4\Z{+}2\}=1$ and 
$m(d,i_d)\in4\Z$ for every $d<p$. 
Also, $g$ is in $4\Z+2$, which implies 
\begin{align*}
g[\id_Y\odot\beta]&=\frac{g(g+1)}{2}\frac{m(p,i_p)}{g}\frac{g_0}{g}
j(a_{1,i_1}\otimes a_{2,i_2}\otimes\dots\otimes a_{n,i_n}\otimes\bar{1})\\
&=j(a_{1,i_1}\otimes a_{2,i_2}\otimes\dots\otimes a_{n,i_n}\otimes\bar{1}). 
\end{align*}
Moreover, one has $g_1\in4\Z$ and $g_2\in4\Z+2$. 
Therefore $v$ is odd. 
Consequently, we obtain 
\begin{align*}
g_2[\id_Y\odot\beta]=vg[\id_Y\odot\beta]
&=vj(a_{1,i_1}\otimes\dots\otimes a_{n,i_n}\otimes\bar{1})\\
&=j(a_{1,i_1}\otimes\dots\otimes a_{n,i_n}\otimes\bar{1})\neq0, 
\end{align*}
which completes the proof. 
\end{proof}

\begin{corollary}
Let $\G=\G_{A_1}\times\G_{A_2}\times\dots\times\G_{A_n}$ be 
a product groupoid of SFT groupoids. 
\begin{enumerate}
\item If $n=1$ or $n=2$, then $\G$ has the strong AH property. 
\item When $n\geq3$, $\G$ has the strong AH property if and only if 
the number of $d\in\{1,2,\dots,n\}$ such that $H_0(\G_{A_d})$ contains 
$\Z_2$ as a direct summand is less than three. 
\end{enumerate}
\end{corollary}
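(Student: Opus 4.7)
The plan is to derive this as a direct combinatorial consequence of Theorem \ref{prodSFTAbel}. That theorem gives the exact sequence
\[
\begin{CD}
0 @>>> S_0 \otimes \Z_2 @>j>> [[\G]]_\ab @>I>> H_1(\G) @>>> 0,
\end{CD}
\]
with $S_0 \subseteq H_0(\G)$ the explicit subgroup indexed by $J_0$, and the canonical homomorphism $H_0(\G) \otimes \Z_2 \to [[\G]]_\ab$ factors as $H_0(\G) \otimes \Z_2 \twoheadrightarrow S_0 \otimes \Z_2 \hookrightarrow [[\G]]_\ab$. Hence the strong AH property is equivalent to the equality of subgroups $S_0 \otimes \Z_2 = H_0(\G) \otimes \Z_2$.

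To make the arithmetic transparent I would fix, for each $d$, an elementary-divisor decomposition $H_0(\G_{A_d}) \cong \bigoplus_{i=1}^{h(d)} \Z_{m(d,i)}$ in which every $m(d,i)$ is either $0$ or a prime power. Under this convention $m(d,i) \in 4\Z+2$ forces $m(d,i) = 2$, and the existence of such an $i$ is exactly the condition that $\Z_2$ be a direct summand of $H_0(\G_{A_d})$. Decomposing both $H_0(\G) \otimes \Z_2$ and $S_0 \otimes \Z_2$ as direct sums indexed by $J$ and $J_0$, and using the fact that $S(i_1, \ldots, i_n) \otimes \Z_2$ vanishes precisely when some $m(d, i_d)$ is odd, one sees that the desired equality $S_0 \otimes \Z_2 = H_0(\G) \otimes \Z_2$ fails if and only if there is a tuple $(i_1, \ldots, i_n) \in J$ with every $m(d, i_d)$ even and $\#\{d : m(d, i_d) \in 4\Z+2\} \geq 3$.

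For part (1), this cannot happen when $n \leq 2$, so strong AH holds automatically. For part (2), with $n \geq 3$, constructing a bad tuple amounts to choosing three coordinates with $m(d, i_d) = 2$ (equivalently, $\Z_2$ a direct summand of the corresponding $H_0(\G_{A_d})$) and assigning any even $m(d, i_d)$ to each remaining coordinate. If some $H_0(\G_{A_d})$ admits no even $m(d, i)$, then $H_0(\G) \otimes \Z_2 = 0$ and strong AH is immediate; otherwise the remaining coordinates can always be filled in, and existence of a bad tuple is equivalent to the condition that at least three of the $H_0(\G_{A_d})$ have $\Z_2$ as a direct summand.

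The whole argument is combinatorial bookkeeping once Theorem \ref{prodSFTAbel} is granted. The main subtlety is simply aligning the arithmetic condition $m(d,i) \in 4\Z + 2$ appearing in the definition of $J_0$ with the invariantly defined condition that $\Z_2$ be a direct summand of $H_0(\G_{A_d})$, which is achieved by passing to the elementary-divisor decomposition so that the $4\Z+2$ criterion forces the corresponding cyclic factor to be precisely $\Z_2$.
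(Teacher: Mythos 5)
Your derivation follows the route the paper intends: the corollary is stated without proof as an immediate consequence of Theorem \ref{prodSFTAbel}, and its content is exactly the combinatorial translation you carry out. The reduction of the strong AH property to the equality $S_0\otimes\Z_2=H_0(\G)\otimes\Z_2$, the observation that $S(i_1,\dots,i_n)\otimes\Z_2$ vanishes precisely when some $m(d,i_d)$ is odd, and the passage to an elementary-divisor decomposition so that the condition $m(d,i)\in4\Z+2$ becomes the invariant condition that $\Z_2$ is a direct summand of $H_0(\G_{A_d})$, are all correct and are the essential points.

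There is, however, one issue you raise and then leave unresolved, and it matters for the literal statement of part (2). Your own case analysis shows that a bad tuple exists if and only if \emph{both} (a) every $H_0(\G_{A_d})$ has some summand $\Z_{m(d,i)}$ with $m(d,i)\in2\Z$ (equivalently $H_0(\G_{A_d})\otimes\Z_2\neq0$) \emph{and} (b) at least three of the $H_0(\G_{A_d})$ have $\Z_2$ as a direct summand. Condition (a) is not implied by (b) and does not appear in the corollary. Concretely, for $\G=\G_{[3]}\times\G_{[3]}\times\G_{[3]}\times\G_{[4]}$ one has $H_0(\G)\cong\Z_2\otimes\Z_2\otimes\Z_2\otimes\Z_3=0$, so $j$ is vacuously injective and the strong AH property holds, even though three of the factors have $\Z_2$ as a direct summand; the ``only if'' direction of part (2) as printed fails in this degenerate situation. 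Your write-up correctly notes that strong AH is immediate when some $H_0(\G_{A_d})\otimes\Z_2$ vanishes, but then asserts the claimed equivalence only ``otherwise'', so what you have actually proved is a corrected statement rather than the corollary verbatim. You should make this explicit: either add the hypothesis that $H_0(\G_{A_d})\otimes\Z_2\neq0$ for every $d$ to the ``only if'' direction, or record that the corollary must be read with this caveat. Apart from this, the bookkeeping is complete: part (1) and the ``if'' direction of part (2) are fine as you argue them.
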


\end{document}